\tikzstyle directed=[postaction={decorate,decoration={markings,
    mark=at position .65 with {\arrow{latex}}}}]
\newtheorem{theorem}{Theorem}[section]
\newtheorem{corollary}[theorem]{Corollary}
\newtheorem{lemma}[theorem]{Lemma}
\newtheorem{proposition}[theorem]{Proposition}
\theoremstyle{definition}
\newtheorem{remark}[theorem]{Remark}
\newtheorem{example}[theorem]{Example}
\newtheorem{notation}[theorem]{Notation}
\newcommand\Sub{\mathrm{Sub}}
\newcommand\Stab{\mathrm{Stab}}
\newcommand\Ph{\mathrm{Ph}}
\newcommand\dom{\mathrm{dom}}
\newcommand\rng{\mathrm{rng}}
\newcommand\trg{\mathbf{t}}
\newcommand\src{\mathbf{s}}
\newcommand\BS{\mathrm{BS}}
\newcommand\Sch{\mathrm{Sch}}
\newcommand\id{\mathrm{id}}
\newcommand\Supp{\mathrm{Supp}}
\newcommand\Cay{\mathrm{Cay}}
\title{Subgroup mixing in Baumslag-Solitar groups}
\author{Sasha Bontemps}
\date{\today}
\newenvironment{acknowledgements}{%
	% Rename Abstract to Acknowledgements
	\begin{abstract}
	}{%
	\end{abstract}
}
\begin{document}
	
	\maketitle
	
	\begin{abstract}
	In this article, we contribute to the study of the dynamics by conjugation on the space of subgroups of Baumslag-Solitar groups $\BS(m,n)$, via the mixing properties of elements asymptotically produced by suitable random walks on the group.
In an acylindrically hyperbolic context, the authors of \cite{osin} demonstrated strong mixing situations, namely topological $\mu$-mixing, a strengthening of high topological transitivity.
Regarding non-metabelian $\BS(m,n)$ with $|m|\neq |n|$, we exhibit here a radically different situation on each of the pieces except one of the partition introduced in \cite{solitar} (although it is highly topologically transitive on each piece). On the other hand, when $|m|= |n|$, we demonstrate the topological $\mu$-mixing character of the action on each of the pieces. 
	\end{abstract}
	
	\section{Introduction}

	Given a couple $(m,n) \in \mathbb{Z}^*$, the Baumslag-Solitar group of parameter $(m,n)$ is the group defined by the following presentation \begin{equation}\label{baumsol}\BS(m,n) = \langle b, t \mid tb^mt^{-1} = b^n \rangle.\end{equation}
	Baumslag-Solitar groups were introduced in \cite{tworelators} to give the first examples of two-generated, finitely presented non-Hopfian groups. They have been widely studied in relation to various properties, that strongly depend on the parameters $(m,n)$: residual finiteness (\cite{meskin}), classification up to quasi-isometry (see \cite{farb} and \cite{whyte}), classification up to measure equivalence (announced by the authors of \cite{measureeq})...The group $\BS(m,n)$ acts on its \textbf{Bass-Serre tree} $\mathcal{T}_{m,n}$ (\textit{i.e.} the infinite oriented tree all of whose vertices have $m$ incoming edges and $n$ outgoing edges), with a single orbit of vertices and a single orbit of edges, and the vertex and edge stabilizers are infinite cyclic.  
	
	In this article, we pursue the study of the space of subgroups of Baumslag-Solitar groups, which was initiated in \cite{solitar}. Endowed with the Chabauty topolology, the set of subgroups $\Sub(\Gamma)$ of any infinite countable group $\Gamma$ is a closed subset of the Cantor space $\{0,1\}^{\Gamma}$. A particular subset of $\Sub(\Gamma)$ that we are interested in is the \textbf{perfect kernel} $\mathcal{K}(\Gamma)$ of $\Gamma$, \textit{i.e.} the largest closed subset without isolated point. This subset is invariant under $\Gamma$-conjugation. We are interested in the dynamics induced by this action. More precisely, we are interested in finding subsets of the perfect kernel on which the action is highly topologically transitive, or even topologically $\mu$-mixing. Recall that an action of a group $\Gamma$ on a topological space $X$ is \textbf{highly topologically transitive} (HTT) if for every $r \in \mathbb{N}$ and for every non-empty open subsets $U_1,...,U_r$, $V_1,...,V_r$, there exists $g \in \Gamma$ such that $g \cdot U_i \cap V_i \neq \emptyset$ for every $i \in \llbracket 1,r \rrbracket$. The authors of \cite{osin} introduced a strenghtening of this notion, called topological $\mu$-mixing. Given a probability measure $\mu$ on a countable group $\Gamma$, an action of $\Gamma$ on a topological space $X$ is called \textbf{topologically $\mu$-mixing} if for every non-empty open subsets $U,V \subseteq X$, denoting by $(S_k)_{k \in \mathbb{N}}$ a sequence of independently $\mu$-distributed random variables and by $(G_k)_{k \in \mathbb{N}}=(S_1 \cdots S_k)_{k \in \mathbb{N}}$ the random walk on $\Gamma$ with step distribution $\mu$, one has \[\lim_{k \to \infty}\mathbb{P}\left(G_k \cdot U \cap V \neq \emptyset\right) = 1.\] 
	
	The authors of \cite{AG} and \cite{osin} studied independently the space $\Sub(\Gamma)$ in the case where $\Gamma$ acts on a hyperbolic space with "vanishing" stabilizers. More formally, in the case where the hyperbolic space is a tree $\mathcal{T}$ (and the action is minimal and irreducible), one application of their results we are interested in is the case where the action of $\Gamma$ on $\mathcal{T}$ is \textbf{acylindrical}, \textit{i.e.} there exists $R > 0$ such that the stabilizer of any path of length larger than $R$ is trivial. The following statements are applications of their results to this particular setting. In \cite{AG}, the authors proved that in this case, the perfect kernel of $\Gamma$ contains the closure of the set $\Sub_{| \bullet \backslash \mathcal{T}|_{\infty}}(\Gamma)$ of subgroups $\Lambda$ of $\Gamma$ satisfying that the quotient graph $\Lambda \backslash \mathcal{T}$ is infinite: \[\overline{\Sub_{| \bullet \backslash \mathcal{T}|_{\infty}}(\Gamma)} \subseteq \mathcal{K}(\Gamma).\] Moreover, this subset is invariant under conjugation and the action of $\Gamma$ on $\overline{\Sub_{| \bullet \backslash \mathcal{T}|_{\infty}}(\Gamma)}$ is HTT. Still in the particular context of acylindrical actions on trees, the results of \cite{osin} lead to the study of the set $\Sub_{\infty}^{cc}(\Gamma \curvearrowright \mathcal{T})$, namely the set of infinite index $\mathcal{T}$-convex cocompact subgroups. Recall that a subgroup $\Lambda \leq \Gamma$ is called \textbf{$\mathcal{T}$-convex cocompact} if it acts properly on $\mathcal{T}$ (\textit{i.e.} with finite vertex stabilizers), with quasi-convex orbits (\textit{i.e.} for any vertex $v \in \mathcal{T}$, there exists $\eta > 0$ such that the reduced edge path connecting two vertices of $\Lambda \cdot v$ remains at distance $< \eta$ from $\Lambda \cdot v$). In the setting of actions on simplicial trees, this subset is exactly the set of finitely generated subgroups of infinite index that act properly on $\mathcal{T}$. If we assume moreover that the action is acylindrical, this is a subset of $\Sub_{|\bullet \backslash \mathcal{T}|_{\infty}}(\Gamma)$: \[\Sub_{\infty}^{cc}(\Gamma \curvearrowright \mathcal{T}) \subseteq \Sub_{|\bullet \backslash \mathcal{T}|_{\infty}}(\Gamma). \] On the closure of this subset, they proved that the action by conjugation is even topologically $\mu$-mixing for every measure $\mu$ on $\Gamma$ whose support is bounded (\textit{i.e.} $\{g \cdot v, g \in \Supp(\mu)\}$ is finite for any vertex $v$ of $\mathcal{T}$), symmetric (\textit{i.e.} stable under inversion), and generates $\Gamma$.
	
	A Baumslag-Solitar group $\BS(m,n)$ is a typical example whose action on its Bass-Serre tree $\mathcal{T}_{m,n}$ is \textit{not} acylindrical, because the stabilizer of every finite edge path is infinite cyclic. The authors of \cite{solitar} and \cite{solitar2} proved that this leads to a very different situation for the dynamics induced by the action by conjugation on the perfect kernel. In the case where $\min(|m|,|n|) > 1$, they proved that the perfect kernel exactly consists of the set $\Sub_{| \bullet \backslash \mathcal{T}_{m,n}|_{\infty}}(\BS(m,n))$, and they uncovered a countably infinite invariant partition of the perfect kernel $\mathcal{K}(\BS(m,n)) = \bigsqcup_{P \in \mathcal{Q}_{m,n}}\mathcal{K}_P$ (where $\mathcal{Q}_{m,n}$ is an infinite subset of $\mathbb{N}^* \sqcup \{\infty\}$ that contains $\infty$) such that \begin{itemize}
		\item $\mathcal{K}_P$ is open for every finite $P \in \mathcal{Q}_{m,n}$ (and also closed iff $|m|=|n|$);
		\item $\mathcal{K}_{\infty}$ is closed;
		\item the action by conjugation on $\mathcal{K}_P$ is HTT for every $P \in \mathcal{Q}_{m,n}$.
	\end{itemize}
	Notice that the existence of disjoint invariant open subsets prevents the action on $\mathcal{K}(\BS(m,n))$ from being HTT. However, the last item may make us wonder if the action is also topologically $\mu$-mixing on each piece. Our first result shows that this is false in general: \begin{theorem}\label{nonmixing}
		Let $m,n \in \mathbb{Z}$ such that $\min(|m|,|n|) > 1$. Let us assume that $|m| \neq |n|$. Then, there exists a probability measure $\mu$ whose support is finite, symmetric and generates $\BS(m,n)$, such that for every finite $P \in \mathcal{Q}_{m,n}$, the action by conjugation of $\BS(m,n)$ on $\mathcal{K}_P$ is not topologically $\mu$-mixing.
	\end{theorem}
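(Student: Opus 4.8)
The plan is to exploit the non-unimodularity of $\BS(m,n)$ hidden in the condition $|m|\neq|n|$ to produce, for a single well-chosen pair of non-empty open sets $U,V\subseteq\mathcal{K}_P$, a probability that fails to tend to $1$. I would take $\mu$ to be the uniform measure on the symmetric generating set $\{b^{\pm1},t^{\pm1}\}$; in fact the only features I will use are that $\mu$ is symmetric, finitely supported and generating, and that the image of the random walk under the height homomorphism is a spreading walk on $\mathbb{Z}$. Writing $c\colon\BS(m,n)\to\mathbb{Z}$ for the homomorphism with $c(b)=0$, $c(t)=1$ (the unique homomorphism to $\mathbb{Z}$ up to scaling, recording the height in $\mathcal{T}_{m,n}$), the elementary probabilistic input is that, since $\mu$ is symmetric, $c(S_k)$ is a centered, aperiodic, recurrent random walk on $\mathbb{Z}$, so by the local limit theorem $\sup_{j\in\mathbb{Z}}\mathbb{P}(c(S_k)=j)=O(k^{-1/2})$ and hence $\mathbb{P}(c(S_k)\in A)\to 0$ for every fixed finite $A\subseteq\mathbb{Z}$.

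Next I would translate the mixing event into a condition on $c(S_k)$. Fix a base vertex $v_0$ of $\mathcal{T}_{m,n}$ with $\Stab(v_0)=\langle b\rangle$, and take $V=V_a:=\{\Lambda:\ \Lambda\cap\langle b\rangle=\langle b^{a}\rangle\}$ for a suitable $a$, which is open as the intersection of the open conditions $b^{a}\in\Lambda$ and $b^{j}\notin\Lambda$ for $0<j<a$. Since $g\Lambda g^{-1}$ contains $\gamma$ if and only if $\Lambda$ contains $g^{-1}\gamma g$, membership $S_k\Lambda S_k^{-1}\in V_a$ is equivalent to the vertex stabiliser of $\Lambda$ at the vertex $w_k:=S_k^{-1}v_0$ being exactly the index-$a$ subgroup, i.e.\ a prescribed value of the stabiliser-index profile $v\mapsto[\Stab(v):\Lambda\cap\Stab(v)]$. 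The geometric heart of the matter is that $w_k$ sits at height $c(S_k^{-1}v_0)=-c(S_k)$, and that when $|m|\neq|n|$ this profile genuinely drifts: crossing an edge of $\mathcal{T}_{m,n}$ multiplies the relevant index by a factor lying in $\tfrac{|m|}{|n|}\cdot\{\,$divisors of $|m|$ and $|n|\,\}$, which cannot remain balanced as one moves toward an end. The aim is to choose $U$ and $a$ so that, uniformly over $\Lambda\in U$, the prescribed index value $a$ occurs only on a bounded band of heights; then $S_k\Lambda S_k^{-1}\in V_a$ forces $-c(S_k)$ into that band, i.e.\ $c(S_k)\in A$ for a fixed finite set $A$.

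Granting this reduction, the conclusion is immediate: $\mathbb{P}(S_k\cdot U\cap V\neq\emptyset)\le\mathbb{P}(c(S_k)\in A)\to 0\neq 1$, so the action on $\mathcal{K}_P$ is not topologically $\mu$-mixing. I would then check consistency with the high topological transitivity of the same action, which is what makes this phenomenon delicate rather than contradictory: high transitivity only requires the matching set $\{g:\ g\cdot U\cap V\neq\emptyset\}$ to be \emph{non-empty} for each finite configuration, and the bands produced above are centred at height $0$ (they contain the undrifted configuration), so finite intersections of matching sets remain non-empty and are realised by explicit conjugators using the horizontal freedom of the tree. What fails is purely probabilistic: the symmetric walk spreads in height like $\sqrt{k}$ and therefore leaves every bounded band of heights. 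This is exactly the mechanism that collapses when $|m|=|n|$: there the profile is flat, the value $a$ recurs at every height, $\mathcal{K}_P$ is moreover clopen, and matching becomes generic along the walk, in agreement with the $\mu$-mixing statement announced in that case.

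The step I expect to be the main obstacle is the uniform geometric claim of the second paragraph, namely that for an appropriate $U$ one can force a prescribed value of the stabiliser-index profile to occur only within a bounded band of heights, with the band independent of $\Lambda\in U$. This requires a precise description, for subgroups in the open piece $\mathcal{K}_P$, of how the index profile evolves along rays of $\mathcal{T}_{m,n}$, and an identification of the frontier $\overline{\mathcal{K}_P}\setminus\mathcal{K}_P$ (non-empty precisely because $\mathcal{K}_P$ fails to be closed when $|m|\neq|n|$) as the degenerate locus toward which the conjugates are driven. All remaining ingredients—the choice of $\mu$, the passage through conjugation to a condition on vertex stabilisers, and the local limit theorem—are routine once this drift estimate is in place.
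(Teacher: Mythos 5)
There is a genuine gap, and it sits exactly where you predicted. Your plan hinges on the uniform geometric claim that, for suitable non-empty open $U$ and $V=V_a$, the matching event $S_k\cdot U\cap V\neq\emptyset$ forces $c(S_k)$ into a fixed finite band. This is not just unproven but fails. First, the index $[\langle b\rangle:S_k\Lambda S_k^{-1}\cap\langle b\rangle]$ is not a function of the height $c(S_k)$: the $p$-adic valuation of the label changes by $\pm(|m|_p-|n|_p)$ per $t^{\pm1}$ only while it stays above $\max(|m|_p,|n|_p)$ (this is the content of the paper's Proposition 3.1); once it bottoms out, the Transfer Equation no longer determines it, and its later values depend on the part of the $(m,n)$-graph of $\Lambda$ lying outside the finite ball that $U$ prescribes --- precisely the part over which the existential quantifier in ``$\exists\Lambda\in U$'' ranges freely, and which the walk a.s.\ reaches. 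Second, a given label can legitimately recur at vertices of unbounded height in both directions within a fixed finite-phenotype piece (e.g.\ the label $1$ in the phenotype-$1$ piece of $\BS(2,3)$ chains indefinitely), so no bounded band of heights captures the locus where the index equals $a$. A centered walk makes deep excursions and returns, so the index at time $k$ cannot be read off from $c(S_k)$, and your local-limit-theorem input has nothing to bite on.

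The paper avoids all of this by \emph{not} taking a symmetric measure: it only needs the \emph{support} to be symmetric, and it chooses $\mu(t)\neq\mu(t^{-1})$ (this is flagged explicitly right after the theorem statement). Then $c(S_k)=\mathfrak h_k^+-\mathfrak h_k^-$ is a \emph{biased} walk on $\mathbb Z$, which with probability exactly $\mu(t)-\mu(t^{-1})>0$ stays positive forever and drifts linearly; on that event the hypothesis of Proposition 3.1 is maintained at every step, the $p$-adic valuation of the index tends to infinity, and the conjugates of every $\Lambda\in U_N$ leave $U_M$ permanently. This yields $\limsup_k\mathbb P(S_k\cdot U\cap V\neq\emptyset)\leq 1-(\mu(t)-\mu(t^{-1}))<1$, which is all the theorem needs --- not convergence to $0$. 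Your proposal, by insisting on a genuinely symmetric $\mu$, is attempting a strictly stronger statement that the paper deliberately does not claim and that your argument does not establish; to repair it along the paper's lines, replace the uniform measure by one with $\mu(t)>\mu(t^{-1})$ (when $|m|_p>|n|_p$) and replace the local limit theorem by the two elementary facts about biased walks in the paper's Proposition 3.2.
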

	Notice that, in the proof of Theorem \ref{nonmixing}, though the support of $\mu$ is symmetric, we will construct $\mu$ in such a way that $\mu(t) \neq \mu(t^{-1})$. 
	
	However, we have the following positive result: \begin{theorem}\label{mixing}
		Let $m,n \in \mathbb{Z}$ such that $\min(|m|, |n|) > 1$ and let $\mathcal{T}_{m,n}$ be the Bass-Serre tree of $\BS(m,n)$. Let $\mu$ be a probability measure on $\BS(m,n)$ whose support is bounded, symmetric, and generates $\BS(m,n)$. Then: \begin{enumerate}
			\item the action by conjugation of $\BS(m,n)$ on $\mathcal{K}_{\infty}$ is topologically $\mu$-mixing;
			\item if $|m|=|n|$, then for every $P \in \mathcal{Q}_{m,n}$, the action by conjugation of $\BS(m,n)$ on $\mathcal{K}_P$ is topologically $\mu$-mixing. 
		\end{enumerate}
	\end{theorem}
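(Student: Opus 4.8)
The plan is to prove topological $\mu$-mixing by decoupling it into a deterministic \emph{grafting} statement and a probabilistic \emph{genericity} statement, in the spirit of \cite{osin} but adapted to the non-acylindrical action $\BS(m,n)\curvearrowright\mathcal{T}_{m,n}$. Fix non-empty open subsets $U,V$ of the piece under consideration. Since $\Sub(\BS(m,n))\subseteq\{0,1\}^{\BS(m,n)}$ carries the Chabauty topology, I would first shrink $U$ and $V$ to basic open sets, each cut out by a finite window: $\Lambda\in U$ amounts to containing a prescribed finite set of elements and avoiding another finite set, which by the structure theory of \cite{solitar} is equivalent to prescribing the labelled quotient $\Lambda\backslash\mathcal{T}_{m,n}$ (together with its vertical-stabiliser index data) on a finite ball $B(v_0,R)$. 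The target is then to produce a \emph{good set} $\mathcal{G}=\mathcal{G}_{U,V}\subseteq\BS(m,n)$ with $g\cdot U\cap V\neq\emptyset$ for every $g\in\mathcal{G}$, and to show $\mathbb{P}(G_1\cdots G_k\in\mathcal{G})\to1$; the latter immediately gives $\mathbb{P}(G_1\cdots G_k\cdot U\cap V\neq\emptyset)\to1$, i.e.\ topological $\mu$-mixing.

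For the grafting, I would let $g$ be loxodromic on $\mathcal{T}_{m,n}$ with translation length $\gg R$ and with its attracting and repelling ends in general position relative to the finitely many directions recorded by the windows of $U$ and $V$. Fixing $\Lambda_U\in U$ and $\Lambda_V\in V$, I would assemble a single subgroup $\Lambda$ whose labelled quotient coincides with that of $\Lambda_U$ on the repelling-side ball and such that the labelled quotient of $g\Lambda g^{-1}$ coincides with that of $\Lambda_V$ on the attracting-side ball; concretely, this is a surgery gluing the local picture of $\Lambda_U$ to a $g$-translated copy of the local picture of $\Lambda_V$ along the axis of $g$. If this surgery yields a genuine subgroup in the correct piece, then $\Lambda\in U$ while $g\Lambda g^{-1}\in V$, so $g\Lambda g^{-1}\in g\cdot U\cap V$, and $g\in\mathcal{G}$. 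One also checks that $\Lambda\backslash\mathcal{T}_{m,n}$ is infinite, placing $\Lambda$ in the perfect kernel.

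The main obstacle — and exactly where the hypotheses enter — is the admissibility of this surgery in the correct piece. In $\BS(m,n)$ the edge stabilisers are infinite cyclic, and conjugation by $t$ rescales the vertical stabiliser by the modular factor, a power of $|m/n|$; hence transporting the local data of $\Lambda_V$ across the axis multiplies the relevant index by $|m/n|^{h(g)}$, where $h\colon\BS(m,n)\to\mathbb{Z}$, $b\mapsto0$, $t\mapsto1$, is the height homomorphism. When $|m|\neq|n|$ and $N$ is finite this generically forces a mismatch between the finite index $N$ imposed on the repelling side by $\Lambda_U$ and the index $N\cdot|m/n|^{h(g)}$ inherited on the attracting side, which is precisely the obstruction behind Theorem \ref{nonmixing}: choosing $\mu$ with $\mu(t)\neq\mu(t^{-1})$ makes $h(S_k)$ a drifting random walk, so the compatible case $h(g)=0$ has vanishing probability. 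Under the present hypotheses the obstruction disappears. In case (1) the vertical stabiliser of $\Lambda\in\mathcal{K}_\infty$ has infinite index, and an infinite index is invariant under every power of the modular factor, so the graft is always admissible. In case (2) the assumption $|m|=|n|$ makes $|m/n|=1$, the index is preserved along the axis, and the graft lands in $\mathcal{K}_N$ for every finite $N$ as well. In both cases I would still need to verify that general position of the axis can be imposed simultaneously with index-matching and with the infiniteness of $\Lambda\backslash\mathcal{T}_{m,n}$.

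Finally, the probabilistic input: since $\mu$ is symmetric, bounded and generating, $\BS(m,n)$ is non-amenable (as $\min(|m|,|n|)>1$) and the induced random walk on $\mathcal{T}_{m,n}$ has positive drift, converging almost surely to a boundary point, with the reversed walk converging as well by symmetry. Standard boundary arguments then show that the defining conditions of $\mathcal{G}$ — that $S_k=G_1\cdots G_k$ is loxodromic, has translation length exceeding the fixed $R$, and has axis in general position with the finite windows of $U$ and $V$ — hold with probability tending to $1$, the general-position clause using that the hitting measure on $\partial\mathcal{T}_{m,n}$ is non-atomic so that the finitely many forbidden directions are avoided asymptotically almost surely. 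Crucially, in cases (1) and (2) no constraint on $h(S_k)$ is needed, so $\mathcal{G}$ has full asymptotic probability and the argument closes; I expect the most delicate verifications to be the precise bookkeeping of the labelled-graph surgery and the simultaneous control of the axis position, rather than the coarse random-walk asymptotics.
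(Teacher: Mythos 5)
Your overall architecture --- shrink $U,V$ to finite windows on the quotient $(m,n)$-graphs, graft the two local pictures along a long path labelled by the random element, and identify the modular/phenotype obstruction that disappears exactly when the phenotype is infinite or $|m|=|n|$ --- is the same as the paper's, and your diagnosis of where the hypotheses of (1) and (2) enter (and of how Theorem \ref{nonmixing} arises when they fail) is correct. The gap is in the genericity step. You define the good set by asking that $S_k$ be loxodromic with large translation length and axis ``in general position with the finite windows'', and you dispose of the general-position clause by invoking non-atomicity of the hitting measure on $\partial\mathcal{T}_{m,n}$ against ``finitely many forbidden directions''. But the forbidden set is not finite. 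For the surgery to be admissible, the geodesic from $v_0$ to $S_k\cdot v_0$ must leave the $\Lambda_U$-invariant subtree $\mathcal{T}_{m,n}^{\Lambda_U}$ lying over the window of $U$ and never re-enter it (and dually for $\Lambda_V$ and the reversed walk); otherwise the prescribed local data of $\Lambda_U$ conflicts with the path you want to impose. The set of ends to be avoided is therefore $\partial\mathcal{T}_{m,n}^{\Lambda_U}$, which is in general an uncountable limit set (a Cantor set already when $\Lambda_U$ acts cocompactly on a thick invariant subtree), so non-atomicity of the harmonic measure gives nothing. What is actually needed is that the harmonic measure of $\partial\mathcal{T}_{m,n}^{\Lambda_U}$ vanishes, equivalently that the projected walk on the quotient $(m,n)$-graph converges to an end of the quotient; this is Proposition \ref{convloin} and Corollary \ref{convloinquotient}, and it is the technical heart of the paper.

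That step is delicate for a reason your sketch does not confront: since vertex stabilizers for $\BS(m,n)\curvearrowright\mathcal{T}_{m,n}$ are infinite, the sequence $(S_k\cdot v)_k$ is \emph{not} a Markov chain --- $S_{k+1}\cdot v$ depends on the group element $S_k$, not just on the vertex $S_k\cdot v$ --- so the ``standard boundary argument'' cannot be run on the tree directly. The paper has to condition on the realization of the walk in the group (Lemmas \ref{sort}, \ref{lem: never return}, \ref{lem: half tree} and \ref{lem: borel cantelli}), extract a uniform positive probability of never returning once the walk has left an $L$-neighbourhood of the invariant subtree, and conclude by a stopping-time/geometric-decay argument. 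Relatedly, your good set is a set of single group elements, whereas the admissibility of the paste is a condition on the normal form of $S_k$ relative to \emph{both} windows simultaneously (hypotheses (1)--(3) of Lemma \ref{merge}), which the paper verifies through trajectory events for the forward and reversed walks rather than through loxodromicity and translation length alone. Until the escape statement is proved, your argument does not close.
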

	Notice that, for every $(m,n) \in \mathbb{Z}^2$ such that $\min(|m|,|n|) > 1$, the set $\mathcal{K}_{\infty}$ exactly consists of $\overline{\Sub_{\infty}^{cc}(\BS(m,n) \curvearrowright \mathcal{T}_{m,n})}$. Thus we extend the aforementioned result of \cite{osin} to this particular case of a non-acylindrical action. 
	
	Given a group $\Gamma$ acting on a tree $\mathcal{T}$, the following array summarizes the results we mentioned: \\
	\\
	\begin{tabular}{c|c|c|c}
		& {\small $\Gamma \curvearrowright \mathcal{T}$ acylindrical} & \multicolumn{2}{l}{{\small $\BS(m,n) \curvearrowright \mathcal{T}_{m,n}$ ($|m|, |n| \geq 2$) not acylindrical}} \\
		&   & {\small Negative results} & {\small Positive results} \\
		\hline
		& & & \\
		{\small HTT} & {\small $\Gamma \curvearrowright \overline{\Sub_{| \bullet \backslash \mathcal{T}|_{\infty}}(\Gamma)}$ HTT} &  {\small $\BS(m,n) \curvearrowright \mathcal{K}(\BS(m,n))$} & {\small $\BS(m,n) \curvearrowright \mathcal{K}_P$ HTT} \\
		& {\small \cite{AG}} & {\small $= \Sub_{| \bullet \backslash \mathcal{T}_{m,n}|_{\infty}}(\BS(m,n))$} & {\small $ \forall P \in \mathcal{Q}_{m,n}$.} \\
		&  & {\small not HTT} & {\small \cite{solitar2}} \\
		& & {\small \cite{solitar}} & \\
		\hline 
		& & & \\
		{\small $\mu$-mixing} & {\small $\Gamma \curvearrowright \overline{\Sub^{cc}_{[\infty]}(\Gamma \curvearrowright \mathcal{T})}$} & {\small $\BS(m,n) \curvearrowright \mathcal{K}_P$} & {\small $\BS(m,\pm m) \curvearrowright \mathcal{K}_P$ $\mu$-mixing} \\
		& {\small $\subseteq \overline{\Sub_{| \bullet \backslash \mathcal{T}|_{\infty}}(\Gamma)}$ $\mu$-mixing.} & {\small not $\mu$-mixing} & {\small $\forall P \in \mathcal{Q}_{m,n}\cap \mathbb{N}^*$;} \\
		& {\small \cite{osin}} & {\small in general if $|m| \neq |n|$.} & {\small $\BS(m,n) \curvearrowright \mathcal{K}_{\infty}$} \\
		&  & {\small Theorem \ref{nonmixing}} & {\small $ = \overline{\Sub^{cc}_{\infty}(\BS(m,n) \curvearrowright \mathcal{T}_{m,n})}$} \\
		& & & {\small also $\mu$-mixing $\forall m,n$.} \\
		&  &  & {\small Theorem \ref{mixing}}
		
	\end{tabular} 
 \\
	\\
	
	The paper is organized as follows. First we recall some background around Baumslag-Solitar groups and Bass-Serre theory. We recall the main tools and the decomposition of the perfect kernel of $\BS(m,n)$ introduced in \cite{solitar}. Then, we build a measure $\mu$ supported on $\{b,b^{-1},t,t^{-1}\}$ such that $\mu(t) \neq \mu(t^{-1})$ to prove Theorem \ref{nonmixing}, and we finally prove Theorem \ref{mixing} using a result of Cartwright and Soardi.
	
	\begin{acknowledgements}
		I express my gratitude to my PhD advisor Damien Gaboriau for his supervision and for valuable suggestions during the writing of this work. I also warmly thank Grégory Miermont and Mathieu Mourichoux for their enlightening explanations about random walks. I thank Martín Gilabert Vio and Mathieu Mourichoux for insightful discussions that helped me to correct a mistake in a previous version of that paper. I also thank Rémi Coulon and Ashot Minasyan for their careful reading that significantly improved the quality of this text. Finally, I warmly thank the anonymous referee for valuable suggestions that significantly improved the clarity of this paper. This work was supported by a CDSN from ENS de Lyon.
	\end{acknowledgements}
	
	\tableofcontents
	
	\section{Preliminaries and notations}
	
	We denote by $\mathcal{P}$ the set of prime numbers. For every integer $N$ and every prime number $p$, we denote by $|N|_p$ the $p$-adic valuation of $N$, that is, the largest $n \in \mathbb{N}$ such that $p^n$ divides $N$. For every integers $m,n \in \mathbb{Z} \smallsetminus \{0\}$ we denote by $m \wedge n$ their greatest common divisor, that is, the largest integer $k \in \mathbb{N}$ dividing both $m$ and $n$. By convention, $\infty \wedge n = n \wedge \infty = |n|$ for every $n \in \mathbb{N}^*$. By "countable" we mean finite or in bijection with $\mathbb{N}$. For any finite set $F$, we denote by $|F|$ its cardinality. We use the same convention as in \cite{serre} about graphs. We denote by $\mathcal{V}(\mathcal{G})$ the set of vertices of an oriented graph $\mathcal{G}$ and by $\mathcal{E}(\mathcal{G})$ its set of edges. We denote by $\mathcal{E}^+(\mathcal{G})$ (\textit{resp.} $\mathcal{E}^-(\mathcal{G})$) its set of positive (\textit{resp.} negative) edges, and by $\src : \mathcal{E}(\mathcal{G}) \to \mathcal{V}(\mathcal{G})$ and $\trg : \mathcal{E}(\mathcal{G}) \to \mathcal{V}(\mathcal{G})$ the source and target maps, respectively. For any edge $e$, we denote by $\overline{e}$ the reversed edge. If $\mathcal{G}$ is a graph, we denote by $d_{\mathcal{G}}$ the induced metric on the set of vertices. If $v \in \mathcal{V}(\mathcal{G})$ and $R \in \mathbb{N}^*$, we denote by $B_{\mathcal{G}}(v,R)$ the closed ball of center $v$ and radius $R$ in $\mathcal{G}$ (for the metric $d_{\mathcal{G}}$). For any subgraph $\mathcal{G}_0$ of $\mathcal{G}$ and any $R \in \mathbb{N}^*$, we denote by $\mathcal{N}_{\mathcal{G},R}(\mathcal{G}_0)$ the $R$-neighborhood of $\mathcal{G}_0$ in $\mathcal{G}$, \textit{i.e.} the subgraph of $\mathcal{G}$ induced by the set of vertices at distance less than $R$ from $\mathcal{G}_0$.  By the half-graph of an edge $e$, we mean the connected component of $\mathcal{G} \smallsetminus \{e\}$ that contains $\trg(e)$. If this half-graph is a tree, we call it a half-tree. If $\mathcal{T}$ is a tree and $\mathcal{T}^{(0)}$ is a subtree of $\mathcal{T}$ and $e \in \mathcal{E}\left(\mathcal{T} \smallsetminus \mathcal{T}^{(0)}\right)$, we say that $e$ points towards $\mathcal{T}^{(0)}$ if the half-tree of $e$ contains $\mathcal{T}^{(0)}$.
	
	\subsection{Space of subgroups}
	
	Given an infinite countable group $\Gamma$, one can endow its set of subgroups $\Sub(\Gamma)$ with the \textbf{Chabauty topology}, which comes from the natural inclusion $\Sub(\Gamma) \hookrightarrow \{0,1\}^{\Gamma}$ of $\Sub(\Gamma)$ into the Cantor space. A basis of neighborhoods is given by the following family of clopen sets \[\mathcal{V}(O,I) = \{\Lambda \leq \Gamma \mid I \subseteq \Lambda \text{ \ and \ } \Lambda \cap O = \emptyset \}\]
	(where $I$ and $O$ are finite subsets of $\Gamma$). 
	
	One has a correspondence between subgroups of $\Gamma$ and isomorphism classes of transitive right actions of $\Gamma$ on pointed countable sets, which yields a correspondence between conjugacy classes of subgroups of $\Gamma$ and isomorphism classes of transitive right actions of $\Gamma$ on countable sets, where the action by conjugation amounts to changing the base point. It is given by the bijection \begin{equation}\label{corr}\begin{array}{ccccc}
		\{\text{isomorphism classes of pointed transitive right $\Gamma$-actions}\} & \to & \Sub(\Gamma) \\
		(X,x_0) \curvearrowleft^{\alpha} \Gamma & \mapsto & \Stab_{\alpha}(x_0) \\
	\end{array}\end{equation}
	whose inverse is given by \[\begin{array}{ccccc}
		\Sub(\Gamma) & \to & \{\text{isomorphism classes of pointed transitive right $\Gamma$-actions}\} \\
		\Lambda & \mapsto & (\Lambda \backslash \Gamma, \Lambda) \curvearrowleft \Gamma \\
	\end{array}.\]
	
	The Chabauty topology can be defined on the set of isomorphism classes of pointed transitive right $\Gamma$-actions thanks to \textbf{Schreier graphs}. Given a symmetric generating set $S$ of $\Gamma$ and a right $\Gamma$-action $\alpha$ on a pointed countable set $(X,x_0)$, one can define the (rooted) Schreier graph of $\alpha$ as follows: its set of vertices is $X$ and, for every $x \in X$ and $s \in S$, there is a positive edge labeled $s$ with source $x$ and target $x \cdot s$, whose opposite edge is labeled $s^{-1}$ (and has source $x \cdot s$ and target $x$). Its root is the point $x_0$. The set of isomorphism classes of pointed transitive right $\Gamma$-actions can be endowed with the following topology. A basis of neighborhoods of a pointed transitive right action  $(X,v) \curvearrowleft^{\alpha} \Gamma$ is given by the set of actions whose Schreier graph has the same $R$-ball around the origin (for $R > 0$): \begin{align*}V_R = &\{(X',v') \curvearrowleft^{\beta} \Gamma, (B_{\Sch(\beta)}(v',R),v') \text{ and } (B_{\Sch(\alpha)}(v, R), v) \\ &\text{ are isomorphic (as pointed labeled graphs)}.\}\end{align*}
	When $S$ is finite, this is exactly the Chabauty topology (\textit{via} the aforementioned identification between actions and subgroups). We refer to \cite[Section 2]{bontemps} for more details. From now on, we will freely identify the set of subgroups of $\Gamma$ with the set of isomorphism classes of pointed transitive right actions of $\Gamma$. 
	
	By a theorem of Cantor-Bendixson, there exists a unique decomposition \[\Sub(\Gamma) = \mathcal{K}(\Gamma) \sqcup C\]
	where $C$ is countable and $\mathcal{K}(\Gamma)$ is a closed subset without isolated point, called the \textbf{perfect kernel} of $\Gamma$. This is the largest closed subset of $\Sub(\Gamma)$ without isolated points, or equivalently, the set of subgroups all of whose neighborhoods are uncountable. See \cite[Section 6]{Kechris} for more details.
	
	\subsection{Baumslag-Solitar groups; preactions and $(m,n)$-(Schreier) graphs}
	
	In this section, we provide a quick reminder on Bass-Serre theory and we recall the main tools that were introduced in \cite{solitar} to study $\Sub(\BS(m,n))$. Let $m,n \in \mathbb{Z}$ such that $\min(|m|,|n|) > 1$. The Baumslag-Solitar group of parameters $(m,n)$ is the group $\BS(m,n)$ defined by the presentation \eqref{baumsol}.
	
	\subsubsection{Some background on Bass-Serre theory}
	
	As an HNN-extension, the group $\BS(m,n)$ acts on its \textbf{Bass-Serre tree} $\mathcal{T}_{m,n}$ (on the left): this is the infinite oriented tree all of whose vertices have $m$ incoming edges and $n$ outgoing edges. This tree arises as follows: it is obtained from the (right) Cayley graph of $\BS(m,n)$ (with respect to the generating set $\{b,b^{-1},t,t^{-1}\}$) by shrinking all the $\langle b \rangle$-orbits. There is a natural map $p : \Cay(\BS(m,n)) \to \mathcal{T}_{m,n}$ (applying \cite[Definition 3.10]{solitar} to the free action of $\BS(m,n)$ on itself); it sends \begin{itemize}
		\item the vertex $\gamma$ of the Cayley graph to the vertex $\gamma \langle b \rangle$ of the Bass-Serre tree;
		\item the positive edge $(\gamma, \gamma  t)$ of the Cayley graph to the positive edge $\gamma \langle b^n \rangle$ of the Bass-Serre tree;
		\item the negative edge $(\gamma, \gamma  t^{-1})$ of the Cayley graph to the negative edge $\gamma \langle b^m \rangle$ of the Bass-Serre tree;
		\item the edges of the form $(\gamma, \gamma b^k)$ to the single vertex $\gamma \langle b \rangle$.
	\end{itemize}
	The action of $\BS(m,n)$ on the set of (positive) edges of $\mathcal{T}_{m,n}$ is defined by: $\gamma \cdot g\langle b^n\rangle = \gamma g \langle b^n \rangle$. The action of $\BS(m,n)$ on its Bass-Serre tree has a single orbit of vertices and the quotient graph $\BS(m,n) \backslash \mathcal{T}_{m,n}$ is a loop. The vertex $p(1) = \langle b \rangle$ is the root of the Bass-Serre tree. See \cite[Section 2.3, Section 3]{solitar} for more details.
	
	\subsubsection{Normal forms}\label{normalform}
	
	A \textbf{word} of $\BS(m,n)$ is a finite sequence of elements of $\{b,b^{-1}, t, t^{-1}\}$.
	The element associated to such $s = (u_0, \ldots, u_{k_0})$ in $\BS(m,n)$ is the product $\mathfrak{s}=u_1 \cdots u_{k_0}$. A \textbf{subword} $s'$ of $s$ is a sequence of the form $s'=(u_1,\ldots,u_{l_0})$, for some $l_0 \leq k_0$. By an abuse of notation, we will freely identify the word $s$ and the element $\mathfrak{s}$ of $\BS(m,n)$ (keeping in mind that the writing of $\mathfrak{s}$ as a product of elements of $\{b,b^{-1},t,t^{-1}\}$ is not unique!). In particular, the set of subwords of an element of $\BS(m,n)$ strongly depends on the chosen writing of this element as a product of elements in $\{b,b^{-1},t,t^{-1}\}$. 
	
	\begin{example}For instance, the subwords of the word $tbbt^{-1}b^{-1}b^{-1}b^{-1}$ in $\BS(2,3)$ are $\emptyset$, $t$, $tb$, $tbb$, $tbbt^{-1}$, $tbbt^{-1}b^{-1}$, $tbbt^{-1}b^{-1}b^{-1}$, $tbbt^{-1}b^{-1}b^{-1}b^{-1}$ (though this is the trivial element of $\BS(2,3)$).
	\end{example}
	
	As a particular case of \cite[Chapter 1, Section 5.2]{serre} (which yields the normal form for HNN-extensions), any element $g$ of $\BS(m,n)$ can be uniquely written as \[g = b^{n_1}t^{\varepsilon_1}b^{n_2}\cdots b^{n_r}t^{\varepsilon_r}b^{n_{r+1}}\]
	where $n_i \in \mathbb{Z}$ for every $i \in \llbracket 1, r+1 \rrbracket$, and $\varepsilon_i \in \{1,-1\}$ for every $i \in \llbracket 1,r \rrbracket$, and \begin{itemize}
		\item if $\varepsilon_i=1$, then $n_{i+1} \in \left\llbracket 0, |m|-1\right\rrbracket$;
		\item if $\varepsilon_i=-1$, then $n_{i+1} \in \left\llbracket 0, |n|-1\right\rrbracket$;
        \item there is no subword of the form $t^{\varepsilon}b^0t^{-\varepsilon}$.
	\end{itemize}
	This is the \textbf{normal form} of $g$. If $g$ is written in its normal form, we say that $g$ is \textbf{reduced}. The \textbf{height} of an element $g = b^{n_1}t^{\varepsilon_1}b^{n_2}\cdots b^{n_r}t^{\varepsilon_r}b^{n_{r+1}} \in \BS(m,n)$ written in its normal form is the integer \[\mathfrak{h}(g) = r.\]
	\begin{remark}\label{rem: dist normal form} This is also the distance \[\mathfrak{h}(g) = d_{\mathcal{T}_{m,n}}(v, g \cdot v),\] where $v = \langle b \rangle$ is the root of the Bass-Serre tree $\mathcal{T}_{m,n}$ of $\BS(m,n)$. More specifically, if $g = b^{n_1}t^{\varepsilon_1}b^{n_2}\cdots b^{n_r}t^{\varepsilon_r}b^{n_{r+1}}$ is written in its normal form, then the geodesic between $v$ and $g \cdot v$ is of the form $e_1, \ldots, e_r$, where the orientation of $e_i$ is given by the sign of $\varepsilon_i$.
	\end{remark}
	\subsubsection{Preactions and $(m,n)$-graphs}\label{projection}
	
	Any subgroup $\Lambda \leq \BS(m,n)$ acts on $\mathcal{T}_{m,n}$. As the following diagram 
	\[\xymatrix{
		{} & \Cay(\BS(m,n)) \ar[ld]_{\Lambda \backslash} \ar[rd]^{/\langle b \rangle} & {} \\
		\Sch(\Lambda) \ar[rd]_{/\langle b \rangle} & {} & \mathcal{T}_{m,n} \ar[ld]^{\Lambda \backslash} \\
		{} & \Lambda \backslash \Cay(\BS(m,n)) / \langle b \rangle & {}
	}\]
	commutes, the quotient graph $\Lambda \backslash \mathcal{T}_{m,n}$ can be obtained by shrinking all the $\langle b \rangle$-orbits of the Schreier graph of $\Lambda$ (with respect to the generating set $\{ b, b^{-1}, t, t^{-1}\}$).
	
	Motivated by this observation, we define a (right) \textbf{preaction} on a pointed countable set $(X,x_0)$ as a couple of partial bijections $(\beta, \tau)$ such that $\beta$ is a genuine bijection of $X$, $\dom(\tau)$ is $\beta^n$-invariant, $\rng(\tau)$ is $\beta^m$-invariant and $x \cdot \tau \beta^m = x \cdot \beta^n \tau$ for every $x \in \dom(\tau)$. The \textbf{Schreier $(\bm{m,n})$-graph} $\Sch(\alpha)$ of a preaction $\alpha$ on a countable set $X$ is the oriented graph whose set of vertices is $X$ and whose (positive) edges are either of the form $(x,x \cdot \beta)$, or of the form $(x, x\cdot \tau)$ (for $x \in \dom(\tau)$). Every path $c$ in $\pi_1(\Sch(\alpha),x_0)$ is labeled by a word whose letters lie in $\{b,b^{-1},t,t^{-1}\}$, thus defines an element $\psi(c) \in \Gamma$. The map $\psi : \pi_1(\Sch(\alpha),x_0) \to \Gamma$ is a group morphism, and the image of this map is called the \textbf{stabilizer} of the point $x_0$ for the preaction $\alpha$, and denoted by $\bm{\Stab_{\alpha}}\textbf{($\bm{x_0}$)}$. The \textbf{($\bm{m,n}$)-graph} $\mathcal{G}_{\alpha}$ of a preaction $\alpha$ is the Schreier $(m,n)$-graph of $\alpha$ all of whose $\beta$-orbits have been shrunk and labeled by their cardinalities. More specifically \begin{itemize}
		\item its set of vertices is $X/ \left\langle \beta \right\rangle$ and every vertex $x\left\langle \beta \right\rangle$ is labeled by the cardinality $\left|x \cdot \left\langle \beta \right\rangle\right|$;
		\item its set of positive (\textit{resp.} negative) edges is $\dom(\tau)/\left\langle \beta^n\right\rangle$ (\textit{resp.} $\rng(\tau)/\left\langle \beta^m\right\rangle$);
		\item the target and source maps are defined by $\src\left(x \cdot \left\langle \beta^n \right\rangle\right) = x \cdot \left\langle \beta \right\rangle$ and $\trg\left(x \cdot \left\langle \beta^n \right\rangle\right) = x\tau \cdot \left\langle \beta \right\rangle$. Moreover, $\overline{x \cdot \left\langle \beta^n \right\rangle} = x \tau \cdot \left\langle \beta^m \right\rangle$.
	\end{itemize}

    \begin{notation}
        Throughout the paper, when $\alpha$ is a preaction on a pointed countable set $(X, x)$, we will denote by $\mathcal{G}_{\alpha}$ the $(m,n)$-graph of $\alpha$.
    \end{notation}

    A preaction $\alpha = (\beta,\tau)$ is called \textbf{transitive} if its $(m,n)$-graph is connected. It is called \textbf{saturated} if $\dom(\beta)=\rng(\beta) = X$. If $\alpha$ is a saturated and transitive preaction on a pointed countable set $(X,x_0)$, the data of the $(m,n)$-graph of $\alpha$ is equivalent to the data of the graph of groups of $\Stab_{\alpha}(x_0)$, given by its action on the Bass-Serre tree $\mathcal{T}_{m,n}$. See \cite[Section 3]{solitar} for more details. 

     \begin{remark}\label{properinvsubtree}
        Let us assume that $\Lambda \leq \BS(m,n)$ is a finitely generated subgroup whose graph of groups (induced by its action on $\mathcal{T}_{m,n}$) is infinite (that is to say, $\Lambda \backslash \mathcal{T}_{m,n}$ is infinite). Let $\pi : \mathcal{T}_{m,n} \to \Lambda \backslash \mathcal{T}_{m,n}$ be the projection. Then, for any connected finite subgraph $\mathcal{F}$ of this graph of groups such that $\Lambda$ is the fundamental group of $\mathcal{F}$ (in the formalism of Bass Serre theory), the preimage $\pi^{-1}(\mathcal{F})$ is a proper invariant subtree of $\mathcal{T}_{m,n}$. Let us explain why. Notice that the complement of $\mathcal{F}$ in the graph of groups of $\Lambda$ is a forest; otherwise, any edge path with source and target in $\mathcal{F}$ that is not fully contained in $\mathcal{F}$ would yield an extra edge generator $t \notin \Lambda$ by Bass-Serre theory. And if $\pi^{-1}(\mathcal{F})$ was not connected, then any edge path of $\mathcal{T}_{m,n}$ connecting two distinct connected components of $\pi^{-1}(\mathcal{F})$ would be mapped by $\pi$ to an edge path with source and target in $\mathcal{F}$ that is not fully contained in $\mathcal{F}$, contradicting the previous observation.
     \end{remark}
	
	An abstract \textbf{($\bm{m,n}$)-graph} is then an oriented labeled graph all of whose vertices are labeled by an integer or $\infty$ and that satisfies the following arithmetical properties: \begin{enumerate}
		\item every vertex labeled $N$ has at most $N \wedge n$ outgoing edges and at most $N \wedge m$ incoming edges;
		\item \textbf{(Transfer Equation)} for every positive edge with source labeled $N$ and target labeled $M$, one has \begin{equation}\label{transferequation}\frac{N}{N \wedge n} = \frac{M}{M \wedge m}.\end{equation}
	\end{enumerate}
	It is called \textbf{saturated} if equalities hold for every vertex in the first item.
	
	We will make use of the following lemma that gives the existence of a particular extension of a given preaction. A proof can be found in \cite[Theorem 4.6]{solitar2}:
	
	\begin{lemma}[Maximal forest saturation]\label{maxforest}
		For any transitive and non-saturated preaction $\alpha$ on a pointed countable set $(X,x_0)$, there exists a unique transitive action $\beta$ (up to isomorphism) such that: 
\begin{itemize}
            \item $\beta$ extends $\alpha$ (in particular, $\mathcal{G}_{\beta}$ contains $\mathcal{G}_{\alpha}$ as a subgraph);
            \item $\Stab_{\beta}(x_0) = \Stab_{\alpha}(x_0)$
            \end{itemize}
            Moreover, this action has the following properties: 
            \begin{itemize}
	        \item the subgraph induced by the set of vertices of $\mathcal{G}_{\beta} \smallsetminus \mathcal{G}_{\alpha}$ is an infinite forest $\mathcal{F}$;
			\item the subgraph induced by the set of vertices of $\mathcal{G}_{\alpha}$ is $\mathcal{G}_{\alpha}$;
			\item for any edge $e \in \mathcal{E}(\mathcal{G}_{\beta}) \smallsetminus \mathcal{E}(\mathcal{G}_{\alpha})$, denoting by $N$ the label of $\src(e)$ and by $M$ the label of $\trg(e)$: \begin{itemize} \item if the half-graph of $e$ is in $\mathcal{F}$, then $M = \frac{N|m|}{N \wedge n}$;
            \item otherwise, \textit{i.e.} if the half-graph of $\overline{e}$ is in $\mathcal{F}$, then $N = \frac{M|n|}{M \wedge m}$.\end{itemize}
		\end{itemize}
	\end{lemma}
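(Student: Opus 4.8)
The plan is to construct the extended action $\beta$ explicitly, by freely attaching to $\mathcal{G}_\alpha$ exactly the edges and vertices that are \emph{missing} in order to make every vertex saturated, while taking care never to create a loop in the quotient graph of groups (so that the stabilizer does not grow). Concretely, I would proceed by a greedy/inductive saturation: enumerate the vertices of $\mathcal{G}_\alpha$, and at each unsaturated vertex $v$ with label $N$, for each of the $N\wedge n - (\text{current outdegree})$ missing outgoing slots, attach a \emph{new} positive edge $E$ to a \emph{freshly created} vertex $w$, whose label is forced by the Transfer Equation \eqref{transferequation} to be $M=\frac{N|m|}{N\wedge n}$; symmetrically for the missing incoming slots, attaching $E$ with $\src(E)$ new and $N=\frac{M|n|}{M\wedge m}$. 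Each newly created vertex is itself then processed, so the procedure continues forever, building an infinite forest $\mathcal{F}$ hanging off $\mathcal{G}_\alpha$. The freshly created pieces are genuine trees because we never glue a new vertex back onto an already-existing one, which is precisely what keeps $\pi_1$ of the quotient unchanged.

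The first thing I would verify is that this construction produces a bona fide saturated preaction, in fact a genuine action: I must lift the combinatorial $(m,n)$-graph $\mathcal{G}_\beta$ back to a Schreier $(m,n)$-graph, i.e.\ reconstruct the partial bijections $(\beta,\tau)$ on the vertex set $X'$ obtained by un-shrinking each vertex of label $M$ into a $\beta$-orbit of cardinality $M$ (a $\mathbb{Z}/M\mathbb{Z}$ or $\mathbb{Z}$ cycle), and then defining $\tau$ along each edge compatibly with the constraint $x\cdot\tau\beta^m=x\cdot\beta^n\tau$. The arithmetic conditions in the definition of abstract $(m,n)$-graph — that a vertex of label $N$ has at most $N\wedge n$ outgoing and $N\wedge m$ incoming edges, together with the Transfer Equation — are exactly what guarantee that such $\tau$ exists and can be chosen to be a partial bijection with $\beta^n$-invariant domain and $\beta^m$-invariant range; saturation of every vertex then upgrades $\tau$ (together with $\beta$, which is now a total bijection) to a total action. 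I would invoke the equivalence recorded in Section~\ref{projection} between saturated transitive preactions and graphs of groups to make this lift clean rather than doing the bijection bookkeeping by hand.

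Next I would establish the two defining properties. That $\beta$ extends $\alpha$ and that $\mathcal{G}_\beta\supseteq\mathcal{G}_\alpha$ is immediate from the construction, since we only ever \emph{added} vertices and edges and never modified the $\langle\beta\rangle$-orbits or $\tau$-values already present in $\alpha$. The crucial property $\Stab_\beta(x_0)=\Stab_\alpha(x_0)$ follows from the tree structure of $\mathcal{F}$: any loop in $\Sch(\beta)$ based at $x_0$ must, after projecting to the $(m,n)$-graph, stay inside $\mathcal{G}_\alpha$, because every edge we added leads into a forest and hence cannot be part of a reduced cycle. More precisely, a path that ventures into $\mathcal{F}$ and returns must backtrack along the same edge (as $\mathcal{F}$ is a forest attached to $\mathcal{G}_\alpha$ along single edges), so by Bass--Serre theory (cf.\ Remark~\ref{properinvsubtree}) it contributes trivially to $\psi$; thus $\image\psi$ is unchanged. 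The forest $\mathcal{F}=\mathcal{G}_\beta\smallsetminus\mathcal{G}_\alpha$ is infinite because $\min(|m|,|n|)>1$ forces $M=\frac{N|m|}{N\wedge n}\geq N$ with the new vertices themselves unsaturated, so the attaching process never terminates; and the label formulas in the last bullet are literally the Transfer Equation solved for the endpoint lying in $\mathcal{F}$, reading off whether the new vertex is the target (so $M=\frac{N|m|}{N\wedge n}$) or the source (so $N=\frac{M|n|}{M\wedge m}$).

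The main obstacle, and the point deserving genuine care, is \textbf{uniqueness up to isomorphism}. Existence is a flexible greedy construction, but uniqueness requires that any two actions $\beta,\beta'$ satisfying the two bullet conditions be isomorphic over $\mathcal{G}_\alpha$. Here the condition $\Stab_{\beta}(x_0)=\Stab_\alpha(x_0)=\Lambda$ is the lever: it says that $\Sch(\beta)$ is the Schreier graph of the \emph{same} subgroup $\Lambda$ as seen through a possibly larger, but stabilizer-preserving, saturation. I would argue that any stabilizer-preserving saturation is forced to attach, at each unsaturated slot, a \emph{tree} (never gluing back, since any reidentification would enlarge $\pi_1$ and hence $\Lambda$), and that the label of each attached vertex is then pinned down by the Transfer Equation with no remaining freedom — a vertex of label $N$ in $\mathcal{G}_\alpha$ has a unique saturated local picture given the tree constraint. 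Assembling these forced local choices into a global isomorphism, I expect, reduces to checking that the two resulting forests are isomorphic as labeled rooted forests attached along the boundary of $\mathcal{G}_\alpha$, which follows because at every vertex the number of missing slots and the forced labels coincide; this is where I would be most careful to cite \cite[Theorem 4.6]{solitar2} rather than reprove the rigidity from scratch.
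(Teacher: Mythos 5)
The paper does not actually prove Lemma~\ref{maxforest}: it states it and refers to \cite[Theorem 4.6]{solitar2} for the proof. So your proposal cannot be compared line-by-line with an argument in this text; what can be said is that your greedy tree-attachment construction (saturate each deficient slot by a fresh vertex, never glue back, lift the resulting $(m,n)$-graph to a Schreier graph, observe that backtracking excursions into the forest contribute nothing new to $\image\psi$) is exactly the architecture one expects behind the cited result, and most of your verification steps (infiniteness of $\mathcal{F}$ from $\min(|m|,|n|)>1$, the induced-subgraph conditions, the reduction of uniqueness to rigidity of the attached labeled forests) are sound.

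There is, however, one genuine gap, and it sits at the point your whole argument pivots on. You assert twice that the label of a newly attached vertex is ``forced by the Transfer Equation'' to be $M=\frac{N|m|}{N\wedge n}$, and later that the labels are ``pinned down by the Transfer Equation with no remaining freedom.'' This is false: equation \eqref{transferequation} only requires $\frac{M}{M\wedge m}=\frac{N}{N\wedge n}$, which in general has several solutions (e.g.\ for $|m|=4$ and $\frac{N}{N\wedge n}=3$, the labels $M=3,6,12$ all satisfy it). The value $\frac{N|m|}{N\wedge n}$ is the \emph{maximal} solution, and what forces it is not the arithmetic of $(m,n)$-graphs but the condition $\Stab_\beta(x_0)=\Stab_\alpha(x_0)$: a new $\langle\beta\rangle$-orbit of cardinality $M'$ contributes an element of the form $\gamma b^{j}\gamma^{-1}$ (with $j$ governed by $M'$) to the stabilizer, and only for the maximal choice does this element already lie in $\Stab_\alpha(x_0)$; any strictly smaller admissible $M'$ strictly enlarges the stabilizer even though the quotient graph remains a forest. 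This same imprecision undermines your uniqueness argument as written, since ``no remaining freedom'' is precisely what needs proving and does not follow from \eqref{transferequation} alone. The fix is to derive the label from the vertex stabilizers in the Bass--Serre tree (the stabilizer of a vertex one step outside the invariant core equals the adjacent edge stabilizer), or simply to lean on \cite[Theorem 4.6]{solitar2} for this computation as well, rather than only for the final assembly.
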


    The $\BS(m,n)$-action $\beta$ constructed in Lemma \ref{maxforest} is called the \textbf{maximal forest saturation action of $\bm{\alpha}$.}

    \begin{remark}\label{maxforestsubgroup}
        If $\Lambda$ is a finitely generated subgroup of $\BS(m,n)$, then there exists a preaction $\alpha$ on a pointed countable set $(X,x_0)$ whose $(m,n)$-graph is finite and such that $\Stab_{\alpha}(x_0) = \Lambda$. The $\BS(m,n)$-right action associated to $\Lambda$ by the correspondence \eqref{corr} is exactly the maximal forest saturation action $\beta$ of $\alpha$ given by Lemma \ref{maxforest}.
    \end{remark}

    \begin{remark}\label{invsubtreepreac}
        We keep the notations of Lemma \ref{maxforest}. Identifying $(m,n)$-graphs with graphs of subgroups, Remark \ref{properinvsubtree} tells us that, denoting by $\pi : \mathcal{T}_{m,n} \to \mathcal{G}_{\beta}$ the projection, the preimage $\pi^{-1}(\mathcal{G}_{\alpha})$ is a proper $\Stab_{\alpha}(x_0)$-invariant subtree of $\mathcal{T}_{m,n}$.
    \end{remark}

Keeping the notations of Lemma \ref{maxforest}, in the case where $|m|=|n|$ or when the $\langle b \rangle$-orbits of $\alpha$ are infinite, the forest $\mathcal{F}$ is a collection of half-subtrees of $\mathcal{T}_{m,n}$. More formally: 

\begin{proposition}\label{homeo}
Let $\alpha$ be a transitive and non-saturated preaction, and let $\beta$ be its maximal forest saturation action. Let $\mathcal{F}$ be the forest induced by the set of vertices of $\mathcal{G}_{\beta} \smallsetminus \mathcal{G}_{\alpha}$.
    Let us assume that \begin{itemize}
        \item either $|m|=|n|$;
        \item or the $\langle b \rangle$-orbits of $\alpha$ are infinite.
    \end{itemize}
   Let $f$ be any edge with source in $\mathcal{G}_{\alpha}$ and target outside $\mathcal{G}_{\alpha}$ and let $\widehat{\mathcal{T}} \subseteq \mathcal{F}$ be the half-tree of $f$. Then, the projection $\pi : \mathcal{T}_{m,n} \to \mathcal{G}_{\beta}$ induces a graph isomorphism between any connected component of $\pi^{-1}\left(\widehat{\mathcal{T}}\right)$ and $\widehat{\mathcal{T}}$. 
\end{proposition}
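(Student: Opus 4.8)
The plan is to show that $\pi$ restricts to a covering map from $\pi^{-1}(\widehat{\mathcal{T}})$ onto the tree $\widehat{\mathcal{T}}$, and then to use that a covering of a tree splits into disjoint copies, so that each connected component maps isomorphically. I would first describe the fibres of $\pi$ locally. A vertex $v$ of $\mathcal{G}$ labelled $N$ is the image of a vertex $\tilde v = \gamma \langle b \rangle$ of $\mathcal{T}_{m,n}$ whose $\Stab_\alpha(x_0)$-stabilizer is $\langle \gamma b^{N} \gamma^{-1}\rangle$ when $N < \infty$ and is trivial when $N = \infty$. This cyclic group permutes the $n$ outgoing edges at $\tilde v$ by $i \mapsto i+N \bmod n$ and the $m$ incoming edges by $i \mapsto i+N \bmod m$; two edges at $\tilde v$ have the same image in $\mathcal{G} = \Stab_\alpha(x_0)\backslash \mathcal{T}_{m,n}$ exactly when they lie in a common orbit. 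Hence the fibre of $\pi$ above a given edge of $v$ has $|n|/(N\wedge n)$ elements (outgoing case) or $|m|/(N\wedge m)$ elements (incoming case), so $\pi$ is locally injective at $\tilde v$ precisely when $N \wedge n = |n|$ and $N \wedge m = |m|$, i.e.\ when both $|m|$ and $|n|$ divide $N$ (a condition we read as automatically satisfied when $N = \infty$).

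The crux is therefore to verify that every vertex of $\widehat{\mathcal{T}}$ carries a label divisible by both $|m|$ and $|n|$. Each vertex $w$ of $\widehat{\mathcal{T}}$ is the deeper endpoint of a unique new edge $E'$ (the last edge of the reduced path from $\src(E)$ to $w$, with $E' = E$ when $w = \trg(E)$); writing $S$ for the label of the shallower endpoint of $E'$, Lemma \ref{maxforest} gives the label of $w$ as $\tfrac{S|m|}{S\wedge n}$ or $\tfrac{S|n|}{S\wedge m}$ according to the orientation of $E'$. When $|m| = |n| =: k$, both expressions equal $\mathrm{lcm}(S,k)$, which is divisible by $k = |m| = |n|$; when the $\langle b \rangle$-orbits of $\alpha$ are infinite, every vertex of $K$ is labelled $\infty$ and both formulas return $\infty$. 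In either of the two cases of the hypothesis, the label of $w$ is thus divisible by $|m|$ and by $|n|$ (or equals $\infty$), which is exactly the condition found above.

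Combining the two points, $\pi$ restricted to $\pi^{-1}(\widehat{\mathcal{T}})$ carries the edges of $\pi^{-1}(\widehat{\mathcal{T}})$ incident to any lift $\tilde v$ bijectively onto the edges of $\widehat{\mathcal{T}}$ incident to $v$: it is a locally bijective graph morphism, i.e.\ a covering of $\widehat{\mathcal{T}}$. Now let $C$ be a connected component of $\pi^{-1}(\widehat{\mathcal{T}})$; being a connected subgraph of $\mathcal{T}_{m,n}$ it is a subtree. Surjectivity of $\pi|_C$ onto $\widehat{\mathcal{T}}$ follows from the surjectivity on stars together with the connectedness of $\widehat{\mathcal{T}}$. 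For injectivity, if two vertices of $C$ had the same image, the geodesic joining them in $C$ would map to a reduced closed path of the tree $\widehat{\mathcal{T}}$ (reduced because $\pi|_C$ is locally injective), which can only be trivial; thus $\pi|_C$ is a graph isomorphism and hence a homeomorphism onto $\widehat{\mathcal{T}}$. The main obstacle is the arithmetic step of the middle paragraph: one must check, across both edge orientations and under each of the two hypotheses, that the label-propagation of Lemma \ref{maxforest} forces divisibility by both $|m|$ and $|n|$ at every vertex of the forest—this is precisely where the assumption $|m| = |n|$ (or the infiniteness of the $\langle b \rangle$-orbits) is indispensable.
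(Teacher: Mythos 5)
Your proof is correct and follows essentially the same route as the paper: show that under either hypothesis the projection is locally bijective on the stars of vertices of $\widehat{\mathcal{T}}$, then use that a connected, locally bijective cover of a tree is an isomorphism. The only differences are presentational — you derive local injectivity from an explicit fibre count via the divisibility condition $|m|, |n| \mid N$ rather than the paper's direct degree count ($|m|$ incoming, $|n|$ outgoing edges at each vertex of $\mathcal{G} \smallsetminus K$, matching $\mathcal{T}_{m,n}$), and you prove by hand the final step that the paper delegates to Serre's lemma on locally injective morphisms into trees.
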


\begin{proof}
    By the properties of the maximal forest saturation action (\textit{cf.} Lemma \ref{maxforest}), any vertex of $\mathcal{G}_{\beta} \smallsetminus \mathcal{G}_{\alpha}$ has $|m|$ incoming edges and $|n|$ outgoing edges if $|m|=|n|$ or if the labels of the vertices of $\mathcal{G}_{\beta}$ are infinite (\textit{i.e.} the $\langle b \rangle$-orbits of $\alpha$ are infinite). 
    As any vertex of $\mathcal{T}_{m,n}$ also has $|m|$ incoming edges and $|n|$ outgoing edges, this tells us that $\pi$ induces a locally injective graph morphism between any connected component of $\pi^{-1}\left(\widehat{\mathcal{T}}\right)$ and $\widehat{\mathcal{T}}$. Finally, as $\widehat{\mathcal{T}}$ is a tree, \cite[Section 4.5, Lemma 5]{serre} tells us that this induced graph morphism is in fact an isomorphism. 
 \end{proof}

	Given a preaction $\alpha$ on a pointed countable set $(X,x_0)$, one has a projection $\Sch(\alpha) \to \mathcal{G}_{\alpha}$, \begin{itemize}
		\item that shrinks the $\langle \beta \rangle$-orbits;
		\item that sends the edge labeled $t$ connecting $x$ to $x \cdot \tau$ to the edge $x \cdot \langle \beta^n \rangle$ for every $x \in X$
			\end{itemize} (\textit{cf.} \cite[Definition 3.10]{solitar} for more details). 

    \begin{notation}
        For any preaction $\alpha$, we denote by $p_{\alpha} : \Sch(\alpha) \to \mathcal{G}_{\alpha}$ the projection.
    \end{notation}
	
	Any couple $(\gamma,x)$, where $\gamma=b^{n_1}t^{\varepsilon_1}b^{n_2}\cdots b^{n_r}t^{\varepsilon_r}b^{n_{r+1}}$ is a word in $\BS(m,n)$ and $x$ is a vertex of $\Sch(\alpha)$, leads to a unique edge path $e_1,\ldots,e_r$ in $\mathcal{G}_{\alpha}$ (whose orientation is given by the sequence of signs $(\varepsilon_1,\ldots,\varepsilon_r)$ and such that $\src(e_i) = p_{\alpha}(x \cdot b^{n_1}t^{\varepsilon_1}\cdots t^{\varepsilon_{i-1}}b^{n_{i}})$ for every $i \in \llbracket 1, r \rrbracket$, and $\trg(e_r) = p_{\alpha}(x \cdot \gamma)$). We say that the edge path $e_1,...,e_r$ \textbf{derives from} $x$ and the word $\gamma$. This observation leads to the following estimate: for any $x \in \mathcal{V}(\Sch(\alpha))$, one has \begin{equation}\label{estimate}
		d_{\mathcal{G}_{\alpha}}(p_{\alpha}(x), p_{\alpha}(x \cdot \gamma)) \leq \mathfrak{h}(\gamma).
	\end{equation}
	
	If the edge path $e_1,\ldots,e_r$ is reduced, then $\gamma$ is necessarily reduced. Conversely, if $\gamma=b^{n_1}t^{\varepsilon_1}b^{n_2}\cdots b^{n_r}t^{\varepsilon_r}b^{n_{r+1}}$ is reduced and the edge path $e_1,\ldots, e_r$ defined by $(\gamma, x)$ satisfies the following conditions: \begin{itemize}
		\item $m$ divides $|x \cdot b^{n_1}t^{\varepsilon_1}b^{n_2}\cdots b^{n_i}t^{\varepsilon_i} \langle b \rangle |$ for every $i$ such that $\varepsilon_{i} = 1$ and $\varepsilon_{i+1}=-1$;
		\item $n$ divides $|x \cdot b^{n_1}t^{\varepsilon_1}b^{n_2}\cdots b^{n_i}t^{\varepsilon_i} \langle b \rangle |$ for every $i$ such that $\varepsilon_{i}=-1$ and $\varepsilon_i=1$,
	\end{itemize} then the edge path $e_1,\ldots,e_r$ is reduced (\textit{cf.} \cite[Section 3.1, Lemma 5.9]{bontemps} for more details).
	
	From now on, we will freely identify subgroups of $\BS(m,n)$ and pointed transitive saturated preactions. 
	
	One defines a topology $\mathcal{T}_{sat}$ on the set of transitive saturated preactions as follows: a basis of neighborhoods of a transitive saturated preaction $\alpha$ on a set $(X,x)$ is given by \begin{align*}&\{\beta \text{ \ transitive saturated preaction on a pointed countable set $(X',x')$ \ } \mid \\ & (p_{\beta}^{-1}(B_{\mathcal{G}_{\beta}}(p_{\beta}(x'),R)), x') \text{ \ is isomorphic to \ } (p_{\alpha}^{-1}(B_{\mathcal{G}_{\alpha}}(p_{\alpha}(x),R)), x) \\& \text{ \ (as pointed labeled graphs)} \}\end{align*}
	(for all $R>0$).
    \begin{remark}\label{topfin}
	Notice that $\mathcal{T}_{sat}$ is strictly finer than the Chabauty topology: the subset $\mathcal{K}_{\infty}$ is open for $\mathcal{T}_{sat}$ (as it can be defined as the set of subgroups $\Lambda$ such that the preimage of the base vertex of the $(m,n)$-graph of $\Lambda$ in $\Sch(\Lambda)$ is infinite), but not for the Chabauty topology. In fact, $\mathcal{T}_{sat}$ is even strictly finer than the topology generated by the Chabauty topology and $\mathcal{K}_{\infty}$. For instance, the set \begin{align*}\mathcal{L}^o = &\big\{\Lambda \in \mathcal{K}_{\infty} \mid \text{there is no loop based at the base vertex of the $(m,n)$-graph} \\ &\text{ of $\Lambda$} \big\}\end{align*} is open for $\mathcal{T}_{sat}$. However, it is not open for the topology induced on $\mathcal{K}_{\infty}$ by the Chabauty topology. To see this, let us consider the sequence $\left(\left\langle t^2b^{mN} \right\rangle\right)_{N \in \mathbb{N}}~\in~\left(\mathcal{K}_{\infty} \smallsetminus \mathcal{L}^o\right)^{\mathbb{N}}$. If $x \in \BS(m,n)$ belongs to $\left\langle t^2b^{mN} \right\rangle $ for infinitely many $N$'s, then one can write $x = \left(t^2b^{mN}\right)^{k_N}$ where $k_N \in \mathbb{Z}$ for infinitely many $N$'s. Let us consider the morphism $\varphi : \BS(m,n) \to \mathbb{Q} \rtimes_{\frac{n}{m}} \mathbb{Z}$ (where the generator $1$ of $\mathbb{Z}$ acts on $\mathbb{Q}$ by $\frac{n}{m}$-multiplication) that sends $b$ to $(1,0)$ and $t$ to $(0,1)$. The image of $x$ under this morphism is $\varphi(x) = \left(mN \sum_{r=1}^{k_N}\left(\frac{n}{m}\right)^{2r}, 2k_N\right)$, so $k_N$ is constant (equal to some $k$). As the first coordinate $mN \sum_{r=1}^{k}\left(\frac{n}{m}\right)^{2r}$ is also constant, one necessarily has $k=0$. Thus, $x=\id$, so $\lim_{N \to \infty}\left\langle t^2b^{mN} \right\rangle~=~\{\id\} \in~\mathcal{L}^o$. We proved that $\mathcal{K}_{\infty} \smallsetminus \mathcal{L}^o$ is not closed for the topology induced by the Chabauty topology on $\mathcal{K}_{\infty}$, thus $\mathcal{L}^o$ is not open for the topology induced by the Chabauty topology and $\mathcal{T}_{sat}$.
    \end{remark}

    \begin{remark}\label{rem: basis tsat}
        Though we will mainly argue on the level of actions, one can translate this basis on the level of the set of subgroups: $\mathcal{T}_{sat}$ is then the topology generated by the basic open sets defined by $\mathcal{V}_{sat}(O,I )  = \{\Lambda \mid I \subseteq \Lambda, O \cap \Lambda = \emptyset\},$ where $O$ and $I$ are subsets of $\Gamma$ whose elements have bounded $t$-height.
    \end{remark}
	
	\subsection{Random walks on groups}\label{probas}
	
	The definitions of this section come from \cite{osin}. Let $\Gamma$ be a countable group. Let $\mu : \Gamma \to [0,1]$ be a probability measure on $\Gamma$. A \textbf{random walk on $\BS(m,n)$ with step distribution $\bm{\mu}$} is a random sequence $(G_k)_{k \in \mathbb{N}} = (S_1 \cdots S_k)_{k \in \mathbb{N}}$ of $\Gamma$ where the $S_i$'s are independently $\mu$-distributed random variables. 
	
	Let $(G_k)_{k \in \mathbb{N}}$ be a random walk on a group $\Gamma$ with step distribution $\mu$. A $\Gamma$-action on a Polish space $X$ is said \textbf{topologically $\bm{\mu}$-mixing} if for any non-empty open subsets $U, V \subseteq X$, the following holds: \[\lim_{k \to \infty}\mathbb{P}(G_k \cdot U \cap V \neq \emptyset) = 1.\]
	Notice that any topologically $\mu$-mixing action is also \textbf{$l$-topologically transitive} for every $l \in \mathbb{N}^*$: for every non-empty open subsets $U_1, \ldots , U_l, V_1, \ldots , V_l \subseteq X$, there exists an element $g \in \Gamma$ such that \[g \cdot U_i \cap V_i \neq \emptyset, \ \forall i \in \llbracket 1, l \rrbracket\]
	(see \cite[Proposition 1.2]{osin}).
	
	Given a probability measure $\mu : \Gamma \to [0,1]$, we denote by $\Supp(\mu)$ its \textbf{support}, that is to say, the set \[\Supp(\mu) = \{g \in \Gamma \mid \mu(g) > 0\}.\]
	We say that the support of $\mu$ is \textbf{generating} if $\Supp(\mu)$ generates $\Gamma$ as a semi-group, and that it is \textbf{symmetric} if $\Supp(\mu)$ is stable under inversion. 

    Given a probability measure $\mu$ on $\Gamma$, we denote by $\mu^{-1}$ the probability measure defined by: $\mu^{-1}(g) := \mu(g^{-1})$ for every $g \in \Gamma$.
	
	If $\Gamma$ acts isometrically on a metric space $X$, we say that the support of $\mu$ is \textbf{bounded} (with respect to this action) if $\Supp(\mu)(x)$ is a bounded subset of $X$ for some (equivalently for all) $x \in X$. 

    \begin{remark}\label{rem: reversed}
        If $\mu$ has one of the following properties: generating support, symmetric support, or bounded support (with respect to some isometric action of $\Gamma$), then so does $\mu^{-1}$.
    \end{remark}
	
	\begin{remark}Let $\Gamma = \BS(m,n)$, that acts on its Bass-Serre tree $\mathcal{T}_{m,n}$ and let $\mu$ be a probability measure on $\Gamma$. Then, by Remark~\ref{rem: dist normal form} the support of $\mu$ is bounded if and only if the set of integers \[\{\mathfrak{h}(g), g \in \Supp(\mu)\}\] is finite. We draw the attention of the reader to the fact that in this case, the support of $\mu$ need not be finite. 
	\end{remark}
	
	Throughout this paper, random variables are always denoted by capital letters. 
	
	\subsection{Phenotype of a subgroup of $\BS(m,n)$}
	
	In this section, we recall the computation of the perfect kernel and the construction of the decomposition of $\mathcal{K}(\BS(m,n))$ obtained in \cite{solitar}. The authors proved the following result:
	\begin{theorem}
		Let $(m,n) \in \left(\mathbb{Z} \smallsetminus \{0\}\right)^2$ such that $|m| \neq 1$ and $|n| \neq 1$. Then \[\mathcal{K}(\BS(m,n)) = \{\Lambda \leq \BS(m,n) \mid \text{ \ the $(m,n)$-graph of $\Lambda$ is infinite}\}. \]
	\end{theorem}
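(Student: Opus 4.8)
The plan is to set $\mathcal{S} = \{\Lambda \leq \BS(m,n) \mid \mathcal{G}_\Lambda \text{ is infinite}\}$ and to establish three properties of $\mathcal{S}$: it is Chabauty-closed, its complement is countable, and it has no isolated point. Granting these, the theorem follows formally from the description of the perfect kernel recalled above. Indeed, a closed set without isolated points is contained in $\mathcal{K}(\BS(m,n))$ by maximality, so $\mathcal{S} \subseteq \mathcal{K}(\BS(m,n))$; conversely $\mathcal{S}^c$ is open (as the complement of a closed set) and countable, so every $\Lambda \in \mathcal{S}^c$ admits $\mathcal{S}^c$ itself as a countable neighborhood and therefore cannot lie in $\mathcal{K}(\BS(m,n))$ (whose points have only uncountable neighborhoods), giving $\mathcal{K}(\BS(m,n)) \subseteq \mathcal{S}$.

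For the countability of $\mathcal{S}^c$, I would first note that a subgroup $\Lambda$ with finite $(m,n)$-graph is finitely generated: its $(m,n)$-graph is precisely the finite graph of groups carried by the action on $\mathcal{T}_{m,n}$, whose vertex and edge groups are cyclic (a vertex labeled $N$ carries a stabilizer isomorphic to $\mathbb{Z}$ when $N < \infty$, and the trivial group when $N = \infty$), so Bass-Serre theory presents $\Lambda$ as the fundamental group of a finite graph of finitely generated groups, hence finitely generated. Since $\BS(m,n)$ is countable it has only countably many finite subsets, hence only countably many finitely generated subgroups; as $\mathcal{S}^c$ is contained in the latter, it is countable.

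For closedness, suppose $\Lambda_i \to \Lambda$ in the Chabauty topology with every $\Lambda_i \in \mathcal{S}$, and assume toward a contradiction that $\mathcal{G}_\Lambda$ is finite. Then $\Lambda$ is finitely generated, say $\Lambda = \langle S \rangle$ with $S$ finite, and it acts cocompactly on $\mathcal{T}_{m,n}$ (the underlying graph of $\mathcal{G}_\Lambda$ is $\Lambda \backslash \mathcal{T}_{m,n}$, which is finite). Chabauty convergence forces $S \subseteq \Lambda_i$ for all large $i$, whence $\Lambda = \langle S \rangle \subseteq \Lambda_i$; the induced surjection $\Lambda \backslash \mathcal{T}_{m,n} \twoheadrightarrow \Lambda_i \backslash \mathcal{T}_{m,n}$ then shows $\mathcal{G}_{\Lambda_i}$ is finite, contradicting $\Lambda_i \in \mathcal{S}$.

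The main work is to prove that $\mathcal{S}$ has no isolated point. Given $\Lambda \in \mathcal{S}$ and $R > 0$, I would truncate $\mathcal{G}_\Lambda$ to the ball of radius $R$ about its root; since $\mathcal{G}_\Lambda$ is infinite and of bounded degree, for suitable $R$ this ball is a transitive, non-saturated preaction $\alpha$ (some boundary vertex is missing an incident edge). Applying Lemma \ref{maxforest} to $\alpha$ produces a genuine subgroup whose $(m,n)$-graph extends $\mathcal{G}_\alpha$ by an \emph{infinite} forest; it thus lies in $\mathcal{S}$, agrees with $\Lambda$ on the ball of radius $R$, and converges to $\Lambda$ as $R \to \infty$ (first in the topology $\mathcal{T}_{sat}$, hence in the coarser Chabauty topology). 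The crux is to guarantee that such approximants can be chosen distinct from $\Lambda$: I would exploit the freedom in re-saturating the frontier edges, where at each frontier edge one may either open a fresh forest branch or glue back consistently with the degree bounds and the Transfer Equation \eqref{transferequation}. Making independent admissible choices at infinitely many frontier edges yields a Cantor set of pairwise distinct subgroups of $\mathcal{S}$ inside every neighborhood of $\Lambda$, so $\Lambda$ is not isolated. The delicate point, and the step I expect to be the real obstacle, is verifying that at least one admissible continuation genuinely differs from $\Lambda$'s own while respecting the arithmetic constraints; the infinitude of the forest $\mathcal{F}$ in Lemma \ref{maxforest} is exactly what provides the room to do so. Combining the three properties as above gives $\mathcal{S} = \mathcal{K}(\BS(m,n))$.
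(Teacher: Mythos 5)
A preliminary remark: the paper does not prove this theorem --- it is recalled from \cite{solitar} without proof --- so there is no in-paper argument to compare yours against; I can only assess your proposal on its own terms. Your reduction of the statement to three properties of $\mathcal{S}=\{\Lambda \mid \mathcal{G}_{\Lambda} \text{ infinite}\}$ (Chabauty-closed, co-countable, without isolated points), together with the formal deduction $\mathcal{S}=\mathcal{K}(\BS(m,n))$, is the right skeleton. Your closedness argument (finite quotient graph implies finitely generated via Bass--Serre, hence $\Lambda\leq\Lambda_i$ eventually, hence a surjection $\Lambda\backslash\mathcal{T}_{m,n}\twoheadrightarrow\Lambda_i\backslash\mathcal{T}_{m,n}$) and your co-countability argument are both correct and complete.

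The gap is in the perfectness step, and it is not a detail you can defer: it is the actual content of the theorem. Two points. First, the approximant produced by Lemma~\ref{maxforest} alone is useless on its own: if $\Lambda$ is finitely generated with infinite $(m,n)$-graph, then by Remark~\ref{maxforestsubgroup} $\Lambda$ \emph{is} the maximal forest saturation of its own $R$-ball for all large $R$, so that construction just returns $\Lambda$. You therefore genuinely need the ``vary the saturation at infinitely many sites'' part, producing uncountably many pairwise non-isomorphic saturated extensions with the same $R$-ball (then co-countability of $\mathcal{S}^c$ finishes, since at most one of them is $\Lambda$ and only countably many lie outside $\mathcal{S}$). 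Second, you never verify that at a given non-saturated vertex there exist at least \emph{two} admissible local continuations compatible with the degree bounds and the Transfer Equation~\eqref{transferequation}, and this is exactly where the hypothesis $\min(|m|,|n|)>1$ --- which your argument never invokes --- must enter: for $|m|=1$ a vertex labeled $N$ admits at most $N\wedge 1=1$ incoming edge, the local extension problem becomes essentially rigid, and the stated equality fails for $\BS(1,n)$. So the step you flag as ``the real obstacle'' is indeed the theorem; as written, your proposal establishes the topological framework and the inclusion $\mathcal{K}(\BS(m,n))\subseteq\mathcal{S}$, but not the reverse inclusion.
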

	
	To understand the dynamics induced by the action by conjugation, the authors introduced the notion of \textbf{phenotype}. This is a function $\Ph_{m,n} : \mathbb{N}^* \sqcup \{\infty\} \to \mathbb{N}^* \sqcup \{\infty\}$ which is constant on the set of labels of any connected $(m,n)$-graph. It is defined by \begin{equation}\label{phenotype}\Ph_{m,n}(N) :=\left\{
    \begin{array}{ll}
          \prod_{p \in \mathcal{P}, |m|_p = |n|_p \text{ \ and \ } |N|_p > |n|_p}p^{|N|_p} & \mbox{if } N \in \mathbb{N}^* \\
        \infty & \mbox{if } N = \infty.
    \end{array}
\right.
    \end{equation}
    The phenotype of a connected $(m,n)$-graph $\mathcal{G}$ is defined as follows: denoting by $N$ any label of $\mathcal{G}$, then $\bm{\Ph}_{m,n}\left(\mathcal{G}\right) = \Ph_{m,n}(N)$.

	The phenotype $\bm{\Ph}_{m,n}(\Lambda)$ (\textit{resp.} $\bm{\Ph}_{m,n}(\alpha)$) of a subgroup $\Lambda \leq \BS(m,n)$ (\textit{resp.} of a preaction $\alpha$) is then the phenotype of its $(m,n)$-graph. As the labels of the $(m,n)$-graph of $\Lambda$ encode the intersections of the conjugates of $\Lambda$ with $\langle b \rangle$, one also has $\bm{\Ph}_{m,n}(\Lambda) = \Ph_{m,n}\left(\left[\langle b \rangle : \Lambda \cap \langle b \rangle\right]\right)$ (as defined in \eqref{phenotype}). 
	
	The authors of \cite{solitar} proved that this quantity is invariant under conjugation and that the set $\mathcal{Q}_{m,n} = \bm{\Ph}_{m,n}(\Sub(\Gamma))$ is infinite. Thus, we get an infinite countable invariant partition of $$\Sub(\BS(m,n)) = \bigsqcup_{P \in \mathcal{Q}_{m,n}}\bm{\Ph}_{m,n}^{-1}(P),$$ which yields a partition of the perfect kernel \[\mathcal{K}(\BS(m,n)) = \bigsqcup_{P \in \mathcal{Q}_{m,n}}\mathcal{K}(\BS(m,n)) \cap \bm{\Ph}_{m,n}^{-1}(P).\]
    \begin{notation}
        For any $P \in \mathcal{Q}_{m,n}$, we denote by $\mathcal{K}_P$ the intersection $\mathcal{K}(\BS(m,n)) \cap \bm{\Ph}_{m,n}^{-1}(P) = \{\Lambda \in\bm{\Ph}_{m,n}^{-1}(P) \mid \Lambda \backslash \mathcal{T}_{m,n} \text{ is infinite}\}$.
    \end{notation}
    Moreover, they proved the following result:
	
	\begin{theorem}
		Let $(m,n) \in \left(\mathbb{Z} \smallsetminus \{0\}\right)^2$ such that $|m| \neq 1$ and $|n| \neq 1$. Then, for any $P \in \mathcal{Q}_{m,n}$: \begin{itemize}
        	\item if $P \in \mathbb{N}$, the piece $\bm{\Ph}_{m,n}^{-1}(P)$ is open;
			\item it is also closed if and only if $|m|=|n|$;
			\item the piece $\bm{\Ph}_{m,n}^{-1}(\infty)$ is closed;
			\item $\mathcal{K}_P$ is non-empty and the action by conjugation of $\BS(m,n)$ on it is topologically transitive.
		\end{itemize}  
	\end{theorem}
	In \cite{solitar2}, the same authors even proved that the action of $\BS(m,n)$ by conjugation on $\mathcal{K}_P$ is highly topologically transitive if $P \in \mathcal{Q}_{m,n}$.
	
	\begin{remark}\label{labelunimod} In the case where $|m|=|n|$, it is not hard to check that the phenotype of a non-zero integer $N$ satisfies the following equality: \[\Ph_{\pm n,n}(N)\left(\prod_{p \in \mathcal{P}, \left|\Ph_{\pm n,n}(N)\right|_p=0}p^{|n|_p}\right) = \frac{Nn}{N \wedge n}.\] 
    
    In particular, if $\alpha$ is a transitive and non-saturated preaction, denoting by $\beta$ its maximal forest saturation action, any vertex of $\mathcal{G}_{\beta} \smallsetminus \mathcal{G}_{\alpha}$ is labeled $\bm{\Ph}_{\pm n,n}(\mathcal{G}_{\alpha})\left(\prod_{p \in \mathcal{P}, \left|\bm{\Ph}_{\pm n,n}(\mathcal{G}_{\alpha})\right|_p=0}p^{|n|_p}\right)$.
    \end{remark}
	
	\section{Random walks on the Bass-Serre tree and on $(m,n)$-graphs}
	
	Let $m,n \in \mathbb{Z}$ such that $\min(|m|, |n|) > 1$. Let $\mathcal{T}_{m,n}$ be the Bass-Serre tree of the Baumslag-Solitar group $\Gamma := \BS(m,n)$ of parameters $(m,n)$. 
	
	\subsection{Proof of Theorem \ref{nonmixing}}
	
	In this section, we assume that $|m| \neq |n|$. The goal is to build a probability measure $\mu$ whose support is $\{b,b^{-1},t,t^{-1}\}$ such that, for every finite $P \in \mathcal{Q}_{m,n}$, the action by conjugation on $\mathcal{K}_P$ is not topologically $\mu$-mixing. 
	
	To prove Theorem \ref{nonmixing}, we will consider a probability measure supported on $\{b,b^{-1},t,t^{-1}\}$ satisfying $\mu(t) \neq \mu\left(t^{-1}\right)$. We will use the following deterministic result:
	
	\begin{proposition}\label{evanescent}
		Let $s_1, \ldots, s_r \in \{b,b^{-1},t,t^{-1}\}$. For every $i \in \llbracket 1,r \rrbracket$, let \[\mathfrak{h}_i^+ := \left|\{j \in \llbracket 1,i\rrbracket \mid s_j = t\}\right|\] and \[\mathfrak{h}_i^- = \left|\{j \in \llbracket 1,i\rrbracket \mid s_j = t^{-1}\}\right|\] (and $\mathfrak{h}_0^+ = \mathfrak{h}_0^- = 0$). Let $\alpha$ be a preaction on a pointed countable set $(X,x)$ and $p$ be a prime number such that \begin{itemize} 
        \item $|m|_p > |n|_p$;
        \item $\mathfrak{h}_i^+ > \mathfrak{h}_i^-$ for every $i \in \llbracket 1, r \rrbracket$;
        \item $b$ and $s_1$ are defined on $x$, and for every $i \in \llbracket 2, r \rrbracket$, the elements $b$ and $s_i$ are defined on $x \cdot s_1 \cdots s_{i-1}$;
        \item the cardinal $N$ of the $\langle b \rangle$-orbit of $x$ satisfies $|N|_p > |m|_p$.
        \end{itemize}

		Then, for every $i \in \llbracket 0, r \rrbracket$, the cardinality $N_i$ of the $\langle b \rangle$-orbit of $x \cdot s_1 \cdots s_i$ satisfies \[|N_i|_p = (\mathfrak{h}_i^+-\mathfrak{h}_i^-)(|m|_p-|n|_p) + |N|_p\]
        (where $N_0=N$ and $x \cdot s_1 \cdots s_i=x$ if $i=0$).
	\end{proposition}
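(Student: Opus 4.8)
The plan is to argue by induction on $i \in \llbracket 0, r \rrbracket$, tracking the $p$-adic valuation of the label of the vertex $p_\alpha(x \cdot g_1 \cdots g_i)$ of the $(m,n)$-graph $\mathcal{G}_\alpha$ as the word is read letter by letter. The base case $i=0$ is immediate, since $N_0 = N$ and $\mathfrak{h}_0^+ = \mathfrak{h}_0^- = 0$. For the inductive step I would distinguish the four possible values of $g_{i+1}$. When $g_{i+1} \in \{b, b^{-1}\}$ the vertices $p_\alpha(x \cdot g_1 \cdots g_{i+1})$ and $p_\alpha(x \cdot g_1 \cdots g_i)$ coincide, because $\beta$ preserves its own orbits; hence $N_{i+1} = N_i$ while $\mathfrak{h}_{i+1}^\pm = \mathfrak{h}_i^\pm$, and the formula is transported unchanged. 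All the content is therefore in the two cases $g_{i+1} \in \{t, t^{-1}\}$.

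These two cases correspond to traversing a positive edge of $\mathcal{G}_\alpha$: forwards when $g_{i+1} = t$ (so that the $\langle b \rangle$-orbit of $x \cdots g_i$ is the source and that of $x \cdots g_{i+1}$ the target), and backwards when $g_{i+1} = t^{-1}$ (so that the roles of source and target are exchanged). In either case I would apply the Transfer Equation \eqref{transferequation} and take $p$-adic valuations, using $|k \wedge n|_p = \min(|k|_p, |n|_p)$ and $|k \wedge m|_p = \min(|k|_p, |m|_p)$. The decisive input is the hypothesis $|N|_p > |m|_p > |n|_p$, which I would propagate as a strengthened induction hypothesis: namely that $|N_i|_p > |m|_p$ holds at every step. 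In this regime the minima degenerate to $\min(|N_i|_p, |n|_p) = |n|_p$ and $\min(|N_i|_p, |m|_p) = |m|_p$, so for a forward $t$-edge from source label $N_i$ to target label $N_{i+1}$ the valuated Transfer Equation reads $|N_{i+1}|_p - |m|_p = |N_i|_p - |n|_p$, giving $|N_{i+1}|_p = |N_i|_p + (|m|_p - |n|_p)$ (and one checks the left-hand side is positive, which forces $|N_{i+1}|_p > |m|_p$, so the regime persists).

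The symmetric computation for $g_{i+1} = t^{-1}$, where now $N_{i+1}$ is the source and $N_i$ the target, yields $|N_{i+1}|_p = |N_i|_p - (|m|_p - |n|_p)$. Since $\mathfrak{h}^+$ (resp. $\mathfrak{h}^-$) increases by exactly one precisely when the letter read is $t$ (resp. $t^{-1}$), these two identities are exactly what is needed to pass the displayed formula from $i$ to $i+1$, and together with the $b^{\pm 1}$ cases they close the induction and deliver $|N_i|_p = (\mathfrak{h}_i^+ - \mathfrak{h}_i^-)(|m|_p - |n|_p) + |N|_p$.

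I expect the main obstacle to be the case $g_{i+1} = t^{-1}$, which lowers the valuation and could in principle push it out of the high-valuation regime $|N_i|_p > |m|_p$ in which the Transfer Equation determines $|N_{i+1}|_p$ unambiguously. Indeed, had one only $|n|_p < |N_i|_p \leq |m|_p$, the valuated Transfer Equation would give the backward step merely the inequality $|N_{i+1}|_p \leq |n|_p$, rather than a determined value, and the formula could fail. Thus the heart of the argument is to guarantee that the invariant $|N_i|_p > |m|_p$ is genuinely maintained across the $t^{-1}$ steps; this is where the slack furnished by $|N|_p > |m|_p$, together with control on the net excess $\mathfrak{h}_i^- - \mathfrak{h}_i^+$ of $t^{-1}$'s read so far, must be used. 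Once this regime-maintenance is secured, the four-case induction goes through verbatim.
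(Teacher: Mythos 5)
Your plan follows exactly the same route as the paper's proof: induction on $i$, the trivial case $g_{i+1}\in\{b,b^{-1}\}$, and the Transfer Equation \eqref{transferequation} read through $p$-adic valuations in the regime $|N_i|_p>|m|_p>|n|_p$ for the cases $g_{i+1}=t^{\pm1}$. The two edge computations you give (forward: $|N_{i+1}|_p=|N_i|_p+(|m|_p-|n|_p)$; backward: $|N_{i+1}|_p=|N_i|_p-(|m|_p-|n|_p)$) are precisely those of the paper.

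The obstacle you flag at the end is genuine, and you are right that it is the crux: a $t^{-1}$ step only guarantees $|N_{i+1}|_p>|n|_p$, and once $|N_i|_p\leq|m|_p$ a further $t^{-1}$ step gives merely $|N_{i+1}|_p\leq|n|_p$, so the value is no longer determined and the formula can fail. You leave this unresolved (``this is where the slack \dots must be used''), and indeed it cannot be resolved from the hypotheses as literally stated. The paper closes it by invoking ``the assumption made on $\mathfrak{h}_i^+,\mathfrak{h}_i^-$'', namely $\mathfrak{h}_i^+\geq\mathfrak{h}_i^-$ for all $i$ --- a hypothesis that does not appear in the statement of the proposition but does hold in its only application, where the sequence $(g_k)$ is chosen with $\mathfrak{h}_k^+>\mathfrak{h}_k^-$ for every $k$. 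Note that once this sign condition is granted, no separately strengthened invariant is needed: the induction hypothesis itself yields
\[|N_i|_p=(\mathfrak{h}_i^+-\mathfrak{h}_i^-)(|m|_p-|n|_p)+|N|_p\geq|N|_p>|m|_p>|n|_p,\]
which is exactly the regime required at step $i+1$, and your four-case induction then closes verbatim. So: same approach as the paper, with the single step you left open being the one place where this (unstated but intended) hypothesis must be used.
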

	
	\begin{proof}
		We proceed by induction on $i \in \llbracket 0, r \rrbracket$. 
		
		\paragraph{\textbf{Base case}} The result is clear for $i=0$.
		\paragraph{\textbf{Induction step}} Let us assume that $i \in \llbracket 0, r-1 \rrbracket$ and that \[|N_i|_p = (\mathfrak{h}_i^+-\mathfrak{h}_i^-)(|m|_p-|n|_p) + |N|_p.\]
		We distinguish three cases:
		
		\subparagraph{\textbf{1st case: $s_{i+1} \in \{b,b^{-1}\}$}} In this case, the $\langle b \rangle$-orbits of $x \cdot s_1 \cdots s_i$ and $x \cdot s_1 \cdots s_{i+1}$ are the same. Thus, $N_{i+1} = N_i$. Moreover, $\mathfrak{h}_{i+1}^+ = \mathfrak{h}_i^+$ and $\mathfrak{h}_{i+1}^- = \mathfrak{h}_i^-$, thus $|N_{i+1}|_p = (\mathfrak{h}_{i+1}^+-\mathfrak{h}_{i+1}^-)(|m|_p-|n|_p)+|N|_p$.\\ 
		
		\subparagraph{\textbf{2nd case: $s_{i+1} = t$}} In this case, one has $\mathfrak{h}_{i+1}^+ = \mathfrak{h}_i^++1$ and $\mathfrak{h}_{i+1}^- = \mathfrak{h}_i^-$. By the Transfer Equation \eqref{transferequation}, one has $\frac{N_i}{N_i \wedge n} = \frac{N_{i+1}}{N_{i+1} \wedge m}$, which implies that \begin{equation}\label{transfert}
			|N_i|_p - \min(|N_i|_p, |n|_p) = |N_{i+1}|_p - \min(|N_{i+1}|_p,|m|_p).
		\end{equation}
		By the induction hypothesis \begin{align*}
			|N_i|_p &= (\mathfrak{h}_i^+-\mathfrak{h}_i^-)(|m|_p-|n|_p) + |N|_p \\
			&\geq |N|_p \text{ \ by the assumption made on $\mathfrak{h}_i^+, \mathfrak{h}_i^-$ and the assumption made on $p$} \\
			&> |m|_p \text{ \ by the assumption made on $N$} \\
			&> |n|_p \text{ \ by the assumption made on $p$}.
		\end{align*}
		Thus by Equation \eqref{transfert}: \begin{align*}
			|N_{i+1}|_p - \min(|N_{i+1}|_p,|m|_p) &= |N_i|_p - |n|_p \\
			&> 0
		\end{align*}
		so necessarily \begin{align*}
			|N_{i+1}|_p &= |N_i|_p+|m|_p-|n|_p \\
			&= (\mathfrak{h}_i^+-\mathfrak{h}_i^-+1)(|m|_p-|n|_p) + |N|_p \\
			&= (\mathfrak{h}_{i+1}^+-\mathfrak{h}_{i+1}^-)(|m|_p-|n|_p) + |N|_p,
		\end{align*}
		which concludes the induction step in the case where $s_{i+1} = t$. \\
		
		\subparagraph{\textbf{3rd case: $s_{i+1} = t^{-1}$}} In particular, $\frac{N_i}{N_i \wedge m} = \frac{N_{i+1}}{N_{i+1} \wedge n}$ and $\left(\mathfrak{h}_{i+1}^+,\mathfrak{h}_{i+1}^-\right) = \left(\mathfrak{h}_{i}^+,\mathfrak{h}_{i}^-+1\right)$. This last case is very similar, so we leave it to the reader. 
	\end{proof}
	
	Before proving Theorem \ref{nonmixing}, let us recall some basic facts about one dimensional random walks. 
	
	\begin{proposition}\label{randomwalk}
		Let $(X_i)_{i \in \mathbb{N}}$ be a sequence of independently and identically distributed random variables, valued in $\{-1,0,1\}$, such that $\mathbb{P}(X_i=1) > \mathbb{P}(X_i=-1)$. Let us denote by $Z_n := \sum_{i=1}^nX_i$ (where $Z_0=0$), and by $p_+ := \mathbb{P}(X_i=1)$ and $p_- := \mathbb{P}(X_i=-1)$. Then \begin{enumerate}
			\item $\mathbb{P}\left(\lim_{n \to \infty}\frac{1}{n}Z_n = p_+-p_-\right) = 1$;
			\item $\mathbb{P}(Z_n > 0, \ \forall n > 0) = p_+-p_-$.
		\end{enumerate}
	\end{proposition}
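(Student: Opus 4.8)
The plan is to treat the two assertions separately: the first is a direct application of the law of large numbers, while the second rests on a first-passage analysis that exploits the fact that the walk is skip-free to the left (its downward jumps have size exactly $1$).

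For assertion (1), I would observe that the increments $X_i$ are i.i.d.\ and bounded, since they take values in $\{-1,0,1\}$; they are therefore integrable, with common expectation $\mathbb{E}[X_1] = 1\cdot p_+ + (-1)\cdot p_- + 0\cdot(1-p_+-p_-) = p_+-p_-$. The strong law of large numbers then gives $\frac{1}{n}Z_n \to p_+-p_-$ almost surely, which is exactly the claim.

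For assertion (2), the key structural remark is that the walk decreases only in steps of size $1$: if $Z_k<0$ for some $k$, then the first such index satisfies $Z_k=-1$. Consequently the event $\{Z_n>0 \text{ for all } n>0\}$ forces $X_1=1$ (probability $p_+$, landing at height $1$), after which, by spatial homogeneity and the Markov property, the remaining requirement is that a fresh copy of the walk started at $0$ never reach $-1$. Writing $\rho$ for the probability that the walk started at $0$ ever reaches $-1$, a first-step decomposition yields $\rho = p_+\rho^2 + (1-p_+-p_-)\rho + p_-$; here the term $p_+\rho^2$ reflects that after an up-step one must descend two levels, and I would justify that descending $k$ levels has probability $\rho^k$ via the strong Markov property together with spatial homogeneity and skip-freeness. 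This relation rewrites as $(\rho-1)(p_+\rho-p_-)=0$, so $\rho\in\{1,\, p_-/p_+\}$, and the target probability is $p_+(1-\rho)$.

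The only genuine subtlety, and the step I expect to be the main obstacle, is selecting the correct root. Here I would feed assertion (1) back in: since $p_+>p_-$ the drift $p_+-p_-$ is positive, so $Z_n\to+\infty$ almost surely, whence $\inf_n Z_n>-\infty$ almost surely and $\mathbb{P}(\text{the walk ever reaches } -k)=\rho^k\to 0$ as $k\to\infty$. This forces $\rho<1$, hence $\rho=p_-/p_+$. The desired probability is therefore $p_+\left(1-\frac{p_-}{p_+}\right)=p_+-p_-$, completing the proof.
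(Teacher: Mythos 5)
Your proof is correct and follows essentially the same route as the paper: the strong law of large numbers for (1), and for (2) a first-step decomposition combined with skip-freeness, yielding the same quadratic $p_+\rho^2-(p_++p_-)\rho+p_-=0$ whose spurious root $\rho=1$ is excluded by appealing to the transience supplied by (1). The only (immaterial) difference is in how that root is discarded: the paper shows the target probability is positive by conditioning on the last visit of $Z$ to $0$, whereas you observe that $\rho^k=\mathbb{P}\left(\inf_n Z_n\leq -k\right)\to 0$; both arguments are valid.
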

	
	\begin{proof}
		
		The first item results from the strong law of large numbers. 
		
		For the second one, notice that, denoting by $p_{k,l}$ the probability that $Z_n$ reaches $l$, starting from $k > l$, one has \begin{align*}
			p_{1,0} &= \mathbb{P}(X_1=-1) + \mathbb{P}(X_1=0)p_{1,0} + \mathbb{P}(X_1=1)p_{2,0}
		\end{align*}
		As any realization of $(Z_n)_{n \in \mathbb{N}}$ starting from $2$ and reaching $0$ has to reach $1$ before $0$, and  $(Z_n)_{n \in \mathbb{N}}$ is a Markov chain, one has $p_{2,0} = p_{2,1}p_{1,0}$. Noticing that $p_{k,l} = p_{k-l,0}$ for any $k > l$, one deduces that $p_{1,0}$ satisfies the following equation:
		\[p_+p_{1,0}^2 - (p_++p_-)p_{1,0} + p_- = 0\]
		which leads to $p_{1,0} = \frac{p_-}{p_+}$ or $p_{1,0} = 1$. Thus \begin{equation}\label{pos}\begin{split}
			\mathbb{P}(Z_n > 0, \ \forall n > 0) &= \mathbb{P}(X_1=1)(1-p_{1,0}) \\
			&= \left\{
    \begin{array}{ll}
        p_+-p_- & \mbox{if } p_{1,0} = \frac{p_-}{p_+} \\
        0 & \mbox{if } p_{1,0} = 1.
    \end{array}
\right.\end{split}
		\end{equation}
        We want to show that $\mathbb{P}(Z_n > 0, \ \forall n > 0)=p_+-p_-$.
        The first item of the proposition implies that \[\mathbb{P}(\exists n_0 \in \mathbb{N}, Z_{n_0}=0 \text{ \ and \ } Z_n > 0, \forall n>n_0) = 1.\] In particular, there exists $n_0 \in \mathbb{N}$ such that \[\mathbb{P}(Z_{n_0}=0 \text{ \ and \ } Z_n > 0, \forall n>n_0) > 0.\]
        In particular, \begin{align*}
            \mathbb{P}(Z_n >0, \forall n>0) &= \mathbb{P}\left(Z_n >0, \forall n>n_0 \mid Z_{n_0}=0\right) \\
            &= \frac{\mathbb{P}(Z_{n_0}=0 \text{ \ and \ } Z_n > 0, \forall n>n_0)}{\mathbb{P}(Z_{n_0}=0)} \\
            &>0.
        \end{align*}
        Thus, Equation \eqref{pos} implies that \[\mathbb{P}(Z_n > 0, \ \forall n > 0) = p_+-p_-,\] which concludes the proof. \end{proof}
	
	We are now ready to prove Theorem \ref{nonmixing}, which is a particular case of the following theorem: 
	\begin{theorem}
		Let us assume that $|m|\neq|n|$. Let $p$ be a prime number such that $|m|_p \neq |n|_p$.
		Let $\mu : \Gamma \to [0,1]$ be a probability measure that satisfies the following properties: \begin{itemize}
			\item $\Supp(\mu) = \{b, b^{-1}, t, t^{-1}\}$;
			\item $\mu(t) > \mu(t^{-1})$ if $|m|_p > |n|_p$;
			\item $\mu(t) < \mu(t^{-1})$ if $|m|_p < |n|_p$.
		\end{itemize}
		Let $P \in \mathcal{Q}_{m,n} \cap \mathbb{N}$. Then, the action by conjugation on $\mathcal{K}_P$ is not topologically $\mu$-mixing.
	\end{theorem}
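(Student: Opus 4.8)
The plan is to exhibit one nonempty open set $W\subseteq\bm{\Ph}_{m,n}^{-1}(P)$, take $U=V=W$, and show that with a probability bounded away from $0$ uniformly in $k$ the re-rooted subgroup $S_k\cdot\Lambda$ leaves $W$ \emph{simultaneously for every} $\Lambda\in W$. Up to replacing $\BS(m,n)$ by $\BS(n,m)$ through the isomorphism fixing $b$ and sending $t\mapsto t^{-1}$ (which preserves the phenotype, since $\Ph_{m,n}=\Ph_{n,m}$, and exchanges the weights $\mu(t)$ and $\mu(t^{-1})$), I may assume $|m|_p>|n|_p$, so that the hypothesis reads $\mu(t)>\mu(t^{-1})$, and I may use Proposition \ref{evanescent} as stated. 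Fix an integer $v_0>|m|_p$ and put $N_*:=P\,p^{v_0}$. Since $|m|_p\neq|n|_p$ the prime $p$ never contributes to the phenotype, so $\Ph_{m,n}(N_*)=P$ while $|N_*|_p=v_0$; hence the cyclic subgroup $\langle b^{N_*}\rangle$ has root label $N(\langle b^{N_*}\rangle)=[\langle b\rangle:\langle b\rangle\cap\langle b^{N_*}\rangle]=N_*$, phenotype $P$, and infinite $(m,n)$-graph, so it lies in $\mathcal{K}(\BS(m,n))\cap\bm{\Ph}_{m,n}^{-1}(P)$. I set
\[W:=\bigl\{\Lambda\in\mathcal{K}(\BS(m,n))\cap\bm{\Ph}_{m,n}^{-1}(P)\ \big|\ |N(\Lambda)|_p=v_0\bigr\},\qquad U=V=W,\]
which is Chabauty-open (a union over $N$ with $|N|_p=v_0$ of the clopen conditions $N(\Lambda)=N$) and nonempty (it contains $\langle b^{N_*}\rangle$).

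Next I track the $p$-adic valuation of the root label under the walk. Conjugation by $S_k=G_1\cdots G_k$ re-roots the (genuine, saturated) Schreier graph of $\Lambda$ along the path labelled by $G_1,\dots,G_k$, so that $N(S_k\cdot\Lambda)$ is the size of the $\langle b\rangle$-orbit of $x_0\cdot G_1\cdots G_k$; in particular all of $b,t,t^{-1}$ are defined along the path, so the hypotheses of Proposition \ref{evanescent} hold at the root since $|N(\Lambda)|_p=v_0>|m|_p$. Writing $X_i=+1,-1,0$ according as $G_i=t$, $G_i=t^{-1}$, or $G_i\in\{b,b^{-1}\}$, and $Z_k=\sum_{i=1}^kX_i=\mathfrak{h}_k^+-\mathfrak{h}_k^-$, Proposition \ref{evanescent} gives that \emph{as long as the valuation stays above $|m|_p$} it depends only on the path, namely $|N(S_k\cdot\Lambda)|_p=Z_k(|m|_p-|n|_p)+v_0$, the \emph{same} value for every $\Lambda\in W$. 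The $X_i$ are i.i.d.\ with $\mathbb{P}(X_i=1)=\mu(t)>\mu(t^{-1})=\mathbb{P}(X_i=-1)$, so $Z_k$ is a biased one-dimensional walk of the kind treated in Proposition \ref{randomwalk}.

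Now consider the event $B=\{Z_n>0\text{ for all }n>0\}$. By Proposition \ref{randomwalk}(2), $\mathbb{P}(B)=\mu(t)-\mu(t^{-1})>0$, independently of $k$. On $B$ one has $Z_j\geq 0$ for all $j\geq 0$, and the induction of Proposition \ref{evanescent} closes: whenever a letter $t^{-1}$ is read at step $j+1$ we have $Z_j=Z_{j+1}+1\geq 1$, so the valuation before that step is $\geq v_0+(|m|_p-|n|_p)>|m|_p$, keeping us in the deterministic regime throughout. Hence the formula is valid for all $k$, and $Z_k\geq 1$ forces $|N(S_k\cdot\Lambda)|_p\geq v_0+(|m|_p-|n|_p)>v_0$ for every $k\geq 1$ and every $\Lambda\in W$. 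Thus on $B$ no $\Lambda\in U$ satisfies $S_k\cdot\Lambda\in V$, i.e.\ $S_k\cdot U\cap V=\emptyset$ for all $k\geq 1$. Consequently $\mathbb{P}(S_k\cdot U\cap V\neq\emptyset)\leq 1-\bigl(\mu(t)-\mu(t^{-1})\bigr)<1$ for all $k$, so the limit cannot be $1$ and the action on $\bm{\Ph}_{m,n}^{-1}(P)$ is not topologically $\mu$-mixing.

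The delicate point, and the reason for proceeding this way, is that Proposition \ref{evanescent} pins down $|N(S_k\cdot\Lambda)|_p$ from the path alone only while the valuation stays above $|m|_p$; once it enters the range $\leq|m|_p$ the transfer equation no longer determines it and its evolution becomes genuinely $\Lambda$-dependent. I therefore do not attempt to show the escape probability tends to $1$ (which would require controlling every trajectory of the re-rooting, including those that dip into the indeterminate low-valuation regime), but only to bound it below by the positive constant $\mu(t)-\mu(t^{-1})$: restricting to the stay-positive event $B$ is exactly what keeps every trajectory, and uniformly every $\Lambda\in W$, in the deterministic high-valuation regime, and the sign hypothesis on $\mu$ is precisely what makes $B$ have positive probability while driving the valuation strictly upward.
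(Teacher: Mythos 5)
Your proof is correct and follows essentially the same route as the paper's: the same measure with $\mu(t)\neq\mu(t^{-1})$, the same tracking of the $p$-adic valuation of the root label via Proposition \ref{evanescent}, and the same stay-positive event of probability $\mu(t)-\mu(t^{-1})$ from Proposition \ref{randomwalk}. The only (harmless) variation is that by taking $U=V=W$ with the valuation pinned exactly at $v_0$ you get $S_k\cdot U\cap V=\emptyset$ for \emph{all} $k\geq 1$ on that event and can dispense with the law of large numbers, whereas the paper takes $U=U_N$, $V=U_M$ arbitrary and uses the strong law to show the valuation eventually exceeds $|M|_p$; your explicit remark that the formula of Proposition \ref{evanescent} is only guaranteed while the valuation stays above $|m|_p$ (which the event $B$ ensures) is a welcome precision.
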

	
	\begin{proof}
		Up to exchanging $m$ and $n$, let us assume that $|m|_p > |n|_p$.
		Let $N \in \Ph_{m,n}^{-1}(P)$ such that $|N|_p > |m|_p$. Such $N$ exists, because $\Ph_{m,n}(Np^r) = \Ph_{m,n}(N)$ for every $r \in \mathbb{N}$. Let $G_k = S_1 \cdots S_k$ be the random walk with step distribution $\mu$. Let us define a sequence of random variables $(X_i)_{i \in \mathbb{N}}$, valued in $\{-1,0,1\}$, as follows: 
		\[X_i = \left\{
		\begin{array}{ll}
			1 & \mbox{if \ } S_i=t;\\
			-1 & \mbox{if \ } S_i=t^{-1}; \\
			0 & \mbox{otherwise,} 
		\end{array}
		\right.\]
		and notice that the variables $X_i$ are iid. Their law is given by \begin{itemize}
			\item $\mathbb{P}(X_i=1) = \mu(t)$;
			\item $\mathbb{P}(X_i=-1) = \mu\left(t^{-1}\right)$;
			\item $\mathbb{P}(X_i=0) = \mu(b)+\mu(b^{-1}) =1 - \left(\mu(t)+\mu\left(t^{-1}\right)\right).$
		\end{itemize}
		For every $k \in \mathbb{N}$, one has:
		\begin{align*}
			\sum_{i=1}^kX_i &= \sum_{i=1}^k {\mathbb{1}}_{\{S_i=t\}} - \mathbb{1}_{\{S_i=t^{-1}\}} \\
			&= \left|\{i \in \llbracket 1,k \rrbracket \mid S_i = t\}\right| - \left|\{i \in \llbracket 1,k \rrbracket \mid S_i = t^{-1}\}\right|.
		\end{align*}
		Thus, by Lemma \ref{randomwalk} applied to the sequence $(X_i)_{i \in \mathbb{N}}$, we get the two following equalities: \begin{enumerate} \item \begin{align*}\mathbb{P}\left(\left|\{i \in \llbracket 1,k \rrbracket \mid S_i = t\}\right| > \left|\{i \in \llbracket 1,k \rrbracket \mid S_i = t^{-1}\}\right|, \ \forall k \in \mathbb{N} \right)&= \mathbb{P}\left(\sum_{i=1}^kX_i > 0, \forall k \in \mathbb{N}\right) \\
			&=\mu(t) - \mu\left(t^{-1}\right) \\ &> 0.\end{align*}
		\item \begin{align*}&\mathbb{P}\left(\lim_{k \to \infty}\frac{\left|\{i \in \llbracket 1,k \rrbracket \mid S_i = t\}\right| - \left|\{i \in \llbracket 1,k \rrbracket \mid S_i = t^{-1}\}\right|}{k} = \mu(t) - \mu\left(t^{-1}\right)\right)\\
			&=\mathbb{P}\left(\lim_{k \to \infty}\frac{\sum_{i=1}^kX_i}{k} = \mu(t) - \mu\left(t^{-1}\right)\right)\\
			&=1.\end{align*} \end{enumerate}
		For now, let us argue deterministically. Let $(s_k)_{k \in \mathbb{N}} \in \{b,b^{-1},t,t^{-1}\}$ be a sequence of elements satisfying these conditions, \textit{i.e.} with the notations of Proposition \ref{evanescent}:
		\begin{enumerate}
			\item $\mathfrak{h}_k^+ > \mathfrak{h}_k^-$ for every $k \in \mathbb{N}^*$;
			\item $\lim_{k \to \infty}\frac{\mathfrak{h}_k^+-\mathfrak{h}_k^-}{k} = \mu(t) - \mu\left(t^{-1}\right)$
		\end{enumerate}
		and let $g_k = s_1 \cdots s_k$.
		For any subgroup $\Lambda \leq \Gamma$ such that $\Lambda \cap \langle b \rangle = \left\langle b^N \right\rangle$, one has $g_k^{-1} \Lambda g_k \cap \langle b \rangle = \left\langle b^{N_k} \right\rangle$ where \begin{align*}|N_k|_p &= (\mathfrak{h}_k^+-\mathfrak{h}_k^-)(|m|_p-|n|_p)+|N|_p \text{ \ by the first condition and Proposition \ref{evanescent}}\\
			&> k\frac{\mu(t)-\mu\left(t^{-1}\right)}{2} \text{ \ for $k \geq k_0$ large enough by the second condition} \end{align*}
		(where $k_0$ only depends on $(s_k)_{k \in \mathbb{N}}$). 

        For any $M \in \Ph^{-1}_{m,n}(P)$, let us introduce the open subset $U_M = \left\{\Lambda \in \mathcal{K}(\Gamma) \mid \Lambda \cap \langle b \rangle = \left\langle b^M\right\rangle\right\}$ of $\mathcal{K}_P$ (which is non-empty, because it contains $\left\langle b^M \right\rangle$). One has \[g_k^{-1}U_Ng_k \cap U_M = \emptyset \text{ \ as soon as $k > \max\left(k_0, \frac{2|M|_p}{\mu(t) - \mu\left(t^{-1}\right)}\right)$}.\]
		
		Thus, for any $M, N \in \Ph_{m,n}^{-1}(P)$ such that $|N|_p > |m|_p$:
		\[\mathbb{P}\left( \exists k_0: G_k^{-1}U_NG_k \cap U_M = \emptyset, \ \forall k \geq k_0\right)  \geq \mu(t) - \mu\left(t^{-1}\right),\]
		which is a strong negation of being topologically $\mu$-mixing for the action by conjugation on $\bm{\Ph}_{m,n}^{-1}(P)$. 
	\end{proof}
	
	\subsection{Proof of Theorem \ref{mixing}}
	
	In this section, we consider a probability measure $\mu : \Gamma \to [0,1]$ whose support is bounded, symmetric, and generates $\Gamma$. Throughout the section, we denote by $(G_k)_{k \in \mathbb{N}} = (S_1 \cdots S_k)_{k \in \mathbb{N}}$ the random walk on $G$ with step distribution $\mu$. The ultimate goal will be the proof of Theorem~\ref{mixing}. Namely, starting from two finitely generated subgroups $\Lambda_1, \Lambda_2$ such that \begin{itemize}
	    \item either $P := \bm{\Ph}_{m,n}(\Lambda_i) = \infty$ for $i \in \{1,2\}$;
        \item or $|m|=|n|$ and $P:= \bm{\Ph}_{m,n}(\Lambda_1) = \bm{\Ph}_{m,n}(\Lambda_2)$,
	\end{itemize}
and fixing a finite subgraph $F_i$ of $\Sch(\Lambda_i)$ that contains the base point $x_i$ (for $i \in \{1,2\}$), then, with probability tending to $1$ as $k$ tends to $\infty$, we will prove the existence of some $\Lambda \in \mathcal{K}_P$ \begin{itemize}
    \item whose Schreier graph contains $F_1$ and $F_2$;
    \item such that, in the Schreier graph of $\Lambda$, one has $x_1 \cdot S_1 \cdots S_k = x_2$.
\end{itemize}
We now explain the outline of the proof. First of all, for $i \in \{1,2\}$, we consider the smallest subgraph $F_i^{sat}$ of $\Sch(\Lambda_i)$ that contains $F_i$, and that is the $(m,n)$-Schreier graph of some preaction $\alpha_i$ (namely, all $\langle b \rangle$-orbits are saturated, and, whenever $t$ (\textit{resp.} $t^{-1}$) is defined on some point, it is defined on its whole $\langle b^m \rangle$-orbit (\textit{resp.} $\langle b^n \rangle$-orbit)). In particular, $\mathcal{G}_{\alpha_i}$ is finite. Then, the proof works in two steps: 
\paragraph{\textbf{1st step}: Escaping the compact core (Subsection~\ref{subsec: escaping})} We prove that, almost surely, there exists some $k_0 \in \mathbb{N}$ such that, for all $k > k_0$, the projection of $x_1 \cdot G_k$ in the $(m,n)$-graph of the maximal forest saturation $\beta_1$ of $\alpha_1$ remains very far from $\mathcal{G}_{\alpha_1}$ (Corollary~\ref{convloinquotient}). To achieve this, we prove that \begin{enumerate}
    \item with non-zero probability, the projection of $x_1 \cdot G_k$ escapes $\mathcal{G}_{\alpha_1}$ (Lemma~\ref{sort});
    \item once it is far enough from $\mathcal{G}_{\alpha_1}$, the probability that it remains outside forever is bounded below by a positive number (Lemma~\ref{lem: borel cantelli}). This is where we use the main result of \cite{end}; namely, the fact that the random walk applied to any vertex of $\mathcal{T}_{m,n}$ converges almost surely to a random end of $\mathcal{T}_{m,n}$. 
\end{enumerate}

\paragraph{\textbf{2nd step}: Pasting preactions (Subsection \ref{subsec: paste})} The same argument applied to $\alpha_2$ (and its maximal forest saturation $\beta_2$) and the reversed random walk (\textit{i.e.} the random walk on $\Gamma$ with step distribution $\mu^{-1}$) leads to some integers $k_0, k_1, k_2 \in \mathbb{N}$ such that $k_0 > k_1+k_2$ and, for all $k > k_0$ \begin{itemize}
    \item the projection of $x_1 \cdot S_1 \cdots S_{i}$ in $\mathcal{G}_{\beta_1}$ remains very far from $\mathcal{G}_{\alpha_1}$ for every $i \in \llbracket k_1+1, k \rrbracket$;
    \item the projection of $x_2 \cdot S_k^{-1} \cdots S_{i}^{-1}$ in $\mathcal{G}_{\beta_2}$ remains very far from $\mathcal{G}_{\alpha_2}$ for every $i \in \llbracket 1, k-k_2 \rrbracket$;
    \item the distance between the projections of $x_1$ and of $x_1 \cdot S_1 \cdots S_{k-k_2}$ in $\mathcal{G}_{\beta_1}$ is very large
\end{itemize}
with high probability. For any realization $(s_i)_{i \in \mathbb{N}}$ of the random walk that satisfies the three previous conditions, denoting by $\beta_1'$ (\textit{resp.} $\beta_2'$) the subpreaction of $\beta_1$ (\textit{resp.} $\beta_2$) induced on $F_1^{sat}$ $\cup$ $\{p_{\beta_1}^{-1}(p_{\beta_1}(x_1 \cdot u)) \mid u \text{ subword of $s_1 \cdots s_{k_1}$ }\}$ (\textit{resp.} $F_2^{sat} \cup \left\{p_{\beta_2}^{-1}(p_{\beta_2}(x_2 \cdot u)) \mid u \text{ subword of}\right.$ \\ $\left. s_k^{-1} \cdots s_{k-k_2+1}^{-1}\right\}$)  we build a preaction that extends both $\beta_1'$ and $\beta_2'$, and such that, denoting by $y_1 = x_1 \cdot s_1 \cdots s_{k_1}$ (\textit{resp.} $y_2 = x_2 \cdot s_k^{-1} \cdots s_{k-k_2+1}^{-1}$), one has \[y_1 \cdot s_{k_1+1} \cdots s_{k-k_2} = y_2.\] 
While defining this preaction, one has to be careful with cancellations $tt^{-1}$, $tb^mt^{-1}b^{-n}$, \textit{etc.} The three assumptions ensure that there are not too many such cancellations. This is achieved in Lemma~\ref{merge}.

	\subsubsection{Escaping the compact core}\label{subsec: escaping} The goal of this subsection is the proof of Corollary~\ref{convloinquotient}. We first argue on the level of the Bass-Serre tree. Once and for all, we denote by $v := \langle b \rangle$ the base vertex of $\mathcal{T}_{m,n}$.

    The following lemma tells us that the random walk escapes any ``small enough'' subtree of $\mathcal{T}_{m,n}$ almost surely.
	
	\begin{lemma}\label{sort}
		Let $\Lambda$ be a subgroup of $\Gamma$ that acts cocompactly on a proper subtree $\mathcal{T}^{(0)}$ of $\mathcal{T}_{m,n}$. Then, for every $w \in \Gamma$: \[\mathbb{P}\left(wG_k \cdot v \in \mathcal{T}^{(0)} \ \forall k \in \mathbb{N}\right) = 0. \]
	\end{lemma} 

\begin{remark}
    By Bass-Serre theory, the existence of $\mathcal{T}^{(0)}$ is equivalent to the fact that $\Lambda$ is finitely generated and $\Lambda \backslash \mathcal{T}_{m,n}$ is infinite (see also Remark~\ref{properinvsubtree}).
\end{remark}

\begin{proof}
    We first build a finite set $F \subseteq \Gamma$ satisfying the following property: for every $w \in \Gamma$, there exists $f_{w} \in F$ such that $w f_{w} \cdot v \notin \mathcal{T}^{(0)}$. 

    To achieve this, using the cocompactness of $\Lambda \curvearrowright \mathcal{T}^{(0)}$, let us fix a finite set $F_0 \subseteq \Gamma$ such that \begin{equation}\label{eq: lambda cocpct}\Lambda F_0 \cdot v = \mathcal{V}\left(\mathcal{T}^{(0)}\right).\end{equation}
    As $\mathcal{T}^{(0)}$ is a proper subtree of $\mathcal{T}_{m,n}$, for any $f \in F_0$ we also fix $\gamma_f \in \Gamma$ such that $f\gamma_f \cdot v \notin \mathcal{T}^{(0)}$. 
    
    Now let us take any $w \in \Gamma$. If $w \cdot v \notin \mathcal{T}^{(0)}$, then taking $f_w = 1$ implies that $w f_w \cdot v \notin \mathcal{T}^{(0)}$. Otherwise, by Equation~\eqref{eq: lambda cocpct}, there exists some $f \in F_0$ and some $\lambda \in \Lambda$ such that $w \cdot v = \lambda f \cdot v$. As $\Stab(v) = \langle b \rangle$, there exists some $l_0 \in \mathbb{Z}$ such that $w b^{l_0} = \lambda f $. Let $h_f$ be the height of $\gamma_f$ and let \[N_f = |mn|^{h_f}.\] Writing the euclidean division of $l_0$ by $N_f$, one gets $l_0 = - qN_f + k_0$ for some $q \in \mathbb{Z}$ and some $k_0 \in \llbracket 0, N_f-1 \rrbracket$. In particular, $wb^{k_0} = \lambda f b^{qN_f}$. As $h_f$ is the height of $\gamma_f$, the defining relation of $\BS(m,n)$ implies that $b^{qN_f}\gamma_f \in \gamma_f \langle b \rangle$, thus $wb^{k_0}\gamma_f \cdot v = \lambda f \gamma_f \cdot v$. As $f \gamma_f \cdot v \notin \mathcal{T}^{(0)}$ and $\mathcal{T}^{(0)}$ is $\Lambda$-invariant, we finally get that $wb^{k_0}\gamma_f \cdot v \notin \mathcal{T}^{(0)}$.
    
    So $$F = \{1\} \cup \bigcup_{f \in F_0}\left\{b^{k_0}\gamma_f \mid k_0 \in \llbracket 0, N_f-1\rrbracket\right\}$$ is suitable.

		For every $f \in F$, using the fact that $\Supp(\mu)$ is a symmetric and generating set of $\Gamma$, let us write $f = \mathfrak{g}_1^f \cdots  \mathfrak{g}_{N_f}^f$ for some $N_f \in \mathbb{N}$ and $\mathfrak{g}_i^f \in \Supp(\mu)$ for every $i \in \llbracket 1, N_f \rrbracket$. Let us denote by \[\theta = \min_{f \in F}\prod_{i=1}^{N_f}\mu\left(\mathfrak{g}_i^f\right) \in ]0,1[\]
		and by \[L = \max_{f \in F}N_f.\]
        
        For every $N \in \mathbb{N}^*$ and $w \in \Gamma$, let us define the event \[A_{N,w} := \{ w G_k \cdot v \in \mathcal{T}^{(0)} \ \forall k \in \llbracket 0, NL-1 \rrbracket\}. \] 

    For any $N \in \mathbb{N}$ and $w \in \Gamma$, observe that:
\begin{align*}
    \mathbb{P}(A_{N+1,w}) &= \sum_{g \in \Gamma \mid  w g \cdot v \in \mathcal{T}^{(0)}}\mathbb{P}(A_{N,w} \cap \{G_{NL-1} = g\} \text{ and }  wgS_{NL} \cdots S_k \cdot v \in \mathcal{T}^{(0)} \\ & \ \ \ \ \  \ \ \ \ \ \ \ \ \ \ \ \ \ \ \ \ \ \ \ \ \ \ \ \ \ \ \ \ \ \ \ \ \ \ \ \ \ \ \ \ \ \ \ \ \ \ \ \ \ \ \ \ \ \ \ \ \ \ \ \ \ \ \ \ \ \ \  \forall k \in \llbracket  NL, (N+1)L-1 \rrbracket ) \\
            &= \sum_{g \in \Gamma \mid wg \cdot v \in \mathcal{T}^{(0)}}\mathbb{P}(A_{N,w} \cap \{G_{NL-1} = g\}) \cdot \mathbb{P}(wgS_{NL} \cdots S_k \cdot v \in \mathcal{T}^{(0)} \\& \ \ \ \ \ \ \ \ \ \ \ \ \ \ \  \  \ \ \ \ \ \ \ \ \ \ \ \ \ \ \ \ \ \ \ \ \ \ \  \ \ \ \ \ \ \ \ \ \ \ \ \ \ \ \ \ \ \ \ \ \ \ \ \ \ \ \ \ \ \ \ \ \forall k \in \llbracket  NL, (N+1)L-1 \rrbracket ),
		\end{align*}
        the last equality resulting from the independence of the $S_i$'s. Thus \begin{align*}
             \mathbb{P}(A_{N+1,w}) &\leq \sum_{g \in \Gamma \mid wg \cdot v \in \mathcal{T}^{(0)}}\mathbb{P}(A_{N,w} \cap \{G_{NL-1} = g\})\cdot \mathbb{P}(S_{NL} \cdots S_{NL + N_{f_{wg}}-1} \neq f_{wg} ) \\
            &\leq (1-\theta) \sum_{g \in \Gamma \mid wg \cdot v \in \mathcal{T}^{(0)}}\mathbb{P}(A_{N,w} \cap \{G_{NL-1} = g\})  \\
            &= (1-\theta)\mathbb{P}(A_{N,w}).
        \end{align*}

Thus, a straightforard induction implies that $\mathbb{P}(A_{N, w}) \leq (1-\theta)^N$, which tends to $0$ as $N$ tends to $\infty$. 
\end{proof}

In fact, a stronger statement holds: in the setting of the previous lemma, the random walk $G_k \cdot v$ escapes $\mathcal{T}^{(0)}$ \textit{and never returns} almost surely. This is encoded by the following proposition: 

\begin{proposition}\label{convloin}
    Let $\Lambda$ be a subgroup of $\Gamma$ that acts cocompactly on a proper subtree $\mathcal{T}^{(0)}$ of $\mathcal{T}_{m,n}$. Then, the random walk $(G_k \cdot v)_{k \in \mathbb{N}}$ converges almost surely to a random end $\xi \in \partial \mathcal{T}_{m,n} \smallsetminus \partial \mathcal{T}^{(0)}$.
\end{proposition}

	To prove Proposition~\ref{convloin}, we will make use of the following result, which is a consequence of the main theorem of \cite{end}:
	
	\begin{theorem}\label{conv}
		Let $(m,n)\in\mathbb{Z}^2$ such that $\min\left(|m|,|n|\right)>1$. Then, the sequence $(G_k \cdot v)_{k \in \mathbb{N}}$ converges almost surely to a random end $\xi \in \partial \mathcal{T}_{m,n}$. 
	\end{theorem}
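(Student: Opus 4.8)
The plan is to derive the statement from the main theorem of \cite{end} (Cartwright--Soardi), which guarantees almost sure convergence to a random end for a random walk on a countable group $G$ acting on a locally finite tree $T$, provided that the action is \emph{nonelementary} (no global fixed vertex, no fixed end, no invariant geodesic line) and that the support of the step distribution generates $G$. Since $\mu$ generates $\Gamma = \BS(m,n)$ by hypothesis, the whole proof reduces to verifying that the action of $\Gamma$ on $\mathcal{T}_{m,n}$ meets these requirements. Local finiteness is immediate, and there are enough ends: as $\min(|m|,|n|) > 1$, every vertex of $\mathcal{T}_{m,n}$ has degree $|m| + |n| \geq 4$, so $\mathcal{T}_{m,n}$ is a locally finite tree all of whose vertices have degree $\geq 3$, and hence has uncountably many ends.

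The crux is nonelementarity, and this is exactly where the hypothesis $\min(|m|,|n|) > 1$ is genuinely used; I would establish it by exhibiting two hyperbolic elements with transverse axes. Since $\mathfrak{h}(t^k) = |k|$, the vertices $(t^k \langle b \rangle)_{k \in \mathbb{Z}}$ form a geodesic line $L_t$ that is translated by $t$, so $t$ is hyperbolic with axis $L_t$; its conjugate $btb^{-1}$ is then hyperbolic with axis $bL_t = (bt^k\langle b \rangle)_{k \in \mathbb{Z}}$, which passes through $\langle b \rangle = b\langle b \rangle$. I would then check that $L_t$ and $bL_t$ meet only at $\langle b \rangle$: the two neighbours of $\langle b \rangle$ on $L_t$ are $t^{\pm 1}\langle b \rangle$, and those on $bL_t$ are $bt^{\pm 1}\langle b \rangle$. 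Using the normal form, $t\langle b \rangle = bt\langle b \rangle$ would force $t^{-1}bt \in \langle b \rangle$, which happens only if $|n| = 1$, while $t^{-1}\langle b \rangle = bt^{-1}\langle b \rangle$ would force $tbt^{-1} \in \langle b \rangle$, which happens only if $|m| = 1$. As $|m|, |n| \geq 2$, the four neighbours are pairwise distinct, so in the tree the convex sets $L_t$ and $bL_t$ intersect in the single vertex $\langle b \rangle$; in particular the two axes share no endpoint, producing four distinct ends in $\partial\mathcal{T}_{m,n}$.

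Having two hyperbolic isometries with disjoint endpoint pairs, a standard ping-pong argument shows that $\langle t, btb^{-1}\rangle \leq \Gamma$ contains a nonabelian free group, fixes no end, preserves no geodesic line, and has no global fixed vertex; hence the action of $\Gamma$ on $\mathcal{T}_{m,n}$ is nonelementary (and, being cocompact with unbounded orbits, minimal). The main theorem of \cite{end} then applies and yields the almost sure convergence of $S_k \cdot v$ to a random end $\xi \in \partial\mathcal{T}_{m,n}$, for every $v \in \mathcal{V}(\mathcal{T}_{m,n})$. I expect the transversality computation above to be the only real obstacle: it is precisely the point separating the non-ascending case $\min(|m|,|n|)>1$ from the ascending one ($|m|=1$ or $|n|=1$), in which the group fixes an end and the limit end would be deterministic rather than random.
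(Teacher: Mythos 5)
Your proposal is correct and follows essentially the same route as the paper: both deduce the statement from the main theorem of Cartwright--Soardi \cite{end}, the only difference being that you verify the nonelementarity hypothesis by hand (transverse axes of $t$ and $btb^{-1}$, with the normal-form check that $t^{-1}bt\notin\langle b\rangle$ and $tbt^{-1}\notin\langle b\rangle$ when $|m|,|n|\geq 2$), whereas the paper outsources the equivalent fact --- that $\Supp(\mu)$ is not contained in any amenable closed subgroup of $\Aut(\mathcal{T}_{m,n})$ --- to \cite[Lemma 4.8]{rwbsmn}. Your hands-on verification is sound and correctly isolates where $\min(|m|,|n|)>1$ is used.
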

	
	Theorem \ref{conv} relies on the fact that $(G_k)_{k \in \mathbb{N}}$ is a regular random walk on the automorphism group $\mathrm{Aut}(\mathcal{T}_{m,n})$ of the infinite locally finite tree $\mathcal{T}_{m,n}$ and that $\Supp(\mu)$ is not contained in any amenable closed subgroup of $\mathrm{Aut}(\mathcal{T}_{m,n})$. Thus, the hypotheses of the main theorem of \cite{end} are satisfied. For more details, see \cite[Lemma 4.8]{rwbsmn}. 

Hence, Theorem~\ref{conv} gives the existence of $\xi \in \partial \mathcal{T}_{m,n}$ in Proposition~\ref{convloin}, so what we need to prove is that, almost surely, $\xi \notin \partial \mathcal{T}^{(0)}$. Before entering into technical details, let us give an intuition about the proof. Lemma~\ref{sort} implies that the random walk starting from any element $w$ of $\Gamma$ escapes $\mathcal{T}^{(0)}$ almost surely. We will prove that, when it reaches a vertex outside a suitable neighborhood of $\mathcal{T}^{(0)}$, the probability of never returning into $\mathcal{T}^{(0)}$ is bounded below by a strictly positive probability (Lemma~\ref{lem: borel cantelli}). Then, a standard argument using stopping times will show that, in fact, the random walk escapes $\mathcal{T}^{(0)}$ and never returns almost surely, which ensures that $\xi \notin \partial \mathcal{T}^{(0)}$ almost surely. Here, the main subtlety comes from the fact that the sequence $(S_1 \cdots S_k \cdot v)_{k \in \mathbb{N}}$ is not a Markov chain: because of the non-triviality of the vertex stabilizers (for the action of $\Gamma$ on $\mathcal{T}_{m,n}$), the image $S_1 \cdots S_{k+1} \cdot v$ does not only depend on $S_1 \cdots S_{k} \cdot v$, but on the element $S_1 \cdots S_k$ of $\Gamma$. Thus, when bounding probabilities by below, we have to condition on the realization of the random walk in the group at a previous step, not on the image of this realization in $\mathcal{T}_{m,n}$.

Now we give some technical lemmas to prove Proposition~\ref{convloin}. Before that, let us introduce some notations. Using the fact that $\Supp(\mu)$ is bounded, let us denote by $$M := \max_{\gamma \in \Supp(\mu)}\mathfrak{h}(\gamma).$$ Let $F_0$ be the (finite) set of reduced words $g = b^{n_1}t^{\varepsilon_1}b^{n_2} \cdots b^{n_r}t^{\varepsilon_r}b^{n_{r+1}}$ that satisfy $n_1=0$ and $\mathfrak{h}(g)\leq M$. In particular, the set of words of height less than $M$ is precisely $\bigcup_{g \in F_0}\langle b \rangle g$; and by Remark~\ref{rem: dist normal form}, this is also the set of elements $w \in \Gamma$ that satisfy $w \cdot v \in B_{\mathcal{T}_{m,n}}(v, M)$.
\begin{lemma}\label{lem: never return}
    There exists some $f \in F_0$ such that \[\mathbb{P}\left(f G_k \cdot v \notin B_{\mathcal{T}_{m,n}}(v, M) \ \forall k > 0\right) > 0.\]
\end{lemma}

\begin{proof}
    Towards a contradiction, let us assume that, for every $f \in F_0$, one has \begin{equation}\label{eq: abs}\mathbb{P}\left(f G_k \cdot v \notin B_{\mathcal{T}_{m,n}}(v, M) \ \forall k > 0\right) = 0.\end{equation}
Let us define by induction the sequence of random variables $(\tau_i)_{i \in \mathbb{N}^*}$ as follows: \begin{align*}
    \tau_1 &= \inf \{k > 0 \mid G_k \cdot v \in B_{\mathcal{T}_{m,n}}(v, M) \}; \text{ and, for every $i \geq 1$} \\
    \tau_{i+1} &= \inf\{k > \tau_i \mid G_k \cdot v \in B_{\mathcal{T}_{m,n}}(v, M) \}.
\end{align*}
For every $i$, notice that $\tau_i$ is a stopping time with respect to the Markov chain $(G_k)_{k \in \mathbb{N}}$. Applying Equation~\ref{eq: abs} to $f=1$, we get that $\tau_1 < \infty$ almost surely. Assuming by induction that $\tau_i < \infty$ almost surely, we get that \begin{align*}
    \mathbb{P}\left(\tau_{i+1} < \infty\right) &= \mathbb{P}\left(\exists k > \tau_i, \ G_k \cdot v \in B_{\mathcal{T}_{m,n}}(v, M) \right) \\
    &= \sum_{f \in F_0}\mathbb{P}\left(\exists k > \tau_i,  S_1 \cdots S_k \cdot v \in B_{\mathcal{T}_{m,n}}(v, M) \text{ and } S_1 \cdots S_{\tau_i} \in \langle b \rangle f \right) \\
    &=\sum_{f \in F_0}\mathbb{P}\left(\exists k > \tau_i,  f \cdot S_{\tau_i +1} \cdots S_k \in B_{\mathcal{T}_{m,n}}(v, M) \text{ and } S_1 \cdots S_{\tau_i} \in \langle b \rangle f \right)
\\
&= \sum_{f \in F_0}\mathbb{P}\left(\exists k > \tau_i,  f \cdot S_{\tau_i +1} \cdots S_k \in B_{\mathcal{T}_{m,n}}(v, M)\right) \mathbb{P}\left(S_1 \cdots S_{\tau_i} \in \langle b \rangle f \right) \\&\text{ (by independence of the $S_k$'s)} \\
&= \sum_{f \in F_0}\mathbb{P}\left(S_1 \cdots S_{\tau_i} \in \langle b \rangle f \right) \\ &\text{ (as the $S_j$'s are identically distributed and using \eqref{eq: abs})} \\
&=1.
\end{align*}
    This proves that, almost surely, $G_k \cdot v \in B_{\mathcal{T}_{m,n}}(v, M)$ for infinitely many $k$'s which contradicts the fact that $(G_k \cdot v)_{k \in \mathbb{N}}$ converges almost surely to a random end.
\end{proof}

Before proving that the random walk $G_k \cdot v$ never returns to a subtree $\mathcal{T}^{(0)}$ as soon as it escapes a suitable neighborhood of $\mathcal{T}^{(0)}$ with non-zero (uniform) probability, we need the following deterministic result:

\begin{lemma}\label{lem: half tree}
        Let $\mathcal{T}^{(0)}$ be a subtree of $\mathcal{T}_{m,n}$.  Fix a sequence $(s_i)_{i \in \mathbb{N}^*} \in \Supp(\mu)^{\mathbb{N}}$ and some $w_0 \in \Gamma$ such that $w_0 \cdot v \notin \mathcal{T}^{(0)}$ and let us denote by $e$ the edge with source $w_0 \cdot v$ that points towards $\mathcal{T}^{(0)}$. Also assume that there exists some $l_0 > 0$ such that \begin{enumerate}
          \item $w_0 s_1 \cdots s_{l_0} \cdot v \in B_{\mathcal{T}_{m,n}}(w_0 \cdot v, M)$; and
          \item $w_0 s_1 \cdots s_k \cdot v \notin B_{\mathcal{T}_{m,n}}(w_0 \cdot v, M)$ for every $k > l_0$; and
          \item there exists some $m_0 > l_0$ such that $w_0 s_1 \cdots s_{m_0} \cdot v \in \mathcal{T}^{(0)}$.
          \end{enumerate} Let \[
\eta_e := \left\{
    \begin{array}{ll}
        t^{-1}bt & \mbox{if $e$ is positive} \\
        tbt^{-1} & \mbox{if $e$ is negative}
    \end{array}.
\right.
\]    
          Then, for every $k > l_0$, the vertex $w_0 \cdot \eta_e \cdot s_{1} \cdots s_k \cdot v$ does not belong to $\mathcal{T}^{(0)}$. 
\end{lemma}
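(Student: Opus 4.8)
The plan is to recast the whole statement as a fact about the image of the original trajectory under a single fixed elliptic isometry. Write $P_k := w_0 g_1 \cdots g_k \cdot v$ for the trajectory governed by the hypotheses and $Q_k := w_0 \eta_e g_1 \cdots g_k \cdot v$ for the one we must control, and set $\phi := w_0 \eta_e w_0^{-1}$, so that $Q_k = \phi(P_k)$. Since $\eta_e$ is a conjugate of $b^{\pm 1}$ (namely $t^{-1}bt$ or $tbt^{-1}$), it is elliptic and fixes the vertex $t^{-1}\cdot v$ (resp. $t\cdot v$); hence $\phi$ is elliptic and fixes $c := w_0 t^{-1}\cdot v$ (resp. $w_0 t\cdot v$), which is the neighbour of $w_0\cdot v$ joined to it by the edge of type opposite to $e$. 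Let $\mathcal{H}$ be the connected component of $w_0\cdot v$ in $\mathcal{T}_{m,n}\smallsetminus\{c\}$. I would reduce the lemma to the single claim that $P_k \in \mathcal{H}$ for every $k > l_0$.

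Before proving that claim I would record two purely geometric facts. First, $\mathcal{T}^{(0)}\subseteq\mathcal{H}$: since $e$ points towards $\mathcal{T}^{(0)}$ while the edge from $w_0\cdot v$ to $c$ has type opposite to $e$ and is therefore a different edge, the entire half-tree of $e$, which contains $\mathcal{T}^{(0)}$, lies in $\mathcal{H}$ (in particular $c\notin\mathcal{T}^{(0)}$). Second, as $\phi$ fixes $c$ it permutes the components of $\mathcal{T}_{m,n}\smallsetminus\{c\}$, and $\phi(w_0\cdot v)=w_0\eta_e\cdot v$ lies in a component distinct from $\mathcal{H}$, because the geodesic from $w_0\cdot v$ to $w_0\eta_e\cdot v$ passes through $c$ (the vertex $\eta_e\cdot v$ is reached from $v$ by first crossing the edge towards $t^{-1}\cdot v$). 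Thus $\phi(\mathcal{H})$ is a component disjoint from $\mathcal{H}$, hence disjoint from $\mathcal{T}^{(0)}$. Granting the claim, for every $k>l_0$ we then get $Q_k=\phi(P_k)\in\phi(\mathcal{H})$, which meets neither $\mathcal{H}$ nor $\mathcal{T}^{(0)}$, which is exactly the conclusion.

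It remains to prove the claim, and I expect this to be the main obstacle, the subtlety being that the trajectory can jump over $c$ in a single step instead of landing on it, so that a naive ``to leave $\mathcal{H}$ it must hit $c$'' argument fails. Suppose for contradiction that $P_{k_1}\notin\mathcal{H}$ for some $k_1>l_0$. Assumption (2) gives $P_{k_1}\notin B_M(w_0\cdot v)$; since $c$ lies at distance $1\leq M$ from $w_0\cdot v$ (note $M\geq 1$ as $\Supp(\mu)$ generates the non-abelian group $\Gamma$), we have $c\in B_M(w_0\cdot v)$, so $P_{k_1}\neq c$ and $P_{k_1}$ sits in a component of $\mathcal{T}_{m,n}\smallsetminus\{c\}$ other than $\mathcal{H}$. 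By assumption (3), $P_{m_0}\in\mathcal{T}^{(0)}\subseteq\mathcal{H}$ with $m_0>l_0$. Consequently the finite piece of trajectory between times $k_1$ and $m_0$ contains a step $P_k\to P_{k+1}$, with both indices exceeding $l_0$, that crosses from one side of $c$ to the other; since neither endpoint equals $c$, the vertex $c$ is interior to the geodesic $[P_k,P_{k+1}]$, whence $d(P_k,c)+d(c,P_{k+1})=d(P_k,P_{k+1})\leq M$ with both summands at least $1$. This forces $d(P_k,c)\leq M-1$, hence $d(P_k,w_0\cdot v)\leq M$, i.e. $P_k\in B_M(w_0\cdot v)$ with $k>l_0$, contradicting assumption (2). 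Therefore $P_k\in\mathcal{H}$ for all $k>l_0$, which completes the proof.
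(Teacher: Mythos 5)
Your proof is correct. The confinement step --- showing that $P_k=w_0g_1\cdots g_k\cdot v$ stays, for all $k>l_0$, in a half-space containing $\mathcal{T}^{(0)}$, because a step of length at most $M$ that crossed the separating vertex would force some $P_k$ with $k>l_0$ back into $B_M(w_0\cdot v)$, contradicting hypothesis (2) --- is essentially the same as in the paper, except that the paper confines the trajectory to the half-tree $\mathcal{T}_e$ of $e$ whereas you use the larger component $\mathcal{H}$ of $\mathcal{T}_{m,n}\smallsetminus\{c\}$; either suffices, and your explicit handling of the ``jump over $c$ without landing on it'' case via $d(P_k,c)+d(c,P_{k+1})=d(P_k,P_{k+1})\le M$ is exactly right. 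Where you genuinely depart from the paper is in how $\eta_e$ is exploited. The paper argues with normal forms: every vertex of $\mathcal{T}_e$ is $w_0g\cdot v$ with the normal form of $g$ beginning with $b^rt^{\pm 1}$, so that $\eta_e g$ begins with $t^{\mp 1}$ and the geodesic from $w_0\cdot v$ to $w_0\eta_e g\cdot v$ exits through an edge of the type opposite to $e$, hence misses $\mathcal{T}_e\supseteq\mathcal{T}^{(0)}$. You instead conjugate: $\phi=w_0\eta_e w_0^{-1}$ is elliptic, fixes the vertex $c=w_0t^{\mp 1}\cdot v$ sitting on the opposite side of $w_0\cdot v$ from $e$, and therefore sends the component $\mathcal{H}$ --- which contains both the confined trajectory and $\mathcal{T}^{(0)}$ --- to a disjoint component of $\mathcal{T}_{m,n}\smallsetminus\{c\}$. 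This is a purely tree-geometric substitute for the word computation; it isolates the only two properties of $\eta_e$ that matter (it fixes $c$ and moves $w_0\cdot v$ out of $\mathcal{H}$, both read off from $\mathfrak{h}(\eta_e)=2$), at the mild cost of introducing the auxiliary vertex $c$ and verifying that $\mathcal{T}^{(0)}$ and the trajectory sit in $\mathcal{H}$ rather than merely in the half-tree of $e$. The two proofs are of comparable length; yours avoids the normal-form bookkeeping (in particular the mild care needed to see that no pinch $t^{-1}b^{\ast n}t$ or $tb^{\ast m}t^{-1}$ can occur when prepending $\eta_e$), which is where the paper's version demands the most attention.
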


\begin{proof}
     Let us denote by $\mathcal{T}_e$ the half-tree of $e$. As it contains $\mathcal{T}^{(0)}$, one has $w_0 s_1 \cdots s_{m_0} \cdot v \in \mathcal{T}_e$. Thus, as $w_0 s_1 \cdots s_k \cdot v$ never returns in $B_{\mathcal{T}_{m,n}}(w_0 \cdot v, M)$ for $k > l_0$, the fact that $d(w_0 s_1 \cdots s_k \cdot v, w_0 s_1 \cdots s_{k+1} \cdot v) \leq M$ for every $k$ (by definition of $M$) implies that the sequence of vertices $(w_0 s_1 \cdots s_k \cdot v)_{k > l_0}$ remains in $\mathcal{T}_e$. Let us assume that $e$ is positive. In particular, the geodesic between $w_0 \cdot v$ and $w_0 \cdot s_1 \cdots s_k \cdot v$ begins with the positive edge $e$ for every $k > l_0$. Thus, the geodesic between $v$ and $s_1 \cdots s_k \cdot v$ begins with the positive edge $w_0^{-1} \cdot e$ for every $k > l_0$. So by the ``more specifically'' part of Remark~\ref{rem: dist normal form}, the normal form of $s_{1} \cdots s_k$ begins with $b^rt$ for some $r \in \mathbb{Z}$ and for every $k > l_0$. Consequently, the normal form of $t^{-1}bts_{1} \cdots s_k$ begins with $t^{-1}$, which implies (using again Remark~\ref{rem: dist normal form}) that the geodesic that connects $v$ and $t^{-1}bts_{1} \cdots s_k \cdot v$ begins with a negative edge for every $k > l_0$. Thus, the geodesic that connects $w_0 \cdot v$ and $w_0 \cdot t^{-1}bts_{1} \cdots s_k \cdot v$ begins with a negative edge, hence $w_0 t^{-1}bts_{1} \cdots s_k \cdot v \notin \mathcal{T}_e$ so $w_0\eta_e s_{1} \cdots s_k \cdot v \notin \mathcal{T}^{(0)}$ for every $k > l_0$. If $e$ is negative, the same argument shows that the normal form of $tbt^{-1}s_{1} \cdots s_k$ begins with $t$ and consequently, that $w_0\eta_e s_{1} \cdots s_k \cdot v \notin \mathcal{T}^{(0)}$ for every $k > l_0$. 
\end{proof}

Now we provide a uniform lower bound on the probability of never returning to some subtree $\mathcal{T}^{(0)}$ knowing that the random walk has escaped a suitable neighborhood of $\mathcal{T}^{(0)}$.

\begin{lemma}\label{lem: borel cantelli}
    There exist $L > 0$ and $q > 0$ such that, for every $w_0 \in \Gamma$ that satisfies $w_0 \cdot v \notin \mathcal{N}_{\mathcal{T}_{m,n},L}\left(\mathcal{T}^{(0)}\right) $, one has \[\mathbb{P}\left(\ w_0 G_k \cdot v \notin \mathcal{T}^{(0)}  \ \forall k > 0 \right) \geq q.\]
\end{lemma}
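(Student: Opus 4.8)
The plan is to combine the escape estimate of Lemma~\ref{lem: never return} with the deterministic redirection of Lemma~\ref{lem: half tree}, running both from the \emph{same} base point $w_0$ and absorbing the shift produced by Lemma~\ref{lem: never return} into the first letters of the path. Fix $w_0\in\Gamma$ with $w_0\cdot v\notin V_L\left(\mathcal{T}^{(0)}\right)$, where $L$ is to be chosen at the end depending only on $M$, and let $e$ be the edge with source $w_0\cdot v$ pointing towards $\mathcal{T}^{(0)}$ (so that $\mathcal{T}^{(0)}\subseteq\mathcal{T}_e$). First I apply Lemma~\ref{lem: never return} to obtain $f\in F_0$, I set $w:=w_0 f$, and I let $E$ be the escape event $\left\{wG_1\cdots G_k\cdot v\notin B_M(w_0\cdot v)\ \forall k>0\right\}$, so that $\mathbb{P}[E]\geq p$. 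Since $\mathfrak{h}(f)\leq M$, the vertex $w\cdot v$ lies within distance $M$ of $w_0\cdot v$, hence outside $\mathcal{T}^{(0)}$ as soon as $L>M$.

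Next I split $E$ according to whether the escaping walk ever meets $\mathcal{T}^{(0)}$: let $A:=\left\{\exists k>0:\ wG_1\cdots G_k\cdot v\in\mathcal{T}^{(0)}\right\}$. From $\mathbb{P}[E\cap A]+\mathbb{P}[E\cap A^c]=\mathbb{P}[E]\geq p$, at least one summand is $\geq p/2$. If $\mathbb{P}[E\cap A^c]\geq p/2$, then on $E\cap A^c$ the walk issued from $w$ already avoids $\mathcal{T}^{(0)}$, and I take the deterministic prefix $h:=f$. If instead $\mathbb{P}[E\cap A]\geq p/2$, I take $h:=\eta_e f$ and invoke Lemma~\ref{lem: half tree} with base point $w_0$: writing the path as a fixed expression $g_1\cdots g_{l_0}=f$ in $\Supp(\mu)$ followed by the random steps, and choosing $l_0$ equal to the number of these letters, the hypotheses~(1)--(2) of that lemma are exactly $E$ and hypothesis~(3) is exactly $A$, so its conclusion gives $w_0\eta_e f\,G_1\cdots G_k\cdot v\notin\mathcal{T}^{(0)}$ for every $k>0$. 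Since these events depend only on the increment law, the same bounds apply to an independent copy of the walk; thus in either case there is a word $h$ lying in a finite set (as $w_0,\mathcal{T}^{(0)}$ vary, $f$ ranges over the finite set $F_0$ and $\eta_e\in\{t^{-1}bt,\,tbt^{-1}\}$) with
\[\mathbb{P}\big[\text{the walk issued from }w_0 h\text{ avoids }\mathcal{T}^{(0)}\big]\geq \tfrac{p}{2}.\]

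It then remains to pay for reaching $w_0 h$ from $w_0$. As $\Supp(\mu)$ is symmetric and generating, I fix for each such $h$ an expression $h=\mathfrak{g}_1^h\cdots\mathfrak{g}_{N_h}^h$ with $\mathfrak{g}_i^h\in\Supp(\mu)$, and set $\theta:=\min_h\prod_i\mu\left(\mathfrak{g}_i^h\right)>0$ and $L_0:=\max_h N_h$. Every intermediate vertex $w_0\mathfrak{g}_1^h\cdots\mathfrak{g}_i^h\cdot v$ lies within distance $L_0 M$ of $w_0\cdot v$, hence outside $\mathcal{T}^{(0)}$ once $L>L_0 M$; in particular the starting vertex $w_0 h\cdot v$ of the continuation is outside $\mathcal{T}^{(0)}$. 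Using independence of the increments to separate the first $N_h$ steps from the rest, I obtain
\begin{align*}
\mathbb{P}\big[w_0 G_1\cdots G_k\cdot v\notin\mathcal{T}^{(0)}\ \forall k>0\big]
&\geq \mathbb{P}\big[G_i=\mathfrak{g}_i^h\ \forall i\leq N_h\big]\cdot\mathbb{P}\big[\text{walk from }w_0 h\text{ avoids }\mathcal{T}^{(0)}\big]\\
&\geq \theta\cdot\tfrac{p}{2}=:q>0,
\end{align*}
the prefix event controlling the indices $k\leq N_h$ and the continuation event controlling $k>N_h$. Taking $L:=L_0 M+1$ and $q:=\theta p/2$, both depending only on $M$ (for the fixed $\mu$) and neither on $w_0$ nor on $\mathcal{T}^{(0)}$, would conclude the proof.

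The main difficulty will be the bookkeeping forced by the non-Markovian character of $(G_1\cdots G_k\cdot v)_k$: because of the non-trivial vertex stabilizers, the escape estimate of Lemma~\ref{lem: never return} is naturally attached to the shifted start $w=w_0 f$, whereas the redirecting element $\eta_e$ of Lemma~\ref{lem: half tree} must be inserted at the fixed base $w_0$. The device reconciling the two is precisely to run both lemmas from $w_0$, encode $f$ as the first $l_0$ deterministic letters of the path, and set $l_0$ equal to that number of letters, so that conditions~(1)--(3) of Lemma~\ref{lem: half tree} become literally the events $E$ and $A$.
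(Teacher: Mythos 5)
Your proposal is correct and follows essentially the same route as the paper's proof: use Lemma~\ref{lem: never return} to get the escape event with probability $\geq p$, split according to whether the escaped walk ever meets $\mathcal{T}^{(0)}$ (so one branch has probability $\geq p/2$), insert $\eta_e$ via Lemma~\ref{lem: half tree} in the returning branch, pay a uniform price $\theta$ for the deterministic prefix over the finite set of candidates, and choose $L$ large enough that the prefix vertices stay outside $\mathcal{T}^{(0)}$. The only cosmetic difference is that you split $\mathbb{P}[E]=\mathbb{P}[E\cap A]+\mathbb{P}[E\cap A^c]$ where the paper takes the maximum of two conditional probabilities that sum to $1$; the resulting constant $q=\theta p/2$ is the same.
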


\begin{proof}
     Using the fact that $\Supp(\mu)$ generates $\Gamma$ as a semi-group, let us fix a writing \[g = \mathfrak{g}_1^g \cdots \mathfrak{g}_{N_g}^g,\] for every $g \in F_0 \cup \{tbt^{-1},t^{-1}bt\}$, where $\mathfrak{g}_1^g, \ldots, \mathfrak{g}_{N_g}^g \in \Supp(\mu)$. Let us also define \[q_0 = \min_{(\eta,g) \in \{tbt^{-1}, t^{-1}bt\} \times F_0}\prod_{i=1}^{N_{\eta}}\mu\left(\mathfrak{g}_i^{\eta}\right) \cdot \prod_{i=1}^{N_g}\mu\left(\mathfrak{g}_i^{g}\right).\] Let \[\kappa = \max_{(\eta, g) \in \{tbt^{-1}, t^{-1}bt\} \times F_0, (i,j) \in \llbracket 1, N_{\eta}\rrbracket \times \llbracket 1, N_g\rrbracket}  \left\{\mathfrak{h}\left(\prod_{k=1}^i \mathfrak{g}_k^{\eta}\right),\mathfrak{h}\left(\eta \cdot \prod_{k=1}^j \mathfrak{g}_k^g\right), \mathfrak{h}\left(\prod_{k=1}^j \mathfrak{g}_k^g\right)\right\} \] and let us take \[L = \kappa + 1.\]

Now let $w_0 \in \Gamma$ such that $w_0 \cdot v \notin\mathcal{N}_{\mathcal{T}_{m,n},L}\left(\mathcal{T}^{(0)}\right)$. Let us denote by $e$ the edge with source $w_0 \cdot v$ that points towards $\mathcal{T}^{(0)}$.

Using Lemma~\ref{lem: never return}, let us fix some $f \in F_0$ such that \[\mathbb{P}\left(f  G_k \cdot v \notin B_{\mathcal{T}_{m,n}}(v, M) \ \forall k > 0  \right) > 0.\] We denote by $p$ this probability. Notice that, as $\Gamma$ acts on $\mathcal{T}_{m,n}$ by isometries, one also has \begin{align}\mathbb{P}\left(w_0f  G_k \cdot v \notin B_{\mathcal{T}_{m,n}}(w_0 \cdot v, M) \ \forall k > 0  \right) \nonumber &= \mathbb{P}\left(f  G_k \cdot v \notin B_{\mathcal{T}_{m,n}}(v, M) \ \forall k > 0  \right) \\ &=p. \label{eq: act isom} \end{align}

The definition of $\kappa$ implies that, for every $(\eta,g) \in \{tbt^{-1},t^{-1}bt\} \times F_0$: \begin{itemize}
    \item  for every $i \in \llbracket 1, N_g\rrbracket$, the vertex $u_i :=  w_0 \mathfrak{g}_1^g \cdots \mathfrak{g}_i^g \cdot v$ satisfies $d(w_0 \cdot v, u_i) \leq \kappa$;
    \item  for every $i \in \llbracket 1, N_g\rrbracket$, the vertex $v_i :=w_0 \eta \mathfrak{g}_1^g \cdots \mathfrak{g}_i^g \cdot v$ satisfies $d(w_0 \cdot v, v_i) \leq \kappa$;
    \item for every $i \in \llbracket 1, N_{\eta}\rrbracket$, the vertex $w_i :=w_0 \mathfrak{g}_1^{\eta} \cdots \mathfrak{g}_i^{\eta} \cdot v$ satisfies $d(w_0 \cdot v, w_i) \leq \kappa$.
\end{itemize}
 In particular, the triangle inequality implies that, if $x$ is one of the $u_i$'s, $v_i$'s, $w_i$'s, then \begin{align*}d\left(\mathcal{T}^{(0)},x\right) &\geq d(\mathcal{T}^{(0)},w_0 \cdot v) - \kappa \\ &\geq L-\kappa \\&=1, \end{align*} thus $x \notin \mathcal{T}^{(0)}$. Together with Lemma~\ref{lem: half tree}, this implies that
 \begin{align*}
     &\mathbb{P}\left(w_0 G_k \notin \mathcal{T}^{(0)}, \forall k > 0\right) \\
     &\geq \mathbb{P}\bigg(S_1 = \mathfrak{g}_1^{\eta_e}, \ldots, S_{N_{\eta_e}} = \mathfrak{g}_{N_{\eta_e}}^{\eta_e}, S_{N_{\eta_e}+1} = \mathfrak{g}_1^f, \ldots, S_{N_{\eta_e} + N_f} = \mathfrak{g}_{N_f}^f\\
     & \text{ and } w_0fS_{N_{\eta_e}+N_f+1}\cdots S_k \cdot v \notin B_{\mathcal{T}_{m,n}}(w_0 \cdot v, M), \forall k > N_{\eta_e}+N_f \\
     & \text{ and } \exists k > N_{\eta_e}+N_f, w_0fS_{N_{\eta_e}+N_f+1}\cdots S_k \cdot v \in \mathcal{T}^{(0)}\bigg) \\
     &\geq q_0 \mathbb{P}\left(w_0f G_k \cdot v \notin B_{\mathcal{T}_{m,n}}(w_0 \cdot v, M), \forall k > 0 \text{ and } \exists k > 0, w_0f G_k \cdot v \in\mathcal{T}^{(0)}\right),
 \end{align*}
 the last inequality resulting from the fact that the $S_i$'s are independent and identically distributed. Likewise, using again the previous remark: \begin{align*}
     &\mathbb{P}\left(w_0 G_k \notin \mathcal{T}^{(0)}, \forall k > 0\right) \\
     &\geq \mathbb{P}\left(S_1 = \mathfrak{g}_1^f, \ldots, S_{N_f} = \mathfrak{g}_{N_f}^f \text{ and } w_0fS_{N_f+1} \cdots S_k \notin \mathcal{T}^{(0)} \forall k > N_f\right) \\
     &\geq q_0\mathbb{P}\left(w_0fG_k \notin \mathcal{T}^{(0)} \forall k > 0\right).
 \end{align*}

 Decomposing \begin{align*}p &= \mathbb{P}\left(w_0f G_k \cdot v \notin B_{\mathcal{T}_{m,n}}(w_0 \cdot v, M) \forall k > 0 \text{ and } \exists k>0, w_0f G_k \cdot v \in \mathcal{T}^{(0)}\right)\\
 &+\mathbb{P}\left(w_0f G_k \cdot v \notin B_{\mathcal{T}_{m,n}}(w_0 \cdot v, M) \forall k > 0 \text{ and } w_0f G_k \cdot v \notin \mathcal{T}^{(0)}, \forall k > 0\right)
 \end{align*}
  we thus get \begin{align*}
      p &\leq \frac{2}{q_0} \mathbb{P}\left(w_0 G_k \cdot v \notin \mathcal{T}^{(0)}, \forall k > 0\right),
  \end{align*}
\textit{i.e.} \[\mathbb{P}\left(w_0 G_k \cdot v \notin \mathcal{T}^{(0)}, \forall k > 0\right) \geq \frac{pq_0}{2}.\]
Hence, setting $q := \frac{pq_0}{2}$ concludes the proof of the lemma.
\end{proof}

Now we are able to prove Proposition~\ref{convloin}. 

\begin{proof}[Proof of Proposition~\ref{convloin}]
  The fact that the random walk $G_k \cdot v$ converges almost surely to a random end $\xi \in \partial \mathcal{T}_{m,n}$ results from Theorem \ref{conv}. So it remains to show that $\xi \notin \partial \mathcal{T}^{(0)}$ almost surely.

      Let $L$ and $q$ be provided by Lemma~\ref{lem: borel cantelli}. As $\mathcal{N}_{\mathcal{T}_{m,n},L}\left(\mathcal{T}^{(0)}\right)$ is still a proper $\Lambda$-invariant subtree of $\mathcal{T}_{m,n}$ on which $\Lambda$ acts cocompactly, Lemma~\ref{sort}  tells us that: 
       \begin{equation}\label{eq: exit tree}\mathbb{P}\left(\exists k \geq 0, wG_k \cdot v \notin \mathcal{N}_{\mathcal{T}_{m,n}, L}\left(\mathcal{T}^{(0)}\right)  \right) = 1 \text{ for every $w \in \Gamma$.} \end{equation}
   
    Let us define inductively the random variables $\tau_i, \sigma_i$ which are the (possibly infinite) indices of the $i$-th exit from $\mathcal{N}_{\mathcal{T}_{m,n},L}\left(\mathcal{T}^{(0)}\right)$ (\textit{resp.} return to $\mathcal{T}^{(0)}$): \begin{align*}
        \tau_1 &= \min \left\{k > 0, G_k \cdot v \notin \mathcal{N}_{\mathcal{T}_{m,n},L}\left(\mathcal{T}^{(0)}\right)\right\}; \\
        \sigma_i &= \min\left\{k > \tau_i, G_k \cdot v \in \mathcal{T}^{(0)}\right\};\\
        \tau_{i+1} &=  \min\left\{k > \sigma_i, G_k \cdot v \notin \mathcal{N}_{\mathcal{T}_{m,n},L}\left(\mathcal{T}^{(0)}\right)\right\}.
    \end{align*} Let us define the event $\Sigma_i := \left\{\sigma_i < \infty\right\}$. Equation~\eqref{eq: exit tree} implies that
    \[\left\{G_k \cdot v \in \mathcal{T}^{(0)} \text{ for infinitely many $k$'s} \right\} = \bigcap_{i \in \mathbb{N}^*}\Sigma_i, \]
    thus we want to prove that $\mathbb{P}\left(\bigcap_{i \in \mathbb{N}^*}\Sigma_i\right)=0$. As $\Sigma_{i+1} \subseteq \Sigma_i$ for every $i \in \mathbb{N}^*$, one has $\mathbb{P}\left(\bigcap_{i \in \mathbb{N}^*} \Sigma_i\right) = \lim_{i \to +\infty}\mathbb{P}\left(\Sigma_i\right)$. For every $i \in \mathbb{N}$:
    \begin{align*}
        \mathbb{P}\left(\Sigma_{i+1}\right) &= \mathbb{P}\left(\tau_{i+1} < \infty \text{ and } \sigma_{i+1} < \infty\right)\\
        &= \sum_{w_0 \in \Gamma, w_0 \cdot v \notin \mathcal{N}_{\mathcal{T}_{m,n}, L}(\mathcal{T}^{(0)})}\mathbb{P}\big(\tau_{i+1} < \infty \text{ and } S_1 \cdots S_{\tau_{i+1}} = w_0 \\ & \ \ \ \ \ \ \ \ \ \ \ \ \ \ \ \ \ \ \ \ \ \ \ \ \ \ \ \ \ \ \ \ \ \ \ \  \ \ \text{ and } \exists \sigma > \tau_{i+1}, S_1 \cdots S_{\sigma} \cdot v \in \mathcal{T}^{(0)} \big)\\
        &= \sum_{w_0 \in \Gamma, w_0 \cdot v \notin \mathcal{N}_{\mathcal{T}_{m,n}, L}(\mathcal{T}^{(0)})}\mathbb{P}\big(\tau_{i+1} < \infty \text{ and } S_1 \cdots S_{\tau_{i+1}} = w_0 \\ &\ \ \ \ \ \ \ \ \ \ \ \ \ \ \ \ \ \ \ \ \ \ \ \ \ \ \ \ \ \ \ \ \ \ \ \  \ \  \text{ and } \exists \sigma > \tau_{i+1}, w_0 S_{\tau_{i+1}+1} \cdots S_{\sigma} \cdot v \in \mathcal{T}^{(0)} \big)\\
        &= \sum_{w_0 \in \Gamma, w_0 \cdot v \notin \mathcal{N}_{\mathcal{T}_{m,n}, L}(\mathcal{T}^{(0)})}\mathbb{P}\left(\tau_{i+1} < \infty \text{ and } S_1 \cdots S_{\tau_{i+1}} = w_0\right) \\ &\ \ \ \ \ \ \ \ \ \ \ \ \ \ \ \ \ \ \ \ \ \ \ \ \ \ \ \ \ \ \ \ \ \ \ \  \ \ \cdot \mathbb{P}\left( \exists \sigma > \tau_{i+1}, w_0 S_{\tau_{i+1}+1} \cdots S_{\sigma} \cdot v \in \mathcal{T}^{(0)} \right),
    \end{align*}
    the last equality resulting from the independence of the $S_k$'s and the fact that $\tau_{i+1}$ is a stopping time with respect to the random walk with step distribution $(S_k)_{k \in \mathbb{N}}$. As the $S_k$'s are identically distributed, we thus get:
    \begin{align*}
        \mathbb{P}\left(\Sigma_{i+1}\right) &= \sum_{w_0 \in \Gamma, w_0 \cdot v \notin \mathcal{N}_{\mathcal{T}_{m,n}, L}(\mathcal{T}^{(0)})}\mathbb{P}\left(\tau_{i+1} < \infty \text{ and } S_1 \cdots S_{\tau_{i+1}} = w_0\right)\\ &  \ \ \ \ \ \ \ \ \ \ \ \ \ \ \ \ \ \ \ \ \ \ \ \ \ \ \ \ \ \ \ \ \ \ \ \ \ \ \cdot \mathbb{P}\left( \exists \sigma' > 0, w_0 S_{1} \cdots S_{\sigma'} \cdot v \in \mathcal{T}^{(0)} \right) \\
        &\leq (1-q)\sum_{w_0 \in \Gamma, w_0 \cdot v \notin \mathcal{N}_{\mathcal{T}_{m,n}, L}(\mathcal{T}^{(0)})}\mathbb{P}\left(\tau_{i+1} < \infty \text{ and } S_1 \cdots S_{\tau_{i+1}} = w_0\right)\\ &\text{ (by Lemma~\ref{lem: borel cantelli}) } \\
        &= (1-q)\mathbb{P}\left(\tau_{i+1} < \infty\right)\\
        &\leq (1-q)\mathbb{P}(\Sigma_i).
    \end{align*}
     A straightforward induction thus implies that $\mathbb{P}\left(\Sigma_i\right) \leq (1-q)^{i-1}$ for every $i \in \mathbb{N}^*$, which proves that $\mathbb{P}\left(\bigcap_{i \in \mathbb{N}^*}\Sigma_i\right) = 0$.
    \end{proof}
    
Now we come back to preactions. The previous result has the following translation in the setting of Theorem~\ref{mixing}:
	
	\begin{corollary}\label{convloinquotient}
Let $\alpha$ be a transitive and non-saturated preaction on a pointed countable set $(X,x)$ whose $(m,n)$-graph is finite and let $\beta$ be its maximal forest saturation action (defined on a pointed countable set $(X',x)$ that contains $X$). Let us assume that \begin{itemize}
			\item either $|m|=|n|$; or
			\item $\alpha$ has infinite phenotype.
		\end{itemize}
		Then, the sequence $p_{\beta}(x \cdot G_k)$ converges almost surely to a random end of $\mathcal{G}_{\beta}$.  
	\end{corollary}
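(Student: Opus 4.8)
The plan is to deduce the statement directly from Proposition~\ref{convloin} and Proposition~\ref{homeo}: the former controls the random walk on the Bass--Serre tree, while the latter lets us compare $\mathcal{T}_{m,n}$ with its image in $\mathcal{G}$. First I would reduce the convergence in $\mathcal{G}$ to the already-established convergence in $\mathcal{T}_{m,n}$. Taking $v = \langle b \rangle$ and identifying $X' = \Lambda \backslash \Gamma$, the commutative diagram of Subsection~\ref{projection} gives $p(x \cdot S_k) = (\Lambda S_k)\langle b \rangle = \pi(S_k \cdot v)$, where $\pi : \mathcal{T}_{m,n} \to \mathcal{G}$ is the projection (this is exactly the identity used in the proof of Proposition~\ref{convloin}, taken with $w = 1$). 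Hence it suffices to understand the image under $\pi$ of the tree-valued random walk. By Proposition~\ref{convloin}, almost surely $S_k \cdot v$ converges to a random end $\xi \in \partial \mathcal{T}_{m,n} \smallsetminus \partial \mathcal{T}_{m,n}^{\Lambda}$, where $\mathcal{T}_{m,n}^{\Lambda} = \pi^{-1}(K)$.

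Next I would work on the almost-sure event $\{\xi \notin \partial \mathcal{T}_{m,n}^{\Lambda}\}$. Since $\mathcal{T}_{m,n}^{\Lambda}$ is a subtree and $\xi$ is not one of its ends, the geodesic ray from $v$ to $\xi$ exits $\mathcal{T}_{m,n}^{\Lambda}$ through a well-defined edge $e$ (with source in $\mathcal{T}_{m,n}^{\Lambda}$, target outside) and never returns; beyond $e$ it is contained in the half-tree $\widehat{\mathcal{T}}$ of $e$ lying in $\mathcal{T}_{m,n} \smallsetminus \mathcal{T}_{m,n}^{\Lambda}$. This $\widehat{\mathcal{T}}$ is a connected component of $\pi^{-1}(\widehat{\mathcal{T}}_E)$, where $E = \pi(e)$ is an edge with source in $K$ and target in the forest $\mathcal{F} = \mathcal{G} \smallsetminus K$, and $\widehat{\mathcal{T}}_E \subseteq \mathcal{F}$ is its half-tree. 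The hypothesis of the corollary --- either $|m|=|n|$, or $\alpha$ has infinite phenotype, which by the definition of the phenotype means exactly that the $\langle b \rangle$-orbits of $\alpha$ are infinite --- is precisely what is needed to invoke Proposition~\ref{homeo}, which then yields that $\pi$ restricts to a homeomorphism $\widehat{\mathcal{T}} \xrightarrow{\sim} \widehat{\mathcal{T}}_E$.

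Finally I would transfer the convergence through this homeomorphism. Convergence $S_k \cdot v \to \xi$ in $\mathcal{T}_{m,n}$ means that the geodesics $[v, S_k \cdot v]$ share arbitrarily long initial segments with $[v, \xi)$ as $k \to \infty$; choosing such a segment long enough to cross $e$, one sees that $S_k \cdot v \in \widehat{\mathcal{T}}$ for all large $k$. A homeomorphism between half-trees extends to a homeomorphism of their end-compactifications and sends ends to ends, so $\pi|_{\widehat{\mathcal{T}}}$ carries the convergent sequence $(S_k \cdot v)$ to a sequence $\pi(S_k \cdot v) = p(x \cdot S_k)$ converging to the end $\pi|_{\widehat{\mathcal{T}}}(\xi)$ of $\widehat{\mathcal{T}}_E \subseteq \mathcal{G}$. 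As $\xi$ is random, this limit is a random end of $\mathcal{G}$, which proves the corollary.

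The genuinely delicate content is not in this deduction --- which is essentially bookkeeping once Propositions~\ref{convloin} and~\ref{homeo} are available --- but lies upstream in the assertion $\xi \notin \partial \mathcal{T}_{m,n}^{\Lambda}$ of Proposition~\ref{convloin}. Within the present argument the only points requiring care are translating ``$S_k \cdot v \to \xi$'' into ``$S_k \cdot v$ eventually lies in the half-tree $\widehat{\mathcal{T}}$ determined by $\xi$ and $e$'', and checking that the identification of hypotheses (infinite phenotype $\Leftrightarrow$ infinite $\langle b \rangle$-orbits) indeed licenses the use of Proposition~\ref{homeo}.
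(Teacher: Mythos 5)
Your proposal is correct and follows essentially the same route as the paper: apply Proposition~\ref{convloin} to get almost sure convergence of $S_k \cdot v$ to an end $\xi \notin \partial \mathcal{T}_{m,n}^{\Lambda}$, pick the exit edge $e$ whose half-tree contains $\xi$ and eventually the walk, and push the convergence through the homeomorphism of Proposition~\ref{homeo} via the identity $p(x \cdot S_k) = \pi(S_k \cdot v)$. Your explicit check that infinite phenotype is equivalent to infinite $\langle b \rangle$-orbits (so that Proposition~\ref{homeo} applies) is a detail the paper leaves implicit, but the argument is the same.
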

	
	\begin{proof}
  Let $\Lambda = \Stab_{\alpha}(x)$ and let $\pi : \mathcal{T}_{m,n} \to \Lambda \backslash \mathcal{T}_{m,n} = \mathcal{G}_{\beta}$ be the projection. Let $\mathcal{T}^{(0)} = \pi^{-1}(\mathcal{G}_{\alpha})$. By Remark~\ref{properinvsubtree}, $\mathcal{T}^{(0)}$ is a proper $\Lambda$-invariant subtree of $\mathcal{T}_{m,n}$ on which $\Lambda$ acts cocompactly (the quotient graph $\Lambda \backslash \mathcal{T}^{(0)}$ being $\mathcal{G}_{\alpha}$).
		So by Proposition \ref{convloin}, the sequence $\left(G_k \cdot v\right)_{k \in \mathbb{N}}$ converges almost surely to a random end $\xi \in \partial \mathcal{T}_{m,n} \smallsetminus \partial \mathcal{T}^{(0)}$. Let $e \in \mathcal{E}(\mathcal{T}_{m,n})$ such that $\src(e) \in \mathcal{T}^{(0)}$, $\trg(e) \notin \mathcal{T}^{(0)}$ and $\xi$ belongs to the half-tree $\mathcal{T}_e$ of $e$. For $k$ large enough, the sequence $G_k \cdot v$ remains in $\mathcal{T}_e$. Furthermore, $\pi$ induces a graph isomorphism between $\mathcal{T}_e$ and $\pi(\mathcal{T}_e)$ by Proposition~\ref{homeo} applied to $f = \pi(e)$. Thus, the sequence
        \begin{align*}
            p_{\beta}(x \cdot G_k) &= (\Lambda G_k)\langle b \rangle \\
            &= \Lambda (G_k \langle b \rangle) \\
            &=\pi(G_k \cdot v)
        \end{align*}
        converges almost surely to the random end $\pi\left(\xi\right)$ in $\pi\left(\mathcal{T}_e\right)$.
	\end{proof}

\subsubsection{Pasting preactions}\label{subsec: paste}
    
	The proof of Theorem \ref{mixing} now relies on the following key deterministic result:

    \begin{lemma}\label{merge}
        Let $\alpha_1$ and $\alpha_2$ be transitive non-saturated preactions on pointed countable sets $(X_i, x_i)$ ($i \in \{1,2\}$) whose $(m,n)$-graphs share the same phenotype $P$. Assume that this phenotype is infinite if $|m| \neq |n|$. For $i \in \{1,2\}$, let $\beta_i$ be the maximal forest saturation action of $\alpha_i$ (defined on a pointed coutable set $(Y_i,x_i)$ that contains $X_i$).

Let us consider reduced words $g_1,g_2,g_3$ such that: \begin{enumerate}
			\item for every subword $w$ of $g_2g_3$, one has $p_{\beta_1}(x_1 \cdot g_1w) \notin \mathcal{G}_{\alpha_1}$;
			\item for every subword $w$ of $g_2^{-1}g_1^{-1}$, one has $p_{\beta_2}(x_2 \cdot g_3^{-1}w) \notin \mathcal{G}_{\alpha_2}$;
			\item one has \begin{align*}d_{\mathcal{G}_{\beta_1}}(p_{\beta_1}(x_1 \cdot g_1), p_{\beta_1}(x_1 \cdot g_1g_2)) &\geq d_{\mathcal{G}_{\beta_1}}\left(\mathcal{G}_{\alpha_1}, p_{\beta_1}(x_1 \cdot g_1)\right)\\ &+d_{\mathcal{G}_{\beta_2}}\left(\mathcal{G}_{\alpha_2}, p_{\beta_2}\left(x_2 \cdot g_3^{-1}\right)\right)+2.\end{align*} 
		\end{enumerate}
		
		Then, there exists an action $\alpha$, whose $(m,n)$-graph is infinite, that extends both $\alpha_1$ and $\alpha_2$, and such that $x_1 \cdot g_1g_2g_3 = x_2$.   
    \end{lemma}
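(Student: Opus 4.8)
The plan is to paste the two preactions along a finite ``bridge'' tracing the word $s_1s_2s_3$, and then saturate. Concretely, I would first construct a finite transitive preaction $\alpha_0$ whose $(m,n)$-graph is the union of $K_1$, a finite path issued from $K_1$, and $K_2$, arranged so that the vertex reached from $x_1$ by reading $s_1s_2s_3$ is exactly $x_2$. The desired $\alpha$ is then the maximal forest saturation of $\alpha_0$ given by Lemma~\ref{maxforest}: it is a genuine transitive action, its $(m,n)$-graph is infinite because it contains the infinite forest hanging off $\mathcal{G}_{\alpha_0}$, and since the saturation leaves $\mathcal{G}_{\alpha_0}$ (hence $K_1$ and $K_2$) untouched, it extends $\alpha_1$ and $\alpha_2$ while realizing $x_1\cdot s_1s_2s_3 = x_2$.

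To build $\alpha_0$ I would read $s_1s_2s_3$ inside the saturated action $\beta_1$ starting at $x_1$. Hypothesis (1), applied to every prefix $w$ of $s_2s_3$ (including $w=\emptyset$), says that $p_1(x_1\cdot s_1 w)\notin K_1$, so the edge-path derived from $(s_1s_2s_3,x_1)$ leaves $K_1$ after reading $s_1$ and then remains in a single half-tree of the forest $\mathcal{F}_1 = \mathcal{G}_1\smallsetminus K_1$. Symmetrically, hypothesis (2) forces the path derived from $(s_3^{-1}s_2^{-1}s_1^{-1}, x_2)$ in $\beta_2$ to leave $K_2$ after $s_3^{-1}$ and stay in a half-tree of $\mathcal{F}_2$. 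Because $K_1$ and $K_2$ share the phenotype $P$, which is infinite when $|m|\neq|n|$, Proposition~\ref{homeo} applies on both sides: the relevant half-trees are each isomorphic, \emph{as labeled graphs}, to a half-tree of $\mathcal{T}_{m,n}$, and Remark~\ref{labelunimod} together with Lemma~\ref{maxforest} forces all forest labels to be the common value determined by $P$. Hence the half-tree entered on the $x_1$ side and the one entered on the $x_2$ side are isomorphic labeled half-trees, and the germ of the path traced by $s_2$ is literally the same path of $\mathcal{T}_{m,n}$ read on either side.

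The role of hypothesis (3) is to provide room to splice these germs cleanly. Writing $d_1 = d_{\mathcal{G}_1}(K_1, p_1(x_1\cdot s_1))$ and $d_2 = d_{\mathcal{G}_2}(K_2, p_2(x_2\cdot s_3^{-1}))$, the inequality $d_{\mathcal{G}_1}(p_1(x_1\cdot s_1), p_1(x_1\cdot s_1 s_2)) \geq d_1 + d_2 + 2$ guarantees that the portion of the bridge traced by $s_2$ is long enough to join the exit germ of $K_1$ (reaching distance $d_1$) to the exit germ of $K_2$ (reaching distance $d_2$) while leaving at least two intermediate edges belonging to neither forest. This buffer keeps $K_1$ and $K_2$ disjoint inside $\mathcal{G}_{\alpha_0}$ and makes the identification $x_1\cdot s_1s_2s_3 = x_2$ unambiguous: along the free edges I splice the two half-tree germs, assigning to the bridge vertices the forced forest labels, and declare $x_1\cdot s_1s_2 = x_2\cdot s_3^{-1}$. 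The resulting $\mathcal{G}_{\alpha_0}$ is then connected, contains $K_1$ and $K_2$ as the subgraphs induced on $X_1$ and $X_2$, and conditions (1)--(2) ensure the bridge never re-enters $K_1$ or $K_2$, so no edge of $\alpha_1$ or $\alpha_2$ is overwritten.

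I expect the main obstacle to be the verification, in this last step, that the spliced bridge is an admissible $(m,n)$-graph, namely that the labels produced from the $x_1$ side and the $x_2$ side agree at the splice and satisfy the arithmetic constraints: the bound on incoming/outgoing edges and the Transfer Equation~\eqref{transferequation} across every bridge edge. This is exactly where the equality of phenotypes and the hypothesis ``$P$ infinite when $|m|\neq|n|$'' are indispensable, through Proposition~\ref{homeo}; without them the forest labels on the two sides need not coincide and the gluing could violate~\eqref{transferequation}. Once admissibility is checked, the conclusion that $\alpha$ has infinite $(m,n)$-graph and extends both $\alpha_1$ and $\alpha_2$ follows directly from the properties of the maximal forest saturation recorded in Lemma~\ref{maxforest}.
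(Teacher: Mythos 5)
Your plan is essentially the paper's proof: build a finite bridging preaction consisting of $K_1$ plus the tail of $s_1$'s path, a fresh middle segment of $s_2$, and the tail of $s_3^{-1}$'s path plus $K_2$, using Proposition~\ref{homeo} and Remark~\ref{labelunimod} to force the constant forest label on both sides, hypothesis (3) to guarantee at least two genuinely new edges in the middle of $s_2$, and then Lemma~\ref{maxforest} to saturate into an infinite $(m,n)$-graph. The admissibility check you flag as the main obstacle is resolved exactly as you anticipate: the paper writes $s_2=uu''v'$ with $u$ retracing part of the $s_1$-tail (of height at most $d_{\mathcal{G}_1}(K_1,p_1(x_1\cdot s_1))$), $v'$ tracing part of the $s_3^{-1}$-tail, and $\mathfrak{h}(u'')\geq 2$, then realizes the middle of $u''$ by an explicit edge path all of whose $\langle b\rangle$-orbits have the common cardinality of Remark~\ref{labelunimod}, so the Transfer Equation and the edge-count constraints hold at every splice.
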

	
	\begin{proof}
		
		Let us define the subset \[X_1' = X_1 \cup \{p_{\beta_1}^{-1}(p_{\beta_1}\left(x_1 \cdot w)\right) \mid \text{$w$ subword of $g_1$}\}\] of $Y_1$ and the subset \[X_2' = X_2 \cup \{p_{\beta_2}^{-1}\left(p_{\beta_2}(x_2 \cdot w)\right) \mid \text{$w$ subword of $g_3^{-1}$}\}\] of $Y_2$. For $i \in \{1,2\}$, let us denote by $\beta_i'$ the restriction of $\beta_i$ defined on $X_i'$.
		
		As $P = \infty$ or $|m|=|n|$, the following arguments will take place in a half-tree of $\mathcal{G}_{\alpha_i}^c$ (for $i \in \{1,2\}$), which is isomorphic to a half-tree of $\mathcal{T}_{m,n}$ by Proposition \ref{homeo}; in particular, an edge path of $\mathcal{G}_{\alpha_i}^c$ that derives from a vertex of $\mathcal{G}_{\alpha_i}^c$ and a reduced word $\gamma$ is reduced, and its length is the height $\mathfrak{h}(\gamma)$ of $\gamma$.
        
        Especially, the edge path deriving from $(g_2,x_1 \cdot g_1)$ is reduced in $\mathcal{G}_{\alpha_1}^c$. Thus, one can write the normal form of $g_2$ as $g_2 = uv$, where all the subwords of $u$ are defined on $x_1 \cdot g_1$ and no nonempty subword of $v$ is defined on $x_1 \cdot g_1u$ (for the preaction $\beta_1'$). One has:
        \begin{align*}
			\mathfrak{h}(u)&= d_{\mathcal{G}_{\beta_1}}(p_1(x_1 \cdot g_1),p_1(x_1 \cdot g_1u)) \\ &\leq d_{\mathcal{G}_{\beta_1}}(p_1(x_1 \cdot g_1),\mathcal{G}_{\alpha_1}).
		\end{align*}
		Likewise, one can write the reduced normal form of $g_2$ as $g_2 = u'v'$, where all subwords of $v'^{-1}$ are defined on $x_2 \cdot g_3^{-1}$ and no nonempty subword of $u'^{-1}$ is defined on $x_2 \cdot g_3^{-1} v'^{-1}$ (for the preaction $\beta_2')$. One has
        \begin{align*}
			\mathfrak{h}(v')&= d_{\mathcal{G}_{\beta_2}}\left(p_{\beta_2}\left(x_2 \cdot g_3^{-1}\right),p_{\beta_2}\left(x_2 \cdot g_3^{-1}v'^{-1}\right)\right)  \\ &\leq d_{\mathcal{G}_{\beta_2}}\left(p_{\beta_2}\left(x_2 \cdot g_3^{-1}\right), \mathcal{G}_{\alpha_2}\right). 
		\end{align*}
		Thus, the third assumption implies that \begin{align*}\mathfrak{h}(g_2)&=d_{\mathcal{G}_{\beta_1}}(p_{\beta_1}(x_1 \cdot g_1),p_{\beta_1}(x_1 \cdot g_1g_2))\\& \geq \mathfrak{h}(u)+\mathfrak{h}(v')+2,\end{align*} so the initial subword $u$ of $g_2$ is in fact an initial subword of $u'$, and one can write the normal form of $g_2$ as $g_2 = u u'' v'$, where $\mathfrak{h}(u'') \geq 2$.

        Let us write $u'' = t^{\varepsilon}\mathfrak{m}t^{\eta}$ for some reduced word $\mathfrak{m}$, where \begin{itemize}
			\item $\varepsilon, \eta \in \{1,-1\}$;
			\item $t^{\varepsilon}$ is not defined on $x_1 \cdot g_1u$ (for the preaction $\beta_1'$);
			\item $t^{-\eta}$ is not defined on $x_2 \cdot g_3^{-1}v'^{-1}$ (for the preaction $\beta_2'$).
		\end{itemize} 
		
		Let us denote by $b^{n_1}t^{\varepsilon_1}\cdots b^{n_r}t^{\varepsilon_r}b^{n_{r+1}}$ the reduced form of $\mathfrak{m}$. By induction on $r$, we build a preaction $\gamma$ defined on a countable set $S$ \begin{itemize}
			\item whose $(m,n)$-graph is an edge path $e_1,\ldots ,e_{r}$, such that the orientation of the edge $e_i$ is the sign of $\varepsilon_i$;
			\item all of whose $\langle b \rangle$-orbits share the same cardinal $C$, which is the common label of the vertices of $\mathcal{G}_{\alpha_i}^c$ for $i\in \{1,2\}$ by Remark \ref{labelunimod} (and which is infinite if $P$ is);
			\item such that there exist $y_1,y_2 \in S$ such that \begin{itemize} 
				\item $t^{-\varepsilon}$ is not defined on $y_1$;
				\item $t^{\eta}$ is not defined on $y_2$;
				\item $y_2 = y_1 \cdot \mathfrak{m}$.
			\end{itemize}
		\end{itemize}
		Finally, we merge the preactions $\beta_1'$, $\gamma$ and $\beta_2'$ into a single preaction defined on $X_1' \sqcup S \sqcup X_2'$ by defining \begin{itemize}
			\item $x_1 \cdot g_1u \cdot t^{\varepsilon} = y_1$;
			\item $y_2 \cdot t^{\eta} = x_2 \cdot g_3^{-1}v'^{-1}$.
		\end{itemize}
        An illustration of this construction on the level of $(m,n)$-graphs is provided in Figure \ref{illustration}.
         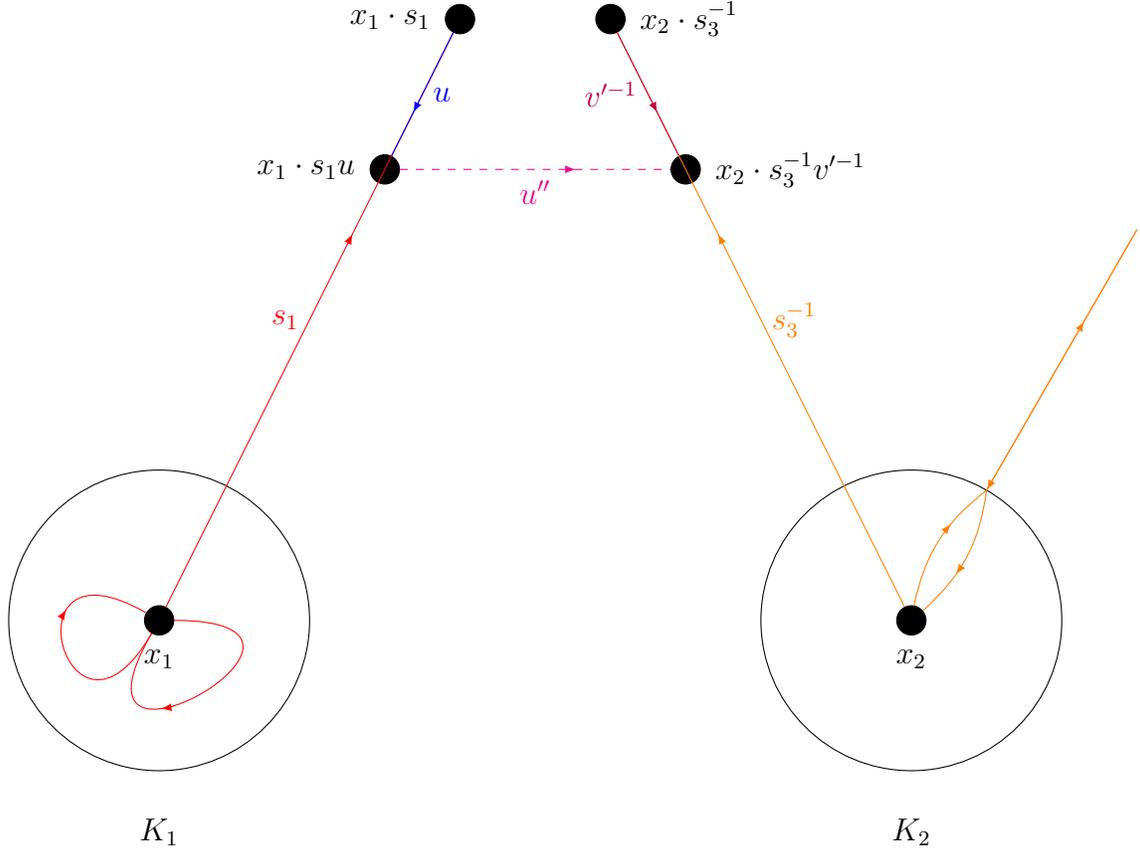
\begin{figure}[ht]
        \begin{tikzpicture}
\draw (0,0) circle (2) ;
\draw (8,0) circle (2) ;
\node[draw, circle, fill] (x1) at (0,0) {} ;
\draw (0,-0.25) node[below]{$x_1$} ;
\node[draw, circle, fill] (x2) at (8,0) {} ;
\draw (8,-0.25) node[below]{$x_2$} ;
\node[draw, circle, fill] (x1s1) at (3,7) {} ;
\draw (2.75,7) node[left]{$x_1 \cdot g_1$} ;
\node[draw, circle, fill] (x1s1u) at (2,4.67) {} ;
\draw (1.75,4.67) node[left]{$x_1 \cdot g_1u$} ;
\node[draw, circle, fill] (x2s3-1) at (5,7){} ;
\draw (5.25,7) node[right]{$x_2 \cdot g_3^{-1}$}{} ; 
\node[draw, circle, fill] (x2s3-1v'-1) at (6,4.67){} ;
\draw (6.25,4.67) node[right]{$x_2 \cdot g_3^{-1}v'^{-1}$} ;
\draw (0,-2.5) node[below]{$\mathcal{G}_{\alpha_1}$} ;
\draw (8,-2.5) node[below]{$\mathcal{G}_{\alpha_2}$} ;
%\draw[>=latex, directed] (a) -- (b) node[midway, below]{$e$} node[very near start, below]{$k$} node[very near end, below]{$l$};
\draw[>=latex, red, directed] (x1) -- (x1s1) node[midway, left]{$g_1$} ;
\draw[>=latex, blue, directed] (x1s1) -- (x1s1u) node[midway, right]{$u$} ;
\draw[>=latex, orange, directed] (x2) -- (x2s3-1) node[midway, right]{$g_3^{-1}$} ;
\draw[>=latex, purple, directed] (x2s3-1) -- (x2s3-1v'-1) node[midway, left]{$v'^{-1}$} ;
\draw[>=latex, magenta, dashed, directed] (x1s1u) -- (x2s3-1v'-1) node[midway, below]{$u''$} ;
\draw[>=latex, red, directed] (x1) to[out=240, in=150, looseness=20] (x1) ;
\draw[>=latex, red, directed] (x1) to[out=0, in=240, looseness=20] (x1) ;
\draw[>=latex, orange, directed] (x2) to[bend left=20] (9, 1.73) ;
\draw[>=latex, orange, directed] (9,1.73) -- (10,3.46) ;
\draw[->,>=latex, orange] (10,3.46) -- (9,1.73) ;
\draw[>=latex, orange, directed] (9,1.73) to[bend left=20] (x2) ;

\end{tikzpicture}
\caption{An illustration of the proof of Lemma \ref{merge}}
\label{illustration}
\end{figure}
        
        For this new preaction $\delta$ we get \begin{align*}
			x_2 \cdot g_3^{-1}v'^{-1} &= y_2 \cdot t^{\eta} \\
			&= y_1 \cdot \mathfrak{m} \cdot t^{\eta} \\
			&= x_1 \cdot g_1u t^{\varepsilon}\mathfrak{m} \cdot t^{\eta} \\
			&= x_1 \cdot g_1u u''
		\end{align*}
		which implies that
		\begin{align*}
			x_2 &= x_1 \cdot g_1u u''v'g_3 \\
			&= x_1 \cdot g_1g_2g_3
		\end{align*}
		and $\delta$ extends both $\alpha_1'$ and $\alpha_2'$. The $(m,n)$-graph of $\delta$ consists of \begin{itemize}
			\item the $(m,n)$-graph of $\beta_1'$;
			\item the $(m,n)$-graph of $\beta_2'$;
			\item the edge path $e_1,\ldots ,e_r$ (with $\src(e_1) \in \mathcal{G}_{\beta_1'}$ and $\trg(e_N) \in \mathcal{G}_{\beta_2'}$), all of whose vertices are labeled $C$.
		\end{itemize}
		Thus, as $|m|, |n|, r \geq 2$, the vertex $\trg(e_1)$ is not saturated. Hence, the maximal forest saturation action $\alpha$ of $\delta$ given by Lemma \ref{maxforest} has an infinite $(m,n)$-graph, thus satisfies the required conditions.
\end{proof}
	We are now ready to prove Theorem \ref{mixing}:
	
	\begin{proof}[Proof of Theorem \ref{mixing}]
		Let $\Lambda_1$ and $\Lambda_2$ be two subgroups of $\Gamma$ whose $(m,n)$-graphs are infinite. Let us assume that \begin{itemize}
			\item either $\bm{\Ph}_{m,n}(\Lambda_i) = \infty$ for $i \in \{1,2\}$; or
			\item $|m|=|n|$ and $\bm{\Ph}_{m,n}(\Lambda_1) = \bm{\Ph}_{m,n}(\Lambda_2)$,
		\end{itemize}
        and let us denote by $P$ the common phenotype of $\Lambda_1$ and $\Lambda_2$.
        
		For $i \in \{1,2\}$, let us denote by $\gamma_i$ the associated transitive right action on a pointed countable set $(X_i, x_i)$. Let us fix $R>0$ and let $\mathcal{F}_i$ be the $R$-ball of $\mathcal{G}_{\gamma_i}$ around $p_{\gamma_i}(x_i)$. We denote by
        \begin{align*}U_i^{(R)} = \big\{&\gamma \text{ transitive and saturated preaction on a pointed countable set } (W_{\gamma},w_{\gamma}) \mid \\& \Stab_{\gamma}(w_{\gamma}) \in \mathcal{K}(\Gamma) \text{ and } \left(p_{\gamma}^{-1}\left(B_{\mathcal{G}_{\gamma}}(p_{\gamma}(w_{\gamma}),R)\right), w_{\gamma}\right) \simeq \left(p_{\gamma_i}^{-1}(\mathcal{F}_i), x_i\right) \\& \text{ (as pointed labeled graphs)}\big\}.\end{align*} Identifying subgroups with pointed, transitive and saturated preactions, we recall that the sets $\left(U_i^{(R)}\right)_{R > 0}$ form a basis of neighborhoods of $\Lambda_i$ for the topology $\mathcal{T}_{sat}$ on the set of pointed transitive actions defined in Subsection \ref{projection} induced on $\mathcal{K}(\Gamma)$. Moreover, $U_i^{(R)}$ is included in $\bm{\Ph_{m,n}}^{-1}(P)$. 
        
        Let $\alpha_i$ be the subpreaction of $\gamma_i$ defined on a subset $X_i' \subseteq X_i$ containing $x_i$ and whose $(m,n)$-graph is $\mathcal{F}_i$. For $i \in \{1,2\}$, let $\beta_i$ be the maximal forest saturation action of $\alpha_i$ given by Lemma \ref{maxforest} (defined on a countable set $Y_i$ that contains $X_i'$). 
		
		Let us fix $\varepsilon > 0$. Using the fact that $\Supp(\mu)$ is bounded, let us denote by $M := \max_{\gamma \in \Supp(\mu)}\mathfrak{h}(\gamma)$. By Corollary \ref{convloinquotient} applied to $\beta_1$ and a sequence $(S_k)_{k \in \mathbb{N}}$ of independently $\mu$-distributed random variables on the one hand, and to $\beta_2$ and a sequence $\left(S'_k\right)_{k \in \mathbb{N}}$ of independently $\mu^{-1}$-distributed random variables (legit by Remark~\ref{rem: reversed}), denoting by $(G_k)_{k \in \mathbb{N}} = (S_1 \cdots S_k)_{k \in \mathbb{N}}$ and $(G_k')_{k \in \mathbb{N}} = \left(S_1' \cdots S_k'\right)_{k \in \mathbb{N}}$, we get that \begin{itemize}
		    \item there exists some $k_1 \in \mathbb{N}$ such that 
            \begin{equation}\label{k1m}\mathbb{P}\left(p_{\beta_1}(x_1 \cdot G_k) \notin \mathcal{N}_{\mathcal{G}_{\beta_1}, M}(\mathcal{F}_1), \ \forall k \geq k_1\right) \geq 1 - \varepsilon;\end{equation}

            \item there exists some $k_2 \in \mathbb{N}$ such that \begin{equation}\label{k2m}\mathbb{P}\left(p_{\beta_2}\left(x_2 \cdot G_k'\right) \notin \mathcal{N}_{\mathcal{G}_{\beta_2}, M}(\mathcal{F}_2), \ \forall k \geq k_2\right) \geq 1 - \varepsilon.\end{equation}
		\end{itemize}

         The choice of $M$ implies that 
\begin{align}\label{eq1}&\mathbb{P}\left(p_{\beta_1}(x_1 \cdot G_{k_1}U) \notin \mathcal{F}_{1} \text{ \ for every subword $U$ of $S_{k_1+1}\cdots S_k$ and every $k \geq k_1$}\right) \nonumber \\&\geq \mathbb{P}\left(p_{\beta_1}(x_1 \cdot G_k) \notin \mathcal{N}_{\mathcal{G}_{\beta_1}, M}(\mathcal{F}_1), \ \forall k \geq k_1\right) \nonumber \\ &\geq 1- \varepsilon \text{ by \eqref{k1m}}\end{align}
		and
		\begin{align}\label{eq2}&\mathbb{P}\left(p_{\beta_2}\left(x_2 \cdot G'_{k_2}U\right) \notin \mathcal{F}_{2} \text{ \ for every subword $U$ of $S_{k_2+1}'\cdots S_k'$ and every $k \geq k_2$}\right) \nonumber \\&\geq \mathbb{P}\left(p_{\beta_2}\left(x_2 \cdot G_k'\right) \notin \mathcal{N}_{\mathcal{G}_{\beta_2}, M}(\mathcal{F}_2), \ \forall k \geq k_2\right) \nonumber \\
        &\geq 1-\varepsilon \text{ by \eqref{k2m}.}\end{align}
 We draw the attention of the reader to the fact that $S_i, S_i'$'s may not be one of the standard generators $b,b^{-1},t,t^{-1}$, and that the term \textit{subword} has to be understood in the sense of Section \ref{normalform}, \textit{i.e.} with respect to the standard generators.

	Let us fix some $M'$ such that, for $i \in \{1,2\}$: \[M' > \max_{\left(h_1,\ldots ,h_{k_i}\right) \in \Supp(\mu)^{k_i}}\mathfrak{h}(h_1\cdots h_{k_i}).\]

    For $i \in \{1,2\}$, let us also define \[C_i = \max_{\left(h_1,\ldots, h_{k_i}\right) \in \Supp(\mu)^{k_i}}d_{\mathcal{G}_{\beta_i}}\left(p_{\beta_i}\left(x_i \cdot h_1\cdots h_{k_i}\right), \mathcal{F}_i\right).\] 
    By Corollary~\ref{convloinquotient}, there exists $k_0 > k_1+k_2$ such that, for any $k \geq k_0$: \begin{equation}\label{eq3}
        \mathbb{P}\left(d_{\mathcal{G}_{\beta_1}}\left(p_1(x_1), p_1\left(x_1 \cdot S_1\cdots S_{k-k_2}\right)\right) \geq C_1+C_2+2M'+2\right) \geq 1 - \varepsilon.
    \end{equation}

    Now we fix $k \geq k_0$ and, for every $i \in \llbracket 1, k \rrbracket$, we let $S_i' = S_{k-i+1}^{-1}$. Let us introduce the events 
        \[A := \left\{p_{\beta_1}(x_1 \cdot S_1\cdots S_{k_1}U) \notin \mathcal{F}_{1} \text{ \ for every subword $U$ of $S_{k_1+1}\cdots S_k$}\right\};\]
        \[B := \left\{p_{\beta_2}(x_2 \cdot S_1'\cdots S_{k_2}'U) \notin \mathcal{F}_{2} \text{ \ for every subword $U$ of $S_{k_2+1}'\cdots S_k'$}\right\};\]
        \[C := \left\{d_{\mathcal{G}_{\beta_1}}\left(p_1(x_1), p_1\left(x_1 \cdot S_1\cdots S_{k-k_2}\right)\right) \geq C_1+C_2+2M'+2\right\}.\]
    By Equations \eqref{eq1}, \eqref{eq2} and \eqref{eq3}, we get \begin{align*}\label{conditions}
			\mathbb{P}\left(A \cap B \cap C\right) \geq 1-3\varepsilon.
		\end{align*}
    Let $(s_i)_{i \in \mathbb{N}} \in \Supp(\mu)^{\mathbb{N}}$ satisfying these three conditions, \textit{i.e.} \begin{itemize}
			\item $p_{\beta_1}\left(x_1 \cdot s_1\cdots s_{k_1}u\right) \notin \mathcal{F}_{1}$ for every subword $u$ of $s_{k_1+1}\cdots s_k$;
			\item $p_{\beta_2}\left(x_2 \cdot s_k^{-1}\cdots s_{k-k_2+1}^{-1}v\right) \notin \mathcal{F}_{2}$ for every subword $v$ of $s_{k-k_2}^{-1}\cdots s_1^{-1}$;
			\item $d_{\mathcal{G}_{\beta_1}}\left(p_{\beta_1}(x_1), p_{\beta_1}\left(x_1 \cdot s_1\cdots s_{k-k_2}\right)\right) \geq C_1+C_2+2M'+2$.
		\end{itemize}
		Then: \begin{align*}
			&d_{\mathcal{G}_{\beta_1}}\left(p_{\beta_1}(x_1 \cdot s_1\cdots s_{k_1}), p_{\beta_1}(x_1 \cdot s_1\cdots s_{k_1} \cdot s_{k_1+1} \cdots s_{k-k_2})\right) \\&\geq d_{\mathcal{G}_{\beta_1}}\left(p_{\beta_1}(x_1),p_{\beta_1}(x_1 \cdot s_1\cdots s_k)\right) - d_{\mathcal{G}_{\beta_1}}\left(p_{\beta_1}(x_1), p_{\beta_1}(x_1 \cdot s_1\cdots s_{k_1})\right) \\& - d_{\mathcal{G}_{\beta_1}}\left(p_{\beta_1}(x_1\cdot s_1\cdots s_{k-k_2}), p_{\beta_1}(x_1 \cdot s_1\cdots s_k)\right) \text{\ (by the triangle inequality)}\\
			&\geq \left(C_1+C_2+2M'+2\right) - M' - M' \\
			&=  C_1+C_2+2 \\
            &\geq d_{\mathcal{G}_{\beta_1}}\left(\mathcal{F}_1,p_{\beta_1}(x_1 \cdot s_1 \cdots s_{k_1})\right) +d_{\mathcal{G}_{\beta_2}}\left(\mathcal{F}_2,p_{\beta_2}\left(x_2 \cdot s_{k}^{-1} \cdots s_{k-k_2+1}^{-1}\right)\right)+2,
		\end{align*}
		which implies that the preactions $\alpha_1, \alpha_2$ and the reduced forms $g_1,g_2,g_3$ of the three elements $s_1 \cdots s_{k_1}$, $s_{k_1+1} \cdots s_{k-k_2}$ and $s_{k-k_2+1} \cdots s_k$ of $\Gamma$ satisfy the assumptions of Lemma \ref{merge}. Thus, by Lemma \ref{merge}, there exists a saturated preaction $\alpha$ defined on a pointed countable set $(X,x_1)$ that contains $X_1'$ and $X_2'$ as disjoint subsets, whose $(m,n)$-graph is infinite, and such that \begin{itemize}
			\item $\alpha$ extends both $\alpha_1'$ and $\alpha_2'$;
			\item $x_1 \cdot s_1\cdots s_k = x_2$.
		\end{itemize}

        We proved that for every $R>0$ and every $\varepsilon > 0$, there exists $k_0 \in \mathbb{N}$ such that, for every $k \geq k_0$ one has \[\mathbb{P}\left(\exists \Lambda \in \mathcal{K}(\Gamma)\cap \bm{\Ph}_{m,n}^{-1}(P): \Lambda \in U_1^{(R)} \cap G_k^{-1}U_2^{(R)}G_k\right) \geq 1-3\varepsilon.\]
        Thus, as the sets $\left(U_i^{(R)}\right)_{R > 0}$ form a basis of neighborhoods of $\Lambda_i$ for $\mathcal{T}_{sat}$, which is finer than the Chabauty topology (\textit{cf.} Remark \ref{topfin}), the conjugation action is topologically $\mu$-mixing on $\mathcal{K}(\Gamma) \cap \bm{\Ph_{m,n}}^{-1}(P)$.\end{proof}

	\bibliographystyle{alpha}
\bibliography{ref}

@misc{osin,
      title={Subgroup mixing and random walks in groups acting on hyperbolic spaces}, 
      author={M. Hull and A. Minasyan and D. Osin},
      year={2024},
      eprint={2407.16069},
      archivePrefix={arXiv},
      primaryClass={math.GR},
      url={https://arxiv.org/abs/2407.16069}, 
note={arXiv:2407.16069}
}

@article{end,
 ISSN = {00029939, 10886826},
 URL = {http://www.jstor.org/stable/2048184},
 abstract = {Let μ be a probability on a free group Γ of rank r ≥ 2. Assume that $\operatorname{Supp}(\mu)$ is not contained in a cyclic subgroup of Γ. We show that if (Xn)n ≥ 0 is the right random walk on Γ determined by μ, then with probability 1, Xn converges (in the natural sense) to an infinite reduced word. The space Ω of infinite reduced words carries a unique probability ν such that (Ω, ν) is a frontier of (Γ, μ) in the sense of Furstenberg [10]. This result extends to the right random walk (Xn) determined by a probability μ on the group G of automorphisms of an arbitrary infinite locally finite tree T. Assuming that $\operatorname{Supp}(\mu)$ is not contained in any amenable closed subgroup of G, then with probability 1 there is an end ω of T such that Xnv converges to ω for each v ∈ T. Our methods are principally drawn from [9] and [10].},
 author = {Donald I. Cartwright and P. M. Soardi},
 journal = {Proceedings of the American Mathematical Society},
 number = {3},
 pages = {817--823},
 publisher = {American Mathematical Society},
 title = {Convergence to Ends for Random Walks on the Automorphism Group of a Tree},
 urldate = {2024-08-28},
 volume = {107},
 year = {1989}
}

@misc{bontemps, 
title={Perfect kernel of generalised {B}aumslag-{S}olitar groups}, 
author={Sasha Bontemps},
year={2024},
eprint={2210.14990},
archivePrefix={arXiv},
primaryClass={math.GR},
note = {arXiv:2411.03221}
}

@misc{solitar,
      title={On the space of subgroups of {B}aumslag-{S}olitar groups {I}: perfect kernel and phenotype}, 
      author={Alessandro Carderi and Damien Gaboriau and François Le Maître and Yves Stalder},
      shorthand={CGLMS22},
      year={2022},
      eprint={2210.14990},
      archivePrefix={arXiv},
      primaryClass={math.GR},
      note = {arXiv:2210.14990}
}

@misc{solitar2,
      title={On the space of subgroups of {B}aumslag-{S}olitar groups {II}: {H}igh transitivity}, 
      author={Damien Gaboriau and François Le Maître and Yves Stalder},
      shorthand={GLMS24},
      year={2024},
      eprint={2410.23224},
      archivePrefix={arXiv},
      primaryClass={math.GR},
      url={https://arxiv.org/abs/2410.23224}, 
note={arXiv:2410.23224}
}

@Book{Kechris,
 Author = {Kechris, Alexander S.},
 Title = {Classical descriptive set theory},
 FSeries = {Graduate Texts in Mathematics},
 Series = {Grad. Texts Math.},
 ISSN = {0072-5285},
 Volume = {156},
 ISBN = {3-540-94374-9},
 Year = {1995},
 Publisher = {Berlin: Springer-Verlag},
 Language = {English},
 Keywords = {03E15,03-01,28A05,03-02,91A44,54H05},
 zbMATH = {722611},
 Zbl = {0819.04002}
}

@Book{serre,
 Author = {Serre, Jean-Pierre},
 Title = {Arbres, amalgames, {{\(\mathrm{SL}_2\)}}. {R{\'e}dig{\'e}} avec la collaboration de {Hyman} {Bass}},
 FSeries = {Ast{\'e}risque},
 Series = {Ast{\'e}risque},
 ISSN = {0303-1179},
 Volume = {46},
 Year = {1977},
 Publisher = {Soci{\'e}t{\'e} Math{\'e}matique de France (SMF), Paris},
 Language = {French},
 Keywords = {20E06,20-02,20G25,20H10,22E40},
 zbMATH = {3575812},
 Zbl = {0369.20013}
}

@article{tworelators,
     author = {Baumslag , Gilbert and Solitar, Donald},
     title = {Some two-generator one-relator non-{H}opfian groups},
     journal = {Bull. Amer. Math. Soc.},
     volume = {68},
     number = {6},
     year = {1962},
     pages = { 199-201},
     language = {en},
     url = {http://dml.mathdoc.fr/item/1183524561}
}

@article{meskin,
 ISSN = {00029947},
 URL = {http://www.jstor.org/stable/1995962},
 abstract = {The study of one-relator groups includes the connections between group properties and the form of the relator. In this paper we discuss conditions on the form $u^{-1}v^1 uv^m$ which force the corresponding one-relator groups to be nonresidually finite, i.e. the intersection of the normal subgroups of finite index to be nontrivial. Moreover we show that these forms can be detected amongst the words of a free group.},
 author = {Stephen Meskin},
 journal = {Transactions of the American Mathematical Society},
 pages = {105--114},
 publisher = {American Mathematical Society},
 title = {Nonresidually Finite One-Relator Groups},
 urldate = {2025-01-06},
 volume = {164},
 year = {1972}
}

@article {whyte,
    AUTHOR = {Whyte, K.},
     TITLE = {The large scale geometry of the higher {B}aumslag-{S}olitar
              groups},
   JOURNAL = {Geom. Funct. Anal.},
  FJOURNAL = {Geometric and Functional Analysis},
    VOLUME = {11},
      YEAR = {2001},
    NUMBER = {6},
     PAGES = {1327--1343},
      ISSN = {1016-443X},
   MRCLASS = {20F65 (20F69)},
  MRNUMBER = {1878322},
MRREVIEWER = {Lee Mosher},
       DOI = {10.1007/s00039-001-8232-6},
       URL = {https://ezproxy-prd.bodleian.ox.ac.uk:2095/10.1007/s00039-001-8232-6},
}

@misc{measureeq,
      title={Measure equivalence classification of {B}aumslag-{S}olitar groups}, 
      author={Damien Gaboriau and Antoine Poulin and Anush Tserunyan and Robin Tucker-Drob and Konrad Wr\'obel},
    note={in preparation}
}

@article{AG,
  TITLE = {{Perfect kernel and dynamics: from Bass-Serre theory to hyperbolic groups}},
  AUTHOR = {Azuelos, P{\'e}n{\'e}lope and Gaboriau, Damien},
  URL = {https://hal.science/hal-04247953},
  JOURNAL = {{Mathematische Annalen}},
  PUBLISHER = {{Springer Verlag}},
  YEAR = {2024},
  KEYWORDS = {Space of subgroups ; perfect kernel ; topologically transitive actions ; hyperbolic groups ; subgroup separable groups ; Bass-Serre theory ; amenable actions},
  PDF = {https://hal.science/hal-04247953v2/file/AG-SubG-Arxiv-v3.pdf},
  HAL_ID = {hal-04247953},
  HAL_VERSION = {v2},
}

@article{farb,
title = "A rigidity theorem for the solvable {B}aumslag-{S}olitar groups",
author = "Benson Farb and Lee Mosher",
year = "1998",
month = feb,
doi = "10.1007/s002220050210",
language = "English (US)",
volume = "131",
pages = "419--451",
journal = "Inventiones Mathematicae",
issn = "0020-9910",
publisher = "Springer New York",
number = "2",

}

@article{rwbsmn,
title = "Random walks on {B}aumslag-{S}olitar groups",
abstract = "We consider random walks on non-amenable Baumslag–Solitar groups BS(p, q) and describe their Poisson–Furstenberg boundary. The latter is a probabilistic model for the long-time behaviour of the random walk. In our situation, we identify it in terms of the space of ends of the Bass–Serre tree and the real line using Kaimanovich{\textquoteright}s strip criterion.",
author = "Johannes Cuno and Ecaterina Sava-Huss",
year = "2018",
month = oct,
doi = "10.1007/s11856-018-1775-0",
language = "English",
volume = "228",
pages = "627--663",
journal = "Israel Journal of Mathematics",
issn = "0021-2172",
publisher = "Springer New York",

}
	
	\bigskip
	{\footnotesize
		
		\noindent
		{\textsc{ENS-Lyon,
				Unité de Mathématiques Pures et Appliquées,  69007 Lyon, France}}
		\par\nopagebreak \texttt{sasha.bontemps@ens-lyon.fr}
	}

\end{document}